\documentclass[11pt, letterpaper]{article}
\usepackage[letterpaper,margin=1.0in]{geometry}

\usepackage[utf8]{inputenc}

\usepackage{amsmath,amsfonts,bm}
\usepackage{amssymb}









\def\eqref#1{equation~(\ref{#1})}









\def\floor#1{\left\lfloor #1 \right\rfloor}
\def\1{\bm{1}}

\newcommand{\Norm}[1]{\left\| #1 \right\|}
\newcommand{\norm}[1]{\left\| #1 \right\|_2}
\def\inner#1#2{\langle #1, #2 \rangle}

\def\eps{{\varepsilon}}

\def\vzero{{\bm{0}}}

\def\vb{{\bm{b}}}

\def\ve{{\bm{e}}}

\def\vu{{\bm{u}}}
\def\vv{{\bm{v}}}

\def\vx{{\bm{x}}}
\def\vy{{\bm{y}}}

\def\gA{{\mathcal{A}}}

\def\gF{{\mathcal{F}}}
\def\gG{{\mathcal{G}}}

\def\gL{{\mathcal{L}}}
\def\gM{{\mathcal{M}}}

\def\gO{{\mathcal{O}}}



\def\sP{{\mathbb{P}}}

\def\sR{{\mathbb{R}}}






\newcommand{\E}{\mathbb{E}}

\newcommand{\Var}{\mathrm{Var}}



\DeclareMathOperator*{\argmin}{arg\,min}

\DeclareMathOperator{\diag}{diag}
\DeclareMathOperator{\spn}{span}
\DeclareMathOperator{\prox}{prox}

\usepackage{amsthm}
\theoremstyle{plain}
\newtheorem{thm}{Theorem}[section]
\newtheorem{defn}[thm]{Definition}

\newtheorem{lemma}[thm]{Lemma}

\newtheorem{remark}[thm]{Remark}
\newtheorem{corollary}[thm]{Corollary}
\newtheorem{prop}[thm]{Proposition}

\usepackage{algorithm}
\usepackage{algorithmic}

\makeatletter
\def\Ddots{\mathinner{\mkern1mu\raise\p@
\vbox{\kern7\p@\hbox{.}}\mkern2mu
\raise4\p@\hbox{.}\mkern2mu\raise7\p@\hbox{.}\mkern1mu}}
\makeatother

\def\pr#1{\sP\left( #1 \right)}

\makeatletter
\newcommand*{\rom}[1]{\expandafter\@slowromancap\romannumeral #1@}
\makeatother
\usepackage{natbib}
\usepackage{caption}
\DeclareMathOperator{\geo}{Geo}

\title{A General Analysis Framework of Lower Complexity Bounds for Finite-Sum Optimization}

\makeatletter
\renewcommand\@date{{%
  \vspace{-\baselineskip}%
  \large\centering
  \begin{tabular}{@{}c@{}}
    Guangzeng Xie \\
    \normalsize smsxgz@pku.edu.cn
  \end{tabular}%
  \quad \quad
  \begin{tabular}{@{}c@{}}
    Luo Luo \\
    \normalsize rickyluoluo@gmail.com
  \end{tabular}
  \quad \quad
  \begin{tabular}{@{}c@{}}
    Zhihua Zhang \\
    \normalsize zhzhang@math.pku.edu.cn
  \end{tabular}

  \bigskip

  School of Mathematical Sciences, Peking University
}}
\makeatother




\newcommand{\B}{\bm{B}}
\newcommand{\A}{\bm{A}}
\newcommand{\I}{\bm{I}}

\usepackage{multirow}

\begin{document}

\maketitle

\begin{abstract}
This paper studies the lower bound complexity for the optimization problem whose objective function is the average of $n$ individual smooth convex functions. We consider the algorithm which gets access to gradient and proximal oracle for each individual component.
For the strongly-convex case, we prove such an algorithm can not reach an $\eps$-suboptimal point in fewer than $\Omega((n+\sqrt{\kappa n})\log(1/\eps))$ iterations, where $\kappa$ is the condition number of the objective function. This lower bound is tighter than previous results and perfectly matches the upper bound of the existing proximal incremental first-order oracle algorithm Point-SAGA.
We develop a novel construction to show the above result, which partitions the tridiagonal matrix of classical examples into $n$ groups. 
This construction is friendly to the analysis of proximal oracle and also could  be used to general convex and average smooth cases naturally.
\end{abstract}

\section{Introduction}
We consider the minimization of the following optimization problem 
\begin{align}
    \min_{\vx\in\sR^d} f(\vx) \triangleq \frac{1}{n}\sum_{i=1}^n f_i(\vx), \label{prob:main}
\end{align}
where the $f_i(\vx)$ are $L$-smooth and $\mu$-strongly convex. 
The condition number is defined as $\kappa=L/\mu$, which  is typically larger than $n$ in real-world applications. Many machine learning models can be formulated as the above problem such as ridge linear regression, ridge logistic regression, smoothed support vector machines, graphical models, etc.
This paper focuses on the first order methods for solving Problem (\ref{prob:main}), which access to the Proximal Incremental First-order Oracle (PIFO) for each individual component, that is,
\begin{align}
    h_f(\vx, i, \gamma) \triangleq \left[ f_i(\vx), \nabla f_i(\vx), \prox^\gamma_{f_i}(\vx) \right],
    \label{oracle}
\end{align}
where $i\in\{1,\dots,n\}$, $\gamma>0$, and the proximal operation is defined as 
\begin{align*}
    \prox^\gamma_{f_i}(\vx) = \argmin_\vu\left\{f_i(\vx) +\frac{1}{2\gamma}\|\vx-\vu\|^2_2\right\}.
\end{align*}
We also define the Incremental First-order Oracle (IFO)
\begin{align*}
    g_f(\vx, i, \gamma) \triangleq \left[ f_i(\vx), \nabla f_i(\vx)\right].
\end{align*}
PIFO provides more information than IFO and it would be potentially more powerful than IFO in first order optimization algorithms. Our goal is to find an $\eps$-suboptimal solution $\hat\vx$ such that \begin{align*}
f(\hat\vx)-\min_{\vx \in \sR^d} f(\vx) \leq \eps
\end{align*}
by using PIFO or IFO.

There are several first-order stochastic algorithms to solve Problem (\ref{prob:main}). The key idea to leverage the structure of $f$ is variance reduction which is effective for ill-conditioned problems. 
For example, SVRG~\citep{zhang2013linear,johnson2013accelerating,DBLP:journals/siamjo/Xiao014} can find an $\eps$-suboptimal solution in $\gO((n+\kappa)\log(1/\eps))$ IFO calls, while the complexity of the classical Nesterov’s acceleration~\citep{nesterov1983method} is $\gO(n\sqrt{\kappa}\log(1/\eps))$.  
Similar results\footnote{
SVRG, SAG and SAGA only need to introduce the proximal operation for composite objective, that is, $f_i(\vx) = g_i(\vx) + h(\vx)$, where $h$ may be non-smooth. 
Their iterations only depend on IFO when all  the $f_i(x)$ are smooth. 
Hence, we regard these algorithms only require IFO calls in this paper.} 
also hold for SAG~\citep{schmidt2017minimizing} and SAGA~\citep{defazio2014saga}.  
In fact, there exists an accelerated stochastic gradient method with $\sqrt{\kappa}$ dependency. \citet{defazio2016simple} introduced a simple and practical accelerated method called Point SAGA, which reduces the iteration complexity to $\gO((n+\sqrt{\kappa n}) \log(1/\eps))$.
The advantage of Point SAGA is in that it has only one parameter to be tuned, but the iteration depends on PIFO rather than IFO. \citet{allen2017katyusha} proposed the Katyusha momentum to accelerate variance reduction algorithms, which achieves the same iteration complexity as Point-SAGA but only requires IFO calls.

The lower bound complexities of IFO algorithms for convex optimization have been well studied \citep{agarwal2015lower,ArjevaniS15Communication,woodworth2016tight,carmon2017lower,lan2017optimal,zhou2019lower}.
\citet{lan2017optimal} showed that at least $\Omega((n {+} \sqrt{\kappa n})\log(1/\eps))$ IFO calls\footnote{
\citeauthor{lan2017optimal}'s construction satisfies $f$ is $\mu$-strongly convex and every $f_i$ is convex, while this paper study the lower bound with stronger condition that is every $f_i$ is $\mu$-strongly convex.
For the same lower bound complexity, the result with stronger assumptions on the objective functions is stronger.} are needed to obtain an $\eps$-suboptimal solution for some complicated objective functions. This lower bound is optimal because it matches the upper bound complexity of Katyusha~\citep{allen2017katyusha}.

It would be interesting  whether we can establish a more efficient PIFO algorithm than IFO one. \citet{woodworth2016tight} provided a lower bound $\Omega(n {+} \sqrt{\kappa n}\log(1/\eps))$  for PIFO algorithms, while the known upper bound of the PIFO algorithm Point SAGA [3] is $\gO((n {+} \sqrt{\kappa n})\log(1/\eps))$. 
The difference of dependency on $n$ implies that the existing theory of PIFO algorithm is not perfect. This gap can not be ignored because the number of components $n$ is typically very large in many machine learning problems. 
A natural question is can we design a PIFO algorithm whose upper bound complexity matches \citeauthor{woodworth2016tight}'s lower bound, or can we improve the lower bound complexity of PIFO to match the upper bound of Point SAGA.

In this paper, we prove the lower bound complexity of PIFO algorithm is $\Omega((n{+} \sqrt{\kappa n}) \log(1/\eps))$ for smooth and strongly-convex $f_i$, which means the existing Point-SAGA~\citep{defazio2016simple} has achieved optimal complexity and PIFO can not lead to a tighter upper bound than IFO. We provide a novel construction, showing the above result by decomposing the classical tridiagonal matrix~\citep{nesterov2013introductory} into $n$ groups. This technique is quite different from the previous lower bound complexity analysis~\citep{agarwal2015lower,woodworth2016tight,lan2017optimal,zhou2019lower}. Moreover, it is very friendly to the analysis of proximal operation and easy to follow. We also use this technique to study general convex and average smooth cases~\citep{allen2018katyushax,zhou2019lower},  obtaining the similar lower bounds to the previous work~\citep{woodworth2016tight,zhou2019lower}. 
In addition, we provide the lower bound complexity of PIFO algorithm for non-convex problem in Appendix \ref{sec:nonconvex} for demonstrating the power of our framework.
And We hope it could be applied in non-smooth problems in future work.


\begin{table}[ht]
\begin{center}
\newcommand{\tabincell}[2]{\begin{tabular}{@{}#1@{}}#2\end{tabular}}
{\small\begin{tabular}{ |c|c|c|c| } 
 \hline
 & Upper Bounds   & Previous Lower Bounds & Our Lower Bounds   \\ \hline
 \tabincell{c}{$f_i$ is $L$-smooth \\ and $\mu$-strongly \\ convex}   
 & \tabincell{c}{\\[-0.15cm]$\gO\left(\left(n + \sqrt{\kappa n}\right)\log(\frac{1}{\eps})\right)$ \\[0.2cm] 
 \citep{allen2017katyusha} \\[0.1cm]  IFO  \\[0.25cm]
 $\gO\left(\left(n + \sqrt{\kappa n}\right)\log(\frac{1}{\eps})\right)$ \\[0.2cm]
 \citep{defazio2016simple} \\[0.1cm]  PIFO \\[0.15cm]}
 & \tabincell{c}{\\[-0.15cm]
$\Omega\left(n + \sqrt{\kappa n}\log(\frac{1}{\eps})\right)$ \\[0.2cm] 
 \!\!\citep{woodworth2016tight}\!\! \\[0.15cm]  PIFO\\[0.2cm]}
 & \tabincell{c}{\\[-0.15cm]$\Omega\left(\left(n + \sqrt{\kappa n}\right)\log(\frac{1}{\eps})\right)$ \\[0.2cm] 
 [Theorem \ref{thm:strongly}] \\[0.1cm]  PIFO\\[0.15cm]} \\\hline
 \tabincell{c}{$f_i$ is $L$-smooth \\  and convex} 
 &  \tabincell{c}{\\[-0.15cm]$\gO\left( n\log(\frac{1}{\eps}) + \sqrt{\frac{nL}{\eps}} \right)$ \\[0.2cm] 
 \citep{allen2017katyusha} \\[0.1cm]  IFO\\[0.15cm]}
 & \tabincell{c}{\\[-0.15cm]$\Omega\left( n + \sqrt{\frac{nL}{\eps}} \right)$ \\[0.2cm] 
 \!\!\citep{woodworth2016tight}\!\!  \\[0.1cm]  PIFO \\[0.15cm]}
 &  \tabincell{c}{\\[-0.15cm]$\Omega\left( n + \sqrt{\frac{nL}{\eps}} \right)$ \\[0.2cm] 
 [Theorem \ref{thm:convex}] \\[0.1cm]  PIFO \\[0.15cm]} \\ \hline
 \tabincell{c}{\\[-0.15cm]\!\!$\{f_i\}_{i=1}^n$ is $L$-average \\ smooth and $f$ is \\ $\mu$-strongly convex\!\! \\[0.2cm]} 
 & \tabincell{c}{\\[-0.3cm]\!\!$\gO\left(\left(n + n^{3/4} \sqrt{\kappa}\right)\log\left( \frac{1}{\eps} \right)\right)$\!\!\\[0.2cm]  
 \citep{allen2018katyushax} \\[0.1cm]  IFO \\[0.15cm]} 
 & \tabincell{c}{\\[-0.15cm]$\Omega\left(n + n^{3/4} \sqrt{\kappa}\log\left( \frac{1}{\eps} \right)\right)$ \\[0.2cm] 
 \citep{zhou2019lower} \\[0.1cm]  IFO\\[0.15cm]}
 & \tabincell{c}{\\[-0.15cm]\!\!$\Omega\left(\left(n + n^{3/4} \sqrt{\kappa}\right)\log\left( \frac{1}{\eps} \right)\right)$\!\! \\[0.2cm] 
 [Theorem \ref{thm:average:strongly}] \\[0.1cm]  PIFO\\[0.15cm]} \\\hline
 \tabincell{c}{\\[-0.3cm]\!\!$\{f_i\}_{i=1}^n$ is $L$-average \\ smooth and $f$ is \\ convex\!\!\\[0.2cm]} 
  & \tabincell{c}{\\[-0.15cm]$\gO\left(n +  n^{3/4} \sqrt{\frac{L}{\eps}} \right)$ \\[0.15cm]
 \citep{allen2018katyushax} \\[0.1cm]  IFO \\[0.2cm]}
  & \tabincell{c}{\\[-0.15cm]$\Omega\left(n +  n^{3/4} \sqrt{\frac{L}{\eps}} \right)$ \\[0.2cm] 
 \citep{zhou2019lower} \\[0.1cm]  IFO\\[0.15cm]}
  & \tabincell{c}{\\[-0.15cm]$\Omega\left(n +  n^{3/4} \sqrt{\frac{L}{\eps}} \right)$ \\[0.15cm] 
 [Theorem \ref{thm:average:convex}] \\[0.1cm]  PIFO\\[0.2cm]} \\\hline
\end{tabular}}
\caption{We compare our PIFO lower bounds with existing results of IFO or PIFO algorithms, where $\kappa=L/\mu$. Note that the call of PIFO could obtain more information than IFO.
Hence, any PIFO lower bound also can be regarded as an IFO lower bound, not vice versa. }\label{table:UL}
\end{center}
\end{table}

\begin{table}[ht]
\begin{center}
\newcommand{\tabincell}[2]{\begin{tabular}{@{}#1@{}}#2\end{tabular}}
{\begin{tabular}{ |c|c|c|c| } 
 \hline
 & Previous Lower Bounds & Our Lower Bounds   \\ \hline
 \tabincell{c}{$f_i$ is $L$-smooth \\ and $\mu$-strongly \\ convex}   
 & \tabincell{c}{\\[-0.15cm] 
 $\#{\rm PIFO}=\Omega\left(n + \sqrt{\kappa n}\log(\frac{1}{\eps})\right)$ \\[0.2cm] 
 $d=\gO\left(\frac{\kappa n}{\eps}\log^5\left(\frac{1}{\eps}\right)\right)$  \\[0.2cm]
 \citep{woodworth2016tight}\\[0.15cm]}
 & \tabincell{c}{\\[-0.15cm]$\#{\rm PIFO}=\Omega\left(\left(n + \sqrt{\kappa n}\right)\log(\frac{1}{\eps})\right)$ \\[0.2cm] 
 $d=\gO\left(\sqrt{\frac{\kappa}{n}}\log\left(\frac{1}{\eps}\right)\right)$ \\[0.2cm]
 [Theorem \ref{thm:strongly}]\\[0.15cm]} \\\hline
 \tabincell{c}{$f_i$ is $L$-smooth \\  and convex} 
 & \tabincell{c}{\\[-0.15cm]$\#{\rm PIFO}=\Omega\left( n + \sqrt{\frac{nL}{\eps}} \right)$ \\[0.2cm] 
 $d=\gO\left(\frac{L^2}{\eps^2}\log\left(\frac{1}{\eps}\right)\right)$  \\[0.2cm]
 \citep{woodworth2016tight}\\[0.15cm]}
 &  \tabincell{c}{\\[-0.15cm]$\#{\rm PIFO}=\Omega\left( n + \sqrt{\frac{nL}{\eps}} \right)$ \\[0.2cm] 
 $d=\gO\left(1+\sqrt{\frac{L}{n\eps}}\right)$ \\[0.2cm]
 [Theorem \ref{thm:convex}] \\[0.15cm]} \\ \hline
 \tabincell{c}{\\[-0.3cm]$\{f_i\}_{i=1}^n$ is $L$-average \\ smooth and $f$ is \\ $\mu$-strongly convex \\[0.15cm]} 
 & \tabincell{c}{\\[-0.15cm]$\#{\rm IFO}=\Omega\left(n + n^{3/4} \sqrt{\kappa}\log\left( \frac{1}{\eps} \right)\right)$ \\[0.2cm] 
 $d=\gO\left(n+n^{3/4}\sqrt{\kappa}\log\left(\frac{1}{\eps}\right)\right)$ \\[0.2cm]
 \citep{zhou2019lower}\\[0.15cm]}
 & \tabincell{c}{\\[-0.15cm]$\#{\rm PIFO}=\Omega\left(\left(n + n^{3/4} \sqrt{\kappa}\right)\log\left( \frac{1}{\eps} \right)\right)$ \\[0.2cm] 
 $d=\gO\left(n^{-1/4}\sqrt{\kappa}\log\left(\frac{1}{\eps}\right)\right)$ \\[0.2cm]
 [Theorem \ref{thm:average:strongly}] \\[0.15cm]} \\\hline
 \tabincell{c}{\\[-0.15cm]$\{f_i\}_{i=1}^n$ is $L$-average \\ smooth and $f$ is \\ convex\\[0.1cm]} 
  & \tabincell{c}{\\[-0.3cm]$\#{\rm IFO}=\Omega\left(n +  n^{3/4} \sqrt{\frac{L}{\eps}} \right)$ \\[0.2cm] 
  $d=\gO\left(n+n^{3/4}\sqrt{\frac{L}{\eps}}\right)$ \\[0.2cm]
 \citep{zhou2019lower} \\[0.15cm]}
  & \tabincell{c}{\\[-0.15cm]$\#{\rm PIFO}=\Omega\left(n +  n^{3/4} \sqrt{\frac{L}{\eps}} \right)$ \\[0.2cm]
  $d=\gO\left(1+n^{-1/4}\sqrt{\frac{L}{\eps}}\right)$ \\[0.2cm]
 [Theorem \ref{thm:average:convex}]\\[0.15cm]} \\\hline
\end{tabular}}
\caption{We compare our PIFO lower bounds with previous results, including the number of PIFO or IFO calls to obtain $\eps$-suboptimal point and the required number of dimensions in corresponding construction.}\label{table:DL}
\end{center}
\end{table}

\section{A General Analysis Framework}\label{sec:pre}

In this paper, we consider the Proximal Incremental First-order Oracle (PIFO) algorithm for smooth convex finite-sum optimization. 
All the omitted proof in this section can be found in Appendix \ref{appendix:sec:pre} and Appendix \ref{sec:geo} for a detailed version.
We analyze the lower bounds of the algorithms when the objective functions are respectively strongly convex, general convex, smooth and average smooth~\citep{zhou2019lower}. 

\begin{defn}
For any differentiable function $f:\sR^m \rightarrow \sR$,
\begin{itemize}
    \item $f$ is convex, if for any $\vx,\vy\in\sR^m$ it satisfies 
    $f(\vy)\geq f(\vx) + \langle \nabla f(\vx), \vy-\vx \rangle$.
    \item $f$ is $\mu$-strongly convex, if for any $\vx,\vy\in\sR^m$ it satisfies \\[0.15cm]
    $f(\vy)\geq f(\vx) + \langle \nabla f(\vx), \vy-\vx \rangle + \dfrac{\mu}{2}\|\vx-\vy\|_2^2$.
    \item $f$ is $L$-smooth, if for any $\vx,\vy\in\sR^m$ it satisfies 
    $\|\nabla f(\vx) - \nabla f(\vy)\|_2 \leq L\| \vx - \vy \|_2$.
\end{itemize}
\end{defn}

\begin{defn}
We say differentiable functions $\{f_i\}_{i=1}^n, ~f_i: \sR^m \rightarrow \sR$ to be $L$-average smooth if for any $\vx, \vy \in \sR^m$, they satisfy  
\begin{align}
    \frac{1}{n} \sum_{i=1}^n \norm{\nabla f_i(\vx) - \nabla f_i(\vy)}^2 \le L^2 \norm{\vx - \vy}^2.
\end{align}
\end{defn}

\begin{remark}\label{remark:smooth}
We point out that 
\begin{enumerate}
    \item if each $f_i$ is $L$-smooth, then we have $\{f_i\}_{i=1}^n$ is $L$-average smooth.
    \item if $\{f_i\}_{i=1}^n$ is $L$-average smooth, then we have $f(\vx) = \frac{1}{n} \sum_{i=1}^n f_i(\vx)$ is $L$-smooth.
\end{enumerate}\hskip1cm
\end{remark}
We present the formal definition for PIFO algorithm.
\begin{defn}
Consider a stochastic optimization algorithm $\gA$ to solve Problem~(\ref{prob:main}).
Let $\vx_t$ be the point obtained at time-step $t$ and the algorithm starts with $\vx_0$.
The algorithm $\gA$ is said to be a PIFO algorithm if for any $t\geq 0$, we have
\begin{align}
    \vx_t \in \spn\big\{\vx_{0},\dots,\vx_{t-1}, \nabla f_{i_1} (\vx_{0}), \cdots, \nabla f_{i_t} (\vx_{t-1}), \prox_{f_{i_1}}^{\gamma_1} (\vx_{0}), \cdots,  \prox_{f_{i_t}}^{\gamma_t} (\vx_{t-1}) \big\},
\end{align}
where $i_t$ is a random variable supported on $[n]$ and takes 
\begin{align}
\sP(i_t = j) = p_j,
\end{align}
for each $t \ge 0$ and $1 \le j \le n$ where $\sum_{j=1}^n p_j = 1$. 
\end{defn}
Without loss of generality, we assume that $\vx_0=\vzero$ and $p_1 \le p_2 \le \cdots \le p_n$ to simplify our analysis. Otherwise, we can take $\{{\hat f}_i(\vx) = f_i(\vx + \vx_0)\}_{i=1}^n$ into consideration. On the other hand, suppose that $p_{s_1} \le p_{s_2} \le \cdots \le p_{s_n}$ where $\{s_i\}_{i=1}^n$ is a permutation of $[n]$. Define $\{{\tilde f}_i\}_{i=1}^n$ such that ${\tilde f}_{s_i} = f_{i}$, then $\gA$ takes component ${\tilde f}_{s_i}$ by probability $p_{s_i}$, i.e., $\gA$ takes component $f_{i}$ by probability $p_{s_i}$. 

To demonstrate the construction of adversarial functions, we first introduce the following class of matrices:
\begin{align*}
    \B(m, \omega) = 
\begin{bmatrix}
    & & & -1 & 1 \\
    & & -1 & 1 & \\
    & \Ddots & \Ddots & & \\
    -1 & 1 & & & \\
    \omega & & & & 
\end{bmatrix} 
    \in \sR^{m \times m}. 
\end{align*}
Then we define
\begin{align}
\A(m, \omega) \triangleq \B(m, \omega)^{\top} \B(m, \omega) = 
\begin{bmatrix}
    \omega^2 + 1 & -1 & & & \\
    -1 & 2 & -1 & & \\
    & \ddots & \ddots & & \\
    & & -1 & 2 & -1 \\
    & & & -1 & 1 
\end{bmatrix} \label{ABTB}.
\end{align}
The matrix $\A(m, \omega)$ is widely-used in the analysis of lower bounds for convex optimization \citep{nesterov2013introductory,agarwal2015lower,lan2017optimal,carmon2017lower,zhou2019lower}. We now present a decomposition of $\A(m, \omega)$ based on Eq.~(\ref{ABTB}).

Denote the $l$-th row of the matrix $\B(m, \omega)$ by $\vb_l(m, \omega)^{\top}$ 
and  let
\[\gL_i = \big\{ l: 1 \le l \le m, l \equiv i - 1 (\bmod~n) \big\},\quad i = 1, 2, \cdots, n.
\]
Our construction is based on  the following class of functions
\begin{align*}
    r(\vx; \lambda_0, \lambda_1, \lambda_2, m, \omega) \triangleq \frac{1}{n}\sum_{i=1}^n r_i(\vx; \lambda_0, \lambda_1, \lambda_2, m, \omega),
\end{align*}
where
{\small
\begin{align}\label{defn-f}
\!r_i(\vx; \lambda_0, \lambda_1, \lambda_2, m, \omega)=
\begin{cases}
\lambda_1 \sum\limits_{l \in \gL_1} \norm{\vb_{l}(m, \omega)^{\top}\vx}^2 + \lambda_2 \norm{\vx}^2 - \lambda_0 \inner{\ve_m}{\vx}, & \text{ for } i = 1, \\
\lambda_1\sum\limits_{l \in \gL_i} \norm{\vb_{l}(m, \omega)^{\top} \vx}^2 + \lambda_2 \norm{\vx}^2, & \text{ for } i = 2, 3, \cdots, n.
\end{cases}
\end{align}}

We can determine the smooth and strongly-convex coefficients of $r_i$ as follows.
\begin{prop}\label{prop:base}
For any $\lambda_1 > 0, \lambda_2 \ge 0, \omega<\sqrt{2}$, we have that the $r_i$ are $(4 \lambda_1 + 2 \lambda_2)$-smooth and $\lambda_2$-strongly convex, and $\{r_i\}_{i=1}^n$ is $L'$-average smooth where
\begin{align*}
    L' = 2\sqrt{\frac{4}{n} \left[ (\lambda_1 + \lambda_2)^2 + \lambda_1^2 \right] + \lambda_2^2}.
\end{align*}
\end{prop}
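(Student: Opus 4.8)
The plan is to compute each of the three quantities directly from the explicit quadratic form of $r_i$ given in \eqref{defn-f}, using the decomposition $\A(m,\omega) = \B(m,\omega)^\top\B(m,\omega)$ from \eqref{ABTB}. First I would write $r_i(\vx) = \vx^\top \M_i \vx + (\text{linear term})$, where for $i\ge 2$ we have $\M_i = \lambda_1 \sum_{l\in\gL_i} \vb_l \vb_l^\top + \lambda_2 \I$, and for $i=1$ the same with the extra linear term $-\lambda_0\inner{\ve_m}{\vx}$, which does not affect smoothness or strong convexity. Since $\nabla^2 r_i = 2\M_i$, the smooth/strongly-convex constants are just $2\lambda_{\max}(\M_i)$ and $2\lambda_{\min}(\M_i)$. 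The key elementary fact is that the rows $\vb_l(m,\omega)$ have a simple structure: each $\vb_l$ for $l<m$ equals $\ve_{m-l} - \ve_{m-l+1}$ (up to sign/indexing), so $\vb_l\vb_l^\top$ is a $2\times 2$ block with entries $\begin{psmallmatrix}1&-1\\-1&1\end{psmallmatrix}$ having eigenvalues $0$ and $2$; the row $\vb_m = \omega\ve_1^\top$ contributes $\omega^2\ve_1\ve_1^\top$. Crucially, because $\gL_i$ picks out rows that are $n$ apart, the vectors $\{\vb_l : l\in\gL_i\}$ have pairwise disjoint supports (for $n\ge 2$; the $n=1$ case is handled by the standard bound on $\A(m,\omega)$), so $\sum_{l\in\gL_i}\vb_l\vb_l^\top$ is block-diagonal with blocks of the above type. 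Hence $\lambda_{\max}\big(\sum_{l\in\gL_i}\vb_l\vb_l^\top\big) \le \max(2,\omega^2) = 2$ when $\omega<\sqrt2$, giving $\lambda_{\max}(\M_i)\le 2\lambda_1+\lambda_2$ and the $(4\lambda_1+2\lambda_2)$-smoothness; and $\lambda_{\min}(\M_i)\ge \lambda_2$, giving $\lambda_2$-strong convexity.

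For the average-smoothness constant I would use the definition directly: $\frac1n\sum_i \norm{\nabla r_i(\vx)-\nabla r_i(\vy)}^2 = \frac1n\sum_i \norm{2\M_i(\vx-\vy)}^2 = \frac4n (\vx-\vy)^\top\big(\sum_i \M_i^2\big)(\vx-\vy)$, so the squared average-smoothness constant is $\frac4n \lambda_{\max}\big(\sum_{i=1}^n \M_i^2\big)$. Writing $\M_i = \lambda_1 \P_i + \lambda_2\I$ with $\P_i = \sum_{l\in\gL_i}\vb_l\vb_l^\top$, one expands $\M_i^2 = \lambda_1^2\P_i^2 + 2\lambda_1\lambda_2\P_i + \lambda_2^2\I$. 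Summing over $i$, $\sum_i\P_i = \A(m,\omega)$ (the full decomposition), and since the $\P_i$ have disjoint supports pairwise across different column-blocks one still needs $\lambda_{\max}$ of the sum; the bound $\lambda_{\max}(\P_i)\le 2$ gives $\lambda_{\max}(\P_i^2)\le 4$ and $\lambda_{\max}(\sum_i\P_i^2)\le 4$, while $\lambda_{\max}(\sum_i\P_i)=\lambda_{\max}(\A(m,\omega))\le 4$. Combining: $\lambda_{\max}(\sum_i\M_i^2) \le 4\lambda_1^2 + 8\lambda_1\lambda_2 + \lambda_2^2 \cdot n$? — here I must be careful, since $\sum_i \lambda_2^2\I = n\lambda_2^2\I$, so $\lambda_{\max}(\sum_i\M_i^2)\le 4\lambda_1^2 + 8\lambda_1\lambda_2 + 4\lambda_2^2 + (n-?)\lambda_2^2$; reorganizing to match the claimed $4[(\lambda_1+\lambda_2)^2+\lambda_1^2] + n\lambda_2^2$ under the $\frac4n$ prefactor, i.e. $L'^2 = \frac{16}{n}[(\lambda_1+\lambda_2)^2+\lambda_1^2] + 4\lambda_2^2$, one bounds $\P_i^2 \preceq 2\P_i$ (since $\P_i$ has eigenvalues in $\{0\}\cup[\![\,\cdot\,]\!]\subseteq[0,2]$, actually in $\{0,2,\omega^2\}$), so $\sum_i\P_i^2 \preceq 2\A(m,\omega) \preceq 8\I$, and also $\sum_i(\lambda_1\P_i+\lambda_2\I)^2 \preceq \sum_i[(\lambda_1+\lambda_2)\P_i/1 + \cdots]$ — the cleanest route is the pointwise bound $\M_i^2 \preceq (\text{scalar})\M_i'$ for a suitable PSD sum; I would instead just bound $\norm{\M_i(\vx-\vy)}^2$ using $\norm{\M_i \vv}\le \lambda_1\norm{\P_i\vv} + \lambda_2\norm{\vv}$ and then $\sum_i\norm{\P_i\vv}^2 = \vv^\top(\sum_i\P_i^2)\vv \le 2\vv^\top\A\vv \le ?$, keeping one factor to exploit that $\sum_i\P_i = \A$ has bounded norm on the relevant subspace.

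The main obstacle is exactly this last bookkeeping: getting the constant in $L'$ sharp enough to match the stated formula, rather than a looser bound. The resolution is the observation that $\P_i^2 = 2\P_i + (\omega^2-2)(\omega^2\text{-part})$... more precisely, on the block where $\P_i$ has eigenvalue $2$ we have $\P_i^2 = 2\P_i$ there, and the $\omega^2\ve_1\ve_1^\top$ term (which lands in $\gL_1$ since row $m$ corresponds to $l=m$ and we need $m\equiv 0\pmod n$, handled by choice of $m$) contributes $\omega^4 \le 4$. So $\sum_{i}\P_i^2 \preceq 2\A(m,\omega)$, and then $\frac1n\sum_i\norm{2\M_i\vv}^2 = \frac4n\vv^\top(\lambda_1^2\sum\P_i^2 + 2\lambda_1\lambda_2\A + n\lambda_2^2\I)\vv \le \frac4n\big[(2\lambda_1^2 + 2\lambda_1\lambda_2)\vv^\top\A\vv + n\lambda_2^2\norm{\vv}^2\big]$, and finally using $\vv^\top\A\vv \le 4\norm{\vv}^2$ (from $\lambda_{\max}(\A(m,\omega))\le 4$ when $\omega<\sqrt2$) together with a small completion-of-the-square $2\lambda_1^2 + 2\lambda_1\lambda_2 \le (\lambda_1+\lambda_2)^2 + \lambda_1^2$ gives $L'^2 \le \frac{16}{n}[(\lambda_1+\lambda_2)^2+\lambda_1^2] + 4\lambda_2^2$, which is the claim. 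Throughout, the only nontrivial spectral input is $\lambda_{\max}(\A(m,\omega))\le 4$ and $\lambda_{\max}(\vb_l\vb_l^\top)\le 2$ for $l<m$, both of which are immediate from the explicit form of $\B(m,\omega)$.
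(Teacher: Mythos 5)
Your proof is correct, but it takes a genuinely different route from the paper's for the average-smoothness bound, so a comparison is worthwhile. The paper expands $\norm{(2\lambda_1\B_i^\top\B_i + 2\lambda_2\I)\vu}^2$ coordinate by coordinate, splits into the cases $m\in\gL_i$ and $m\notin\gL_i$, and bounds the resulting $2\times 2$ block contributions via the scalar inequality $(2\lambda_1(u_l{-}u_{l+1}) + 2\lambda_2 u_l)^2 + (-2\lambda_1(u_l{-}u_{l+1}) + 2\lambda_2 u_{l+1})^2 \le 8[(\lambda_1{+}\lambda_2)^2 + \lambda_1^2](u_l^2 + u_{l+1}^2)$; summing over $i$ then covers each row index exactly once. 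You instead package everything as an operator inequality on $\sum_i\M_i^2$, using the three clean facts (i) $\P_i^2 = \sum_{l\in\gL_i}\norm{\vb_l}^2\vb_l\vb_l^\top \preceq 2\P_i$ (disjoint supports plus $\omega^2<2$), (ii) $\sum_i\P_i = \A(m,\omega)$, and (iii) $\lambda_{\max}(\A(m,\omega))\le 4$, followed by the scalar completion $2\lambda_1(\lambda_1{+}\lambda_2) \le (\lambda_1{+}\lambda_2)^2 + \lambda_1^2$, which is exactly $\lambda_2^2\ge 0$ — the same algebraic fact that also underlies the paper's coordinate inequality. Your route is more modular and in fact yields a slightly tighter constant, namely $L''^2 = \frac{4}{n}\lambda_{\max}\bigl(\sum_i\M_i^2\bigr) \le \frac{32}{n}\lambda_1(\lambda_1{+}\lambda_2) + 4\lambda_2^2 \le L'^2$, so the claimed $L'$ follows a fortiori. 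The one extra dependency you incur is the spectral bound $\lambda_{\max}(\A(m,\omega))\le 4$, which the paper avoids by staying coordinate-wise; it is immediate from $\vu^\top\A\vu = \sum_{l<m}(u_{m-l}-u_{m-l+1})^2 + \omega^2 u_1^2 \le 4\norm{\vu}^2$ but is still an additional (standard) ingredient. The smoothness and strong-convexity parts of your argument coincide with the paper's: both rest on the disjoint-support observation $\Norm{\B_i^\top\B_i}\le 2$.

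One small note on bookkeeping: you write ``$\lambda_{\min}(\M_i)\ge\lambda_2$ gives $\lambda_2$-strong convexity,'' but since $\nabla^2 r_i = 2\M_i$ this actually yields $2\lambda_2$-strong convexity; that is of course consistent with (indeed stronger than) the stated $\lambda_2$-strong convexity, so no harm is done, but be aware of the factor of two so the reasoning stays parallel to the smoothness side where you correctly track $\nabla^2 r_i = 2\M_i \preceq (4\lambda_1+2\lambda_2)\I$.
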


We define the subspaces $\{\gF_k\}_{k=0}^m$ where 
\begin{align*}
\gF_k = \begin{cases}
\spn \{ \ve_m, \ve_{m-1}, \cdots, \ve_{m-k+1}\}, & \text{for } 1 \le k \le m, \\
 \{\vzero\}, & \text{for } k=0.
\end{cases}
\end{align*}

The following technical lemma plays a crucial role in our proof.
\begin{lemma}\label{lem:jump}
For any $\lambda_0 \neq 0, \lambda_1 > 0, \lambda_2 \ge 0$ and $\vx \in \gF_k$, $0 \le k < m$, 
we have that
\begin{align*}
    \nabla r_{i}(\vx;\lambda_0, \lambda_1, \lambda_2, m, \omega)\; \mbox{ and }  ~\prox_{r_{i}}^{\gamma} (\vx)\in 
    \begin{cases}
    \gF_{k+1}, & \text{ if } k \equiv i - 1 (\bmod ~n), \\
    \gF_{k}, & \text{ otherwise}.
    \end{cases}
\end{align*}
\end{lemma}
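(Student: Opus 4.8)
The plan is to analyze the gradient and proximal operators of $r_i$ explicitly in coordinates and exploit the sparsity pattern of the rows $\vb_l(m,\omega)$. First I would observe that $\gF_k = \spn\{\ve_m,\dots,\ve_{m-k+1}\}$ consists exactly of vectors whose first $m-k$ coordinates vanish, so membership in $\gF_k$ is a statement about which leading coordinates are zero. The key structural fact is that each row $\vb_l(m,\omega)^\top$ for $l<m$ has support $\{l,l+1\}$ (entries $-1$ and $1$), while $\vb_m(m,\omega)^\top = \omega\ve_1^\top$ has support $\{1\}$; hence $\norm{\vb_l(m,\omega)^\top\vx}^2$ depends only on coordinates $l,l+1$, and its gradient $2(\vb_l^\top\vx)\vb_l$ is supported on $\{l,l+1\}$ as well.

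Next I would compute $\nabla r_i(\vx;\dots)$ directly from \eqref{defn-f}: it is a sum of terms $2\lambda_1(\vb_l^\top\vx)\vb_l$ over $l\in\gL_i$, plus $2\lambda_2\vx$, plus (for $i=1$) the constant $-\lambda_0\ve_m$. For $\vx\in\gF_k$ with $k<m$, the first $m-k$ coordinates of $\vx$ are zero. The term $2\lambda_2\vx$ stays in $\gF_k$. A term $2\lambda_1(\vb_l^\top\vx)\vb_l$ with $l\in\gL_i$ lives in $\spn\{\ve_l,\ve_{l+1}\}$; this is contained in $\gF_k$ precisely when $l\ge m-k+1$, i.e. $l\in\{m-k+1,\dots,m\}$, and it can produce a component in $\gF_{k+1}\setminus\gF_k$ (namely along $\ve_{m-k}$) only when $l=m-k$, and even then only if $\vb_l^\top\vx\ne0$, which requires $l+1 = m-k+1$ to be a nonzero coordinate — consistent. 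The crucial point is that $l=m-k$ belongs to $\gL_i$ iff $m-k\equiv i-1\pmod n$, i.e. iff $k\equiv i-1\pmod n$ (using $m-k\equiv -k \equiv i-1$, so one must be careful: $m-k \equiv i-1 \pmod n$; I should double-check whether the congruence in the statement is stated relative to $m$, but since $1\le l\le m$ and $\gL_i$ is defined by $l\equiv i-1\pmod n$, the condition is simply $m-k\equiv i-1\pmod n$; presumably the paper's normalization, perhaps $m$ being a multiple of $n$ or the indexing of $\ve$, makes this equal to $k\equiv i-1\pmod n$ — I would reconcile this with the exact convention and state it cleanly). When $k\not\equiv i-1\pmod n$, no row index $l=m-k$ appears in $\gL_i$, and the lowest-index row in $\gL_i$ that interacts with the nonzero part of $\vx$ has $l\ge m-k+1$, so $\nabla r_i(\vx)\in\gF_k$; the $-\lambda_0\ve_m$ term is harmless since $\ve_m\in\gF_1\subseteq\gF_k$ when $k\ge1$, and when $k=0$ it puts the gradient in $\gF_1$, matching the $i=1$ case.

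For the proximal operator I would write $\vu^\star=\prox_{r_i}^\gamma(\vx) = \argmin_\vu\{r_i(\vu) + \frac1{2\gamma}\norm{\vx-\vu}^2\}$; the optimality condition is $\nabla r_i(\vu^\star) + \frac1\gamma(\vu^\star-\vx)=0$, i.e. $(\I + \gamma\nabla^2 r_i)\vu^\star = \vx + \gamma\cdot(\text{constant term})$, since $r_i$ is quadratic. The Hessian $\nabla^2 r_i$ is a banded matrix: $2\lambda_1\sum_{l\in\gL_i}\vb_l\vb_l^\top + 2\lambda_2\I$, where each $\vb_l\vb_l^\top$ for $l<m$ is supported on the $2\times2$ block $\{l,l+1\}$. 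I would argue that the linear system decouples along the leading coordinates: if $\vx\in\gF_k$, then $\vx + \gamma\cdot(\text{const})$ lies in $\gF_{k}$ (or $\gF_1$ in the $i=1,k=0$ case via the $\ve_m$ term), and I claim $(\I+\gamma\nabla^2 r_i)$ maps $\gF_j$ into itself for the relevant $j$, hence its inverse does too; more precisely the system restricted to coordinates $\{1,\dots,m-k-1\}$ (or $\{1,\dots,m-k\}$) is homogeneous and triangular/invertible, forcing those coordinates of $\vu^\star$ to vanish. The one subtlety that produces the "$\gF_{k+1}$ vs $\gF_k$" dichotomy is exactly whether the row $l=m-k$ appears in $\gL_i$: if it does (the case $k\equiv i-1\pmod n$), coupling reaches coordinate $m-k$ and $\vu^\star$ can have a nonzero $(m-k)$-th entry but not beyond, giving $\vu^\star\in\gF_{k+1}$; otherwise the coupling does not reach coordinate $m-k$ and $\vu^\star\in\gF_k$.

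The main obstacle I anticipate is the proximal case: unlike the gradient, which is a single explicit vector, $\prox$ requires inverting $\I+\gamma\nabla^2 r_i$, so I must argue carefully that no "fill-in" propagates past coordinate $m-k$ (resp. $m-k-1$) when solving the banded linear system. The clean way to do this is to permute coordinates into the order $m, m-1, \dots, 1$ and check that $\nabla^2 r_i$ is block upper-triangular with respect to the flag $\gF_0\subset\gF_1\subset\cdots$: each generating term $\vb_l\vb_l^\top$ only couples $\ve_l$ and $\ve_{l+1}$, with $\ve_{l+1}$ deeper in the flag than $\ve_l$, so adding $\gamma$ times this to $\I$ preserves every $\gF_j$ with $j\le m-l-1$ trivially and every $\gF_j$ with $j\ge m-l+1$ by the block structure; the only $\gF_j$ that can be enlarged is $j=m-l$. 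Intersecting over $l\in\gL_i$, the matrix $\I+\gamma\nabla^2 r_i$ preserves $\gF_k$ exactly when $m-k\notin\gL_i$ and enlarges it to at most $\gF_{k+1}$ when $m-k\in\gL_i$. Since the operator is invertible (positive definite for $\lambda_1>0$, and still invertible for $\lambda_2\ge0$ because the $i$-component quadratic form is positive definite on the relevant subspace — or one argues via the banded/triangular structure directly), the inverse preserves the same subspaces, and combined with $\vx + \gamma\,(\text{const})\in\gF_k$ (or $\gF_{k+1}$ when $k\equiv i-1$, $i=1$) we get the claim. I would then close by noting the gradient computation is the special case $\gamma\to\infty$ of this argument, or just present it separately as it is elementary.
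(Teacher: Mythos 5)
Your overall route is the same as the paper's: read the gradient off the row sparsity of $\B(m,\omega)$, and treat the prox through the optimality condition $\bigl(2\lambda_1\B_i^{\top}\B_i+(2\lambda_2+\frac{1}{\gamma})\I\bigr)\vu=\eta_i\ve_m+\frac{1}{\gamma}\vx$ plus a structural argument for the resulting linear system (the paper inverts explicitly using $(\I+c_2\B_i^{\top}\B_i)^{-1}=\I-\B_i^{\top}(c_2^{-1}\I+\B_i\B_i^{\top})^{-1}\B_i$ with $\B_i\B_i^{\top}$ diagonal, since rows inside one group $\gL_i$ have disjoint supports; your flag-invariance argument is a fine substitute).

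The genuine problem is your ``key structural fact.'' The rows of $\B(m,\omega)$ are listed from the bottom-right corner upward: for $l<m$ one has $\vb_l=-\ve_{m-l}+\ve_{m-l+1}$, so the support of $\vb_l$ is $\{m-l,m-l+1\}$, not $\{l,l+1\}$ (only $\vb_m=\omega\ve_1$ matches your reading). With the correct indexing, for $\vx\in\gF_k$ one gets $\vb_l^{\top}\vx=0$ for $l>k$, $\vb_l\in\gF_k$ for $l<k$, and $\vb_k\in\gF_{k+1}$, so the unique critical row is $l=k$ and the jump condition is $k\in\gL_i$, i.e.\ exactly $k\equiv i-1~(\bmod~n)$ as the lemma states, with no extra hypothesis. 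Under your labeling the critical row becomes $l=m-k$ and your argument proves a jump condition $m-k\equiv i-1~(\bmod~n)$, which is a different statement; you flagged the mismatch but did not resolve it, and the resolution you guessed at (``perhaps $m$ is a multiple of $n$'') is not available — the theorems choose $m$ from $\kappa$ and $\eps$ with no divisibility constraint. So as written the proof establishes the wrong congruence; the repair is purely a relabeling, but it is needed.

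A second, smaller gap is in the prox step for the jump case $k\in\gL_i$: there $\gF_k$ is not invariant under the system matrix, so ``the inverse preserves the same subspaces'' does not apply to $\gF_k$, and the statement that the image of $\gF_k$ is ``at most $\gF_{k+1}$'' controls the forward map, not the preimage. The correct closing move is to pass to $\gF_{k+1}$ and note that $k+1\notin\gL_i$ (consecutive integers cannot both be $\equiv i-1$ modulo $n$ since $n\ge 2$), so $2\lambda_1\B_i^{\top}\B_i+(2\lambda_2+\frac{1}{\gamma})\I$ maps $\gF_{k+1}$ into itself; it is positive definite (the $\frac{1}{\gamma}$ term makes invertibility automatic even when $\lambda_2=0$), hence its inverse preserves $\gF_{k+1}$ as well, and since $\eta_i\ve_m+\frac{1}{\gamma}\vx\in\gF_k\subseteq\gF_{k+1}$ (with $\ve_m\in\gF_1$ covering the $k=0$, $i=1$ case) you conclude $\prox_{r_i}^{\gamma}(\vx)\in\gF_{k+1}$.
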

In short, if $\vx \in \gF_k$ and let $f_i(\vx)\triangleq r_i(\vx;\lambda_0, \lambda_1, \lambda_2,\omega)$, then there exists only one $i\in\{1,\dots,n\}$ such that $h_f(\vx, i, \gamma)$ could (and only could) provide additional information in $\gF_{k+1}$.
The ``only one'' property is important to the lower bound analysis for  first order stochastic optimization algorithms~\citep{lan2017optimal,zhou2019lower}, but these prior constructions only work for IFO rather than PIFO.

Lemma \ref{lem:jump} implies that $\vx_t = \vzero$ will host until algorithm $\gA$ draws the component $f_1$.
Then, for any $t < T_1 = \min_t \{t: i_t = 1\}$, we have $\vx_t \in \gF_0$ and $\vx_{T_1} \in \gF_1$. The value of $T_1$ can be regarded as the smallest integer such that $\vx_{T_1}$ could host.
Similarly, we can define $T_k$ to be the smallest integer such that $\vx_{T_k} \in \gF_k$ could host. We give the formal definition of $T_k$ recursively and connect it to geometrically distributed random variables in the following corollary.

\begin{corollary}\label{coro:stopping-time}
Let
\begin{align}\label{def:stoppong-time}
    T_0 = 0,~\text{ and } 
    ~T_k = \min_t \{t: t > T_{k-1}, i_t \equiv k~(\bmod ~n)\}~\text{ for } k \ge 1.
\end{align}
Then for any $k \ge 1$ and $t < T_{k}$, we have $\vx_t \in \gF_{k-1}$. 
Moreover, $T_k$ can be written as sum of $k$ independent random variables $\{Y_l\}_{1 \le l \le k}$, i.e.,
\begin{align*}
    T_k = \sum_{l=1}^{k} Y_l,
\end{align*}
where $Y_l$ follows a geometric distribution with success probability $q_l = p_{l'}$ where $l' \equiv l~(\bmod~n), 1 \le l' \le n$. 
\end{corollary}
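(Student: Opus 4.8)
The plan is to prove the two assertions of the corollary in sequence, using Lemma~\ref{lem:jump} as the engine for the first and a standard ``waiting-time'' argument for the second.

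First I would establish the containment $\vx_t \in \gF_{k-1}$ for $t < T_k$ by induction on $k$, with an inner induction on $t$. The base case $k=1$ says $\vx_t \in \gF_0 = \{\vzero\}$ for $t < T_1$; since $\vx_0 = \vzero$ and, by Lemma~\ref{lem:jump}, $\nabla r_i(\vzero;\cdots)$ and $\prox_{r_i}^\gamma(\vzero)$ stay in $\gF_0$ unless $i \equiv 1 \pmod n$ (which is exactly the event defining $T_1$), the span in the PIFO definition collapses to $\{\vzero\}$ for every $t<T_1$. For the inductive step, suppose the claim holds for $k-1$, so $\vx_t \in \gF_{k-2} \subseteq \gF_{k-1}$ for $t < T_{k-1}$, and note $\vx_{T_{k-1}} \in \gF_{k-1}$ by one application of Lemma~\ref{lem:jump} (the oracle call at step $T_{k-1}$ with index $\equiv k-1$ lifts from $\gF_{k-2}$ to $\gF_{k-1}$). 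Then for $T_{k-1} \le t < T_k$, every oracle query $h_f(\vx_{t'}, i_{t'+1}, \gamma)$ with $t' < t$ has $i_{t'+1} \not\equiv k \pmod n$ (by definition of $T_k$ as the first such time after $T_{k-1}$), so Lemma~\ref{lem:jump} keeps all gradients and prox evaluations inside $\gF_{k-1}$; since the previous iterates are in $\gF_{k-1}$ by the inductive hypothesis together with the sub-step just handled, the span stays in $\gF_{k-1}$ and hence $\vx_t \in \gF_{k-1}$.

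Second, for the distributional claim, I would read off from the definition~(\ref{def:stoppong-time}) that $T_k - T_{k-1}$ is the number of additional draws needed until the index $i_t$ first lands in the residue class $k \bmod n$, i.e. equals $k' \bmod n$ with $1 \le k' \le n$. Because the $i_t$ are i.i.d.\ with $\sP(i_t = k') = p_{k'}$ and independent of the past, each increment $Y_l \triangleq T_l - T_{l-1}$ is geometric with success probability $q_l = p_{l'}$, $l' \equiv l \pmod n$, and—crucially—the increments are mutually independent because disjoint blocks of the i.i.d.\ sequence $(i_t)$ govern disjoint waiting times (the memorylessness of the geometric / the strong Markov property of the renewal structure). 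Summing, $T_k = \sum_{l=1}^k Y_l$ with the stated marginals and independence.

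The main obstacle is the bookkeeping in the inductive step of the first part: one must be careful that at the moment $t = T_{k-1}$ the iterate has already moved up to $\gF_{k-1}$ (so it is \emph{not} true that $\vx_t \in \gF_{k-2}$ for all $t < T_k$), and that between $T_{k-1}$ and $T_k$ no further lift occurs. This requires invoking Lemma~\ref{lem:jump} with the correct residue class at each step and checking that the ``otherwise'' branch applies to every query in that window; once the indexing is set up cleanly the argument is routine. The second part is essentially a definitional unwinding of the geometric waiting time plus independence of disjoint segments of an i.i.d.\ sequence, so I expect no real difficulty there.
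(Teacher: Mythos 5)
Your proposal is correct and follows essentially the same route as the paper: an inductive containment argument driven by Lemma~\ref{lem:jump}, followed by the standard geometric-waiting-time decomposition of $T_k$ into independent increments. The paper phrases the induction in terms of the running span $\gM_t$ of all oracle outputs rather than the iterates $\vx_t$ alone, which makes the bookkeeping you flag slightly cleaner — in particular, your parenthetical claim that every query with $t'<t<T_k$ has $i_{t'+1}\not\equiv k\pmod n$ is not literally true for $t'+1<T_{k-1}$, and those earlier outputs stay inside $\gF_{k-1}$ only because they were issued at iterates lying in deeper subspaces of the flag, a point the span-based formulation handles automatically — but the underlying argument is identical.
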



The basic idea of our analysis is that we guarantee the minimizer of $r$ lies in $\gF_m$
and assure the PIFO algorithm extend the space of $\spn\{\vx_0,\vx_1,\dots,\vx_t\}$ slowly with $t$ increasing. 
We know that $\spn\{\vx_0,\vx_1,\dots,\vx_{T_k}\} \subseteq \gF_{k}$ by Corollary \ref{coro:stopping-time}.   
Hence, $T_k$ is just the quantity that reflects how  $\spn\{\vx_0,\vx_1,\dots,\vx_t\}$ verifies. Because  $T_k$ can be written as the sum of geometrically distributed random variables, we needs to introduce some properties of such random variables which derive the lower bounds of our construction.

\begin{lemma}\label{lem:geo}
Let $\{Y_i\}_{1 \le i \le N}$ be independent random variables, 
and $Y_i$ follows a geometric distribution with success probability $p_i$.
Then
\begin{align}
    \pr{\sum_{i=1}^N Y_i > \frac{N^2}{4(\sum_{i=1}^N p_i)}} \ge 1 - \frac{16}{9N}.
\end{align}
\end{lemma}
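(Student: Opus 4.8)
The plan is to control $\sum_{i=1}^N Y_i$ from below by combining a second-moment (Paley–Zygmund type) argument with the observation that the sum is smallest, subject to a fixed budget $\sum p_i = S$, when the $p_i$ are spread out in the worst possible way. First I would recall that for a geometric random variable $Y_i$ with success probability $p_i$ we have $\E Y_i = 1/p_i$ and $\Var Y_i = (1-p_i)/p_i^2 \le 1/p_i^2$. Writing $T = \sum_{i=1}^N Y_i$, independence gives $\E T = \sum_i 1/p_i$ and $\Var T \le \sum_i 1/p_i^2$. By the Cauchy–Schwarz inequality, $\E T = \sum_i 1/p_i \ge N^2 / \sum_i p_i$, so the threshold $N^2/(4\sum_i p_i)$ is at most $\tfrac14 \E T$. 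Thus it suffices to show $\pr{T > \tfrac14 \E T} \ge 1 - \tfrac{16}{9N}$, and for this I would use the Paley–Zygmund inequality in the form $\pr{T > \tfrac14 \E T} \ge \pr{T \ge \tfrac14 \E T} \ge \big(1 - \tfrac14\big)^2 \cdot \frac{(\E T)^2}{\E[T^2]} = \frac{9}{16}\cdot \frac{(\E T)^2}{(\E T)^2 + \Var T}$, which rearranges to $1 - \frac{9}{16}\cdot\frac{\Var T}{(\E T)^2 + \Var T} \ge 1 - \frac{9}{16}\cdot\frac{\Var T}{(\E T)^2}$.

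It then remains to bound $\Var T / (\E T)^2 \le \big(\sum_i 1/p_i^2\big)\big/\big(\sum_i 1/p_i\big)^2$ by something of order $1/N$. Setting $a_i = 1/p_i \ge 1$, I need $\sum_i a_i^2 \le C \big(\sum_i a_i\big)^2 / N$; but this is false in general (e.g. one huge $a_i$), so the key structural point — and the step I expect to be the main obstacle — is that the inequality to be proven is about a \emph{threshold}, not about the mean, so I should not pass through $\E T$ so crudely. Instead I would work directly: the event $\{T > N^2/(4S)\}$ where $S = \sum p_i$. The right move is to apply Paley–Zygmund to $T$ but compare the threshold $N^2/(4S)$ against $\E T$ using $\E T \ge N^2/S$ (Cauchy–Schwarz), so the threshold is $\le \tfrac14\E T$ regardless; then Paley–Zygmund with parameter $\theta = 1/4$ gives lower bound $\frac{9}{16}(\E T)^2/\E[T^2]$. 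Now $\E[T^2] = (\E T)^2 + \Var T$ and $\Var T \le \sum_i 1/p_i^2$. Here I use $1/p_i^2 \le (\sum_j 1/p_j)\cdot(1/p_i)$ is not enough; instead note $\Var(Y_i) = (1-p_i)/p_i^2 \le (1/p_i)(1/p_i - 1) \le (1/p_i)(1/p_i)$, and crucially $\Var(Y_i) \le (1/p_i)^2$ while each term $1/p_i$ appears in $\E T$; by Cauchy–Schwarz $\big(\sum 1/p_i\big)^2 \ge N \sum 1/p_i^2 \ge N\,\Var T$, hence $\Var T/(\E T)^2 \le 1/N$.

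Putting the pieces together: $\pr{T > N^2/(4S)} \ge \pr{T \ge \tfrac14 \E T} \ge \tfrac{9}{16}\cdot\frac{(\E T)^2}{(\E T)^2 + \Var T} \ge \tfrac{9}{16}\cdot\frac{1}{1 + 1/N} = \tfrac{9}{16}\cdot\frac{N}{N+1}$. This gives a lower bound of the form $\tfrac{9}{16} - O(1/N)$, which is around $9/16$, not the claimed $1 - \tfrac{16}{9N}$ which tends to $1$. So the naive Paley–Zygmund constant is too weak; to reach a bound tending to $1$ I will instead prove a one-sided concentration (lower tail) bound: show $\pr{T \le \tfrac14 \E T}$ is small, of order $1/N$, using Chebyshev on the lower tail, $\pr{\E T - T \ge \tfrac34\E T} \le \frac{\Var T}{(3\E T/4)^2} = \frac{16}{9}\cdot\frac{\Var T}{(\E T)^2} \le \frac{16}{9N}$, by exactly the Cauchy–Schwarz bound $\Var T \le (\E T)^2/N$ above. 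Since the threshold $N^2/(4S) \le \tfrac14 \E T$, we get $\pr{T \le N^2/(4S)} \le \pr{T \le \tfrac14\E T} \le \frac{16}{9N}$, which is precisely the claim. The one genuinely delicate inequality is $\Var T \le (\E T)^2/N$, i.e. $\sum_i 1/p_i^2 \le \tfrac1N(\sum_i 1/p_i)^2$ — wait, that is Cauchy–Schwarz in the \emph{wrong} direction; the correct chain is $\sum_i 1/p_i^2 \le (\sum_i 1/p_i)^2$ trivially but I want the $1/N$ factor, which requires $\Var(Y_i) = (1-p_i)/p_i^2$ and the \emph{power-mean} inequality $\sum a_i^2 \ge \tfrac1N(\sum a_i)^2$ — again the wrong direction. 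So the honest route is: $\Var T = \sum_i \frac{1-p_i}{p_i^2} \le \sum_i \frac{1}{p_i^2}$ and $(\E T)^2 = (\sum_i \tfrac1{p_i})^2 \ge \sum_i \tfrac1{p_i^2} \cdot$ nothing — hence one cannot get $1/N$ from Chebyshev alone either. The real proof must therefore exploit that $T$ is a sum of \emph{many} independent variables via a sharper tool; I would use the Chernoff/Bernstein-type lower-tail bound for sums of geometrics, or the explicit MGF $\E[e^{-sY_i}]$, which yields exponential decay in $N$ and comfortably beats $16/(9N)$ once the threshold is a constant fraction of the mean. This MGF computation — tracking $\E e^{-s T} = \prod_i \frac{p_i e^{-s}}{1-(1-p_i)e^{-s}}$ and optimizing $s$ — is the step I expect to require the most care.
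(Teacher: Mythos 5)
Your diagnosis of the difficulty is correct and useful: you rightly observe that the threshold $N^2/\bigl(4\sum_i p_i\bigr)$ is at most $\tfrac14\E T$ by Cauchy--Schwarz, and you correctly catch yourself on the point that Chebyshev applied directly to $T=\sum Y_i$ cannot close the argument, because the needed bound $\Var T \le (\E T)^2/N$, i.e.\ $\sum_i 1/p_i^2 \le \tfrac1N\bigl(\sum_i 1/p_i\bigr)^2$, is false for heterogeneous $p_i$ (power--mean runs the other way). But the fix you then propose --- switching to an MGF/Chernoff lower-tail bound on $\prod_i \frac{p_i e^{-s}}{1-(1-p_i)e^{-s}}$ --- is left entirely unexecuted, and it is not the route the paper takes nor is it clear it yields the stated constant cleanly. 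As written, the proposal stops at the point where the real work begins.

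The missing idea in the paper's proof is a symmetrization (pairwise averaging) step. Using the explicit formula for $\pr{X_1+X_2>j}$ with $X_i\sim\geo(p_i)$, the paper proves (Lemma~\ref{lem:two-geo} and Corollary~\ref{coro:two-geo-plusz}) that replacing a pair $(p_i,p_j)$ by two copies of $\tfrac{p_i+p_j}{2}$ can only \emph{decrease} the tail $\pr{\sum_l Y_l > j}$ for every fixed integer threshold $j$; this requires a small but nontrivial calculus argument (the inequality $1-\tfrac{j-1}{x+j/2}\le\bigl(\tfrac{x}{x+1}\bigr)^{j-1}$). Iterating, the tail is minimized in the i.i.d.\ case with common parameter $\bar p = \tfrac1N\sum_i p_i$. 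In that case $\E T = N/\bar p = N^2/\sum_i p_i$, so the threshold in the lemma is exactly $\tfrac14\E T$, and now Chebyshev \emph{does} work because $\Var T/(\E T)^2 = (1-\bar p)/N \le 1/N$, giving $\pr{T\le\tfrac14\E T}\le\tfrac{16}{9}\cdot\tfrac1N$ as claimed. So the structure is ``reduce to i.i.d.\ by a majorization-type exchange argument, then apply your own Chebyshev computation,'' which is the piece your proposal never supplies.
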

From Lemma \ref{lem:geo}, the following result implies how many PIFO calls we need.
\begin{lemma}\label{lem:base}
If $M \ge 1$ satisfies $\min_{\vx \in \gF_M} f(\vx) - \min_{\vx \in \sR^m} f(\vx) \ge 
9\eps$ and $N = n(M+1)/4$, 
then we have
\begin{align*}
    \min_{t \le N} \E f(\vx_t) - \min_{\vx \in \sR^m} f(\vx) \ge \eps.
\end{align*}
\end{lemma}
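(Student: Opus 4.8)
\textbf{Proof proposal for Lemma~\ref{lem:base}.}

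The plan is to combine Corollary~\ref{coro:stopping-time} with Lemma~\ref{lem:geo} to show that, with high probability, the iterate $\vx_t$ has not yet escaped the subspace $\gF_M$ by time $t = N$, and then to turn this ``staying trapped'' statement into the desired bound on $\min_{t \le N}\E f(\vx_t)$. First I would recall from Corollary~\ref{coro:stopping-time} that for any $k \ge 1$ and any $t < T_k$ we have $\vx_t \in \gF_{k-1}$; in particular, taking $k = M+1$, on the event $\{T_{M+1} > N\}$ we have $\vx_t \in \gF_M$ for all $t \le N$. On that event every such iterate satisfies $f(\vx_t) - \min_{\vx \in \sR^m} f(\vx) \ge \min_{\vx \in \gF_M} f(\vx) - \min_{\vx \in \sR^m} f(\vx) \ge 9\eps$ by hypothesis.

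Next I would lower-bound $\sP(T_{M+1} > N)$. By Corollary~\ref{coro:stopping-time}, $T_{M+1} = \sum_{l=1}^{M+1} Y_l$ where the $Y_l$ are independent geometric random variables with success probabilities $q_l = p_{l'}$, $l' \equiv l \pmod n$, $1 \le l' \le n$. Applying Lemma~\ref{lem:geo} with $N_{\mathrm{geo}} = M+1$ gives
\begin{align*}
\sP\!\left( T_{M+1} > \frac{(M+1)^2}{4\sum_{l=1}^{M+1} q_l} \right) \ge 1 - \frac{16}{9(M+1)}.
\end{align*}
Now I need $\dfrac{(M+1)^2}{4\sum_{l=1}^{M+1} q_l} \ge N = \dfrac{n(M+1)}{4}$, i.e. $\sum_{l=1}^{M+1} q_l \le M+1$. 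Since each $q_l = p_{l'} \le 1$, the sum of $M+1$ of them is at most $M+1$, so this holds; hence $\sP(T_{M+1} > N) \ge 1 - \tfrac{16}{9(M+1)}$. (One could get a sharper constant by noting $\sum_l q_l$ is, up to rounding, roughly $(M+1)/n \cdot \sum_j p_j = (M+1)/n$, but the crude bound already suffices here.)

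Finally I would assemble the pieces. For each fixed $t \le N$,
\begin{align*}
\E f(\vx_t) - \min_{\vx \in \sR^m} f(\vx)
&\ge \E\!\left[ \big(f(\vx_t) - \min_{\vx} f(\vx)\big)\, \1\{T_{M+1} > N\} \right] \\
&\ge 9\eps \cdot \sP(T_{M+1} > N) \ge 9\eps\left(1 - \frac{16}{9(M+1)}\right),
\end{align*}
where the first inequality uses $f(\vx_t) \ge \min_\vx f(\vx)$ so dropping the complementary event only decreases the right side, and the second uses the trapping argument above. Since $M \ge 1$, we have $1 - \tfrac{16}{9(M+1)} \ge 1 - \tfrac{8}{9} = \tfrac19$, so $\E f(\vx_t) - \min_\vx f(\vx) \ge 9\eps \cdot \tfrac19 = \eps$ for every $t \le N$, and in particular $\min_{t \le N}\E f(\vx_t) - \min_\vx f(\vx) \ge \eps$. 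I do not expect a serious obstacle here; the only point requiring a little care is making sure the threshold in Lemma~\ref{lem:geo} is at least $N$ (the $\sum q_l \le M+1$ check) and that $M \ge 1$ is exactly what makes the $\tfrac{16}{9(M+1)}$ loss small enough — this is presumably why the constant $9$ was chosen in the hypothesis $\min_{\vx \in \gF_M} f \ge 9\eps$.
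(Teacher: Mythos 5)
There is a genuine algebra error at the key step where you verify that the threshold from Lemma~\ref{lem:geo} is at least $N$. You need
\begin{align*}
\frac{(M+1)^2}{4\sum_{l=1}^{M+1} q_l} \;\ge\; N \;=\; \frac{n(M+1)}{4},
\end{align*}
and after cancelling $(M+1)/4$ on both sides this is equivalent to $\sum_{l=1}^{M+1} q_l \le (M+1)/n$, \emph{not} to $\sum_{l=1}^{M+1} q_l \le M+1$ as you wrote — you lost the factor of $n$. The trivial bound $q_l \le 1$ therefore only guarantees a threshold of $(M+1)/4$, which is smaller than $N$ by a factor of $n$, so your ``crude bound already suffices'' remark is false and the argument as written does not establish $\sP(T_{M+1} > N) \ge 1/9$.

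The correct bound $\sum_{l=1}^{M+1} q_l \le (M+1)/n$ — which you mention parenthetically and then dismiss — is in fact necessary, and it is \emph{not} automatic: it requires the standing WLOG assumption $p_1 \le p_2 \le \cdots \le p_n$ made in Section~\ref{sec:pre}. Under that ordering, $q_l = p_{l'}$ with $l' \equiv l \pmod n$ cycles through the probabilities starting from the smallest, so each block of $n$ consecutive $q_l$'s sums to exactly $1$ and any partial block of size $b < n$ sums to $p_1 + \cdots + p_b \le b/n$ (the average of the $b$ smallest is at most the overall average $1/n$). That gives $\sum_{l=1}^{M+1} q_l \le (M+1)/n$ and hence the desired threshold inequality. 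Once this is fixed, the rest of your argument (trapping in $\gF_M$ on the event $T_{M+1}>N$, and the computation $9\eps\bigl(1 - \tfrac{16}{9(M+1)}\bigr) \ge \eps$ for $M\ge 1$) matches the paper's proof exactly.
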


\begin{proof}
Denote $\min_{\vx \in \sR^m} f(\vx)$ by $f^*$. For $t \le N$, we have
\begin{align*}
    \E f(\vx_{t}) - f^* &\ge \E [f(\vx_{t}) - f^* | N < T_{M+1}] \pr{N < T_{M+1}}\\
    &\ge \E [\min_{\vx \in \gF_{M}} f(\vx) - f^* | N < T_{M+1}] \pr{N < T_{M+1}} \\
    &\ge 9\eps \pr{T_{M+1} > N},
\end{align*}
where $T_{M+1}$ is defined in (\ref{def:stoppong-time}), and the second inequality follows from Corollary \ref{coro:stopping-time} (if $N < T_{M+1}$, then $\vx_t \in \gF_M$ for $t \le N$).

By Corollary \ref{coro:stopping-time}, $T_{M+1}$ can be written as $T_{M+1} = \sum_{l=1}^{M+1} Y_l$,
where $\{Y_l\}_{1 \le l \le M+1}$ are independent random variables, and $Y_l$ follows a geometric distribution with success probability $q_l = p_{l'}$ ($l' \equiv l (\bmod ~n)$, $1 \le l' \le n$).

Recalling that $p_1 \le p_2 \le \cdots \le p_n$, we have 
$$\sum_{l=1}^{M+1} q_l \le \frac{M+1}{n}.$$
Therefore, by Lemma \ref{lem:geo}, we have
$$\pr{\sum_{l=1}^{M+1} Y_l > \frac{(M+1)n}{4}} \ge 1 - \frac{16}{9(M+1)} \ge \frac{1}{9},$$
that is, 
\[ \pr{T_{M+1} > N} \ge \frac{1}{9}.\]
Hence,
\begin{align*}
    \E f(\vx_{N}) - f^* \ge 9\eps \pr{T_{M+1} > N} \ge \eps.
\end{align*}

\end{proof}

\begin{remark}
In fact, a more strong conclusion hosts:
\begin{align*}
    \E \left[\min_{t \le N} f(\vx_t)\right] - \min_{\vx \in \sR^m} f(\vx) \ge \eps.
\end{align*}
\end{remark}

\section{Main Results}\label{sec:mainresults}

We present the our lower bound results for PIFO algorithms and summarize all of results in Table~\ref{table:UL} and \ref{table:DL} . We first start with smooth and strongly convex setting, then consider the general convex and average smooth cases.

\begin{thm}\label{thm:strongly}
For any PIFO algorithm $\gA$ and any $L, \mu, n, \Delta, \eps$ such that
$\kappa=L/\mu \ge n/2 + 1$, and $\eps/\Delta \le 0.00327$,
there exist a dimension $d = \gO \left( \sqrt{\kappa/n} \log \left(\Delta/\eps\right)\right)$ and $n$ $L$-smooth and $\mu$-strongly convex functions $\{f_i:\sR^d\rightarrow\sR\}_{i=1}^n$  such that $f(\vx_0) - f(\vx^*) = \Delta$. In order to find $\hat{\vx} \in \sR^d$ such that $\E f(\hat{\vx}) - f(\vx^{*}) < \eps$, $\gA$ needs at least $\Omega\left(\left(n {+} \sqrt{\kappa n}\right)\log\left(\Delta/\eps \right)\right)$ queries to $h_f$.
\end{thm}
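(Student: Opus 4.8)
The plan is to instantiate the general framework from Section~\ref{sec:pre} with the function class $r(\vx;\lambda_0,\lambda_1,\lambda_2,m,\omega)$ and suitable parameters, then apply Lemma~\ref{lem:base}. First I would fix the free parameters. Choose $\lambda_2=\mu$ so that strong convexity is exactly $\mu$, and choose $\lambda_1$ so that the smoothness constant $4\lambda_1+2\lambda_2 = L$, i.e. $\lambda_1=(L-2\mu)/4$; the hypothesis $\kappa\ge n/2+1$ is presumably what guarantees $\lambda_1>0$ (indeed $L\ge 2\mu$ follows, and more). Then pick $\omega<\sqrt{2}$ — concretely $\omega^2 = $ something like $2\mu/(\lambda_1) = 8\mu/(L-2\mu)$ or a comparable small quantity — so that the shifted tridiagonal matrix $\A(m,\omega)$ has its smallest eigenvalue controlled; this is the standard device (as in Nesterov's construction) ensuring the minimizer $\vx^*$ of $r$ has geometrically decaying coordinates, $x^*_j \sim \eta^{m-j}$ for some contraction factor $\eta = \eta(\omega)$ bounded away from $1$ by roughly $\Theta(\sqrt{\mu/L})=\Theta(1/\sqrt{\kappa})$, hence $\eta\approx \Theta(\sqrt{n/(\kappa n)})$ in the combined parameter. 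Finally $\lambda_0$ is a global scaling chosen to normalize $f(\vx_0)-f(\vx^*) = r(\vzero)-\min r = \Delta$.

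Next I would carry out the two quantitative estimates that feed Lemma~\ref{lem:base}. The first is a lower bound on $\min_{\vx\in\gF_M} r(\vx) - \min_{\vx\in\sR^m} r(\vx)$: since $\gF_M$ consists of vectors supported on the last $M$ coordinates, and the true minimizer has mass $\sim\eta^{2(m-j)}$ on coordinate $j$, restricting to $\gF_M$ forces an error of order (the tail mass) $\sim \eta^{2(m-M)}\cdot\Delta$, up to constants coming from the quadratic form. So $\min_{\vx\in\gF_M} f(\vx)-\min f \ge c\,\eta^{2(m-M)}\Delta$. Setting this $\ge 9\eps$ requires $m-M \le \frac{1}{2\log(1/\eta)}\log\!\big(c\Delta/(9\eps)\big)$, i.e. we may take $m - M = \Theta\!\big(\sqrt{\kappa/n}\,\log(\Delta/\eps)\big)$ roughly (after accounting for how $\eta$ relates to $\kappa$ and $n$ through $\omega$). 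Then choosing $m$ itself to be, say, $2(m-M)$ (so $M\ge 1$ and $M=\Theta(m)$), we get the dimension bound $d = m = \gO\big(\sqrt{\kappa/n}\,\log(\Delta/\eps)\big)$. The condition $\eps/\Delta \le 0.00327$ is exactly the numerical slack needed so that $M\ge 1$ and the constant $c/9$ business works out; I would track constants just carefully enough to land on that threshold. The second estimate is already done for us: Lemma~\ref{lem:base} with $N = n(M+1)/4$ gives $\min_{t\le N}\E f(\vx_t)-\min f \ge \eps$, so any algorithm needs more than $N$ queries, and $N = \Theta(nM) = \Theta(nm) = \Theta\big(\sqrt{\kappa n}\,\log(\Delta/\eps)\big)$.

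There is one more term to produce, the $\Omega(n\log(\Delta/\eps))$ part of the bound, needed when $\kappa = \Theta(n)$ so that $\sqrt{\kappa n} = \Theta(n)$ — actually in that regime $\sqrt{\kappa n}\log(1/\eps) = \Theta(n\log(1/\eps))$ already, so the $\Omega((n+\sqrt{\kappa n})\log(\Delta/\eps))$ bound is equivalent to $\Omega(\sqrt{\kappa n}\log(\Delta/\eps))$ under the hypothesis $\kappa\ge n/2+1$; I would just note this simplification rather than build a separate construction. Smoothness/strong-convexity of each $f_i:=r_i$ is handed to us by Proposition~\ref{prop:base}, and the "span stays in $\gF_{k}$" structure by Corollary~\ref{coro:stopping-time} (the hard combinatorial work — that proximal queries cannot jump more than one subspace — is Lemma~\ref{lem:jump}).

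The main obstacle I anticipate is the first estimate: getting a clean, tight lower bound on the restricted suboptimality $\min_{\vx\in\gF_M} r(\vx)-\min r$ with explicit constants, because $r$ is an average of the $r_i$ and the quadratic form $\A(m,\omega)$ is only block-decomposed across the $n$ groups $\gL_i$, not diagonal. I would handle this by noting $\sum_i \lambda_1\sum_{l\in\gL_i}\|\vb_l^\top\vx\|^2 = \lambda_1 \vx^\top\A(m,\omega)\vx$ reassembles the full tridiagonal form, so $r(\vx) = \frac{\lambda_1}{n}\vx^\top\A(m,\omega)\vx + \lambda_2\|\vx\|^2 - \frac{\lambda_0}{n}\langle\ve_m,\vx\rangle$ is a single explicit quadratic; then the restricted minimum over $\gF_M$ is again a quadratic minimization over a coordinate subspace, and the gap is computed via the Schur complement / the known eigen-structure of the perturbed tridiagonal matrix, exactly as in the classical Nesterov lower-bound computation. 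Choosing $\omega$ to make $\A(m,\omega)+\frac{n\lambda_2}{\lambda_1}\I$ have the right conditioning is the delicate point, and I would borrow the precise recursion $x^*_j$ satisfies (a second-order linear recurrence) to read off $\eta$ and the tail mass in closed form.
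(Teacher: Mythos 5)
Your plan follows the paper's own proof of Theorem~\ref{thm:strongly} essentially step by step: you instantiate the $r_i$ family with $\lambda_1,\lambda_2$ calibrated so that $4\lambda_1+2\lambda_2=L$ and the strong-convexity parameter is $\mu$; you observe that $r$ reassembles into a single quadratic $\frac{\lambda_1}{n}\vx^\top\A(m,\omega)\vx+\lambda_2\|\vx\|^2-\frac{\lambda_0}{n}\inner{\ve_m}{\vx}$; you note that the minimizer solves a second-order linear recurrence with geometric solution of ratio $\eta$; you lower-bound $\min_{\gF_M}r-\min r$ by $\Theta(\eta^{2(m-M)}\Delta)$; and you plug this into Lemma~\ref{lem:base} to get the $\Omega(nM)$ query bound. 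Your observation that $n=\gO(\sqrt{\kappa n})$ under the hypothesis $\kappa\ge n/2+1$ so that $\Omega\bigl((n+\sqrt{\kappa n})\log(\Delta/\eps)\bigr)$ reduces to $\Omega\bigl(\sqrt{\kappa n}\log(\Delta/\eps)\bigr)$ is exactly right and is implicit in the paper's final estimate.

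There is, however, a quantitative error in the heuristic you state along the way, and it is worth flagging because it is precisely the place where the $n$-dependence of the result is generated. You assert $\eta\approx 1-\Theta(1/\sqrt{\kappa})$ and propose $\omega^2\approx 8\mu/(L-2\mu)\approx 8/\kappa$. Both of these would be correct for the un-averaged quadratic $\lambda_1\vx^\top\A\vx+\lambda_2\|\vx\|^2$, but the function whose minimizer you must track is the average, which you yourself write down correctly as $\frac{\lambda_1}{n}\vx^\top\A\vx+\lambda_2\|\vx\|^2-\cdots$. That extra $1/n$ makes the coefficient ratio in the tridiagonal recurrence equal to $n\lambda_2/\lambda_1=\Theta(n/\kappa)$ rather than $\Theta(1/\kappa)$; the contraction factor is then $q=(\alpha-1)/(\alpha+1)$ with $\alpha=\sqrt{2(\kappa-1)/n+1}$, so that $1-q\approx\sqrt{2n/\kappa}$, and the boundary parameter must be set to $\omega^2=1-q=2/(\alpha+1)=\Theta(\sqrt{n/\kappa})$ to make the geometric ansatz solve the equation exactly. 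If you carried $\omega^2\sim 1/\kappa$ and $1-\eta\sim 1/\sqrt{\kappa}$ through, you would obtain $m=\Theta(\sqrt{\kappa}\log(\Delta/\eps))$, which is off by a factor $\sqrt{n}$ from the $d=\gO(\sqrt{\kappa/n}\log(\Delta/\eps))$ claimed in the theorem and stated at the end of your write-up — so as written your intermediate heuristic and your final answer are internally inconsistent. The fix is exactly what you describe at the close of your proposal: read $\eta=q$ off the recurrence for $\A(m,\omega)+\frac{n\lambda_2}{\lambda_1}\I$ and choose $\omega^2=1-q$, which is what the paper does in Proposition~\ref{prop:strongly}. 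The minor parameter differences ($\lambda_2=\mu$ versus the paper's $\lambda_2=\mu/2$, $\lambda_1=(L-2\mu)/4$ versus $(L-\mu)/4$) are cosmetic, though note the paper's choice keeps $\lambda_1>0$ for all $\kappa>1$, whereas yours requires $\kappa>2$.
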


\begin{remark}
In fact, the upper bound of the existing PIFO algorithm Point SAGA \citep{defazio2016simple}~
\footnote{
\citet{defazio2016simple} proves Point SAGA requires $\gO\left(\left(n + \sqrt{\kappa n}\right)\log\left(1/\eps \right)\right)$ PIFO calls to find $\hat\vx$ such that $\E\|{\hat\vx}-\vx^*\|_2^2<\eps$, where $\vx^*=\arg\min_\vx f(\vx)$, which is not identical to the condition $\E f({\hat\vx})-f(\vx^*)<\eps\|\vx_0-\vx^*\|_2^2$ in Theorem \ref{thm:strongly}. 
However, it is unnecessary to worry about it because we also establish a PIFO lower bound $\Omega\left(\left(n + \sqrt{\kappa n}\right)\log\left(1/\eps \right)\right)$ for $\E\|{\hat\vx}-\vx^*\|_2^2<\eps\|\vx_0-\vx^*\|_2^2$ in Theorem \ref{thm:strongly:example:2}.} is 
$\gO\left(\left(n + \sqrt{\kappa n}\right)\log\left(1/\eps \right)\right)$.
Hence, the lower bound in Theorem \ref{thm:strongly} is tight, 
while \citet{woodworth2016tight} only provided lower bound $\Omega\left(n {+} \sqrt{\kappa n}\log\left(1/\eps \right)\right)$ which is not optimal to $n$ dependency.
The theorem also shows that the PIFO algorithm can not be more powerful than the IFO algorithm in the worst case, because the upper bound of the IFO algorithm~\citep{allen2017katyusha} is also $\gO\left(\left(n {+} \sqrt{\kappa n}\right)\log\left(1/\eps \right)\right)$.
\end{remark}

Next we give the lower bound when the objective function is not strongly-convex.
\begin{thm}\label{thm:convex}
For any PIFO algorithm $\gA$ and any $L, n, B, \eps$ such that
$\eps \le L B^2 /4$,
there exist a dimension $d = \gO \left( 1 + B \sqrt{L/(n\eps)} \right)$ and $n$ $L$-smooth and convex functions $\{f_i:\sR^d\rightarrow\sR\}_{i=1}^n$   such that $\norm{\vx_0 - \vx^*} \le B$. In order to find $\hat{\vx} \in \sR^d$ such that $\E f(\hat{\vx}) - f(\vx^{*}) < \eps$, $\gA$ needs at least $\Omega\left(n {+} B\sqrt{nL/\eps} \right)$ queries to $h_f$.
\end{thm}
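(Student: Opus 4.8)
The plan is to instantiate the Section~\ref{sec:pre} framework in the ``flat'' ($\lambda_2=0$) regime to produce the $\sqrt{nL/\eps}$ term, and to glue this to a one-dimensional ``hidden component'' instance for the $\Omega(n)$ term. For the first part I would take $\lambda_2=0$, $\omega=1$, $\lambda_1=L/4$, and set $f_i(\vx)\triangleq r_i(\vx;\lambda_0,L/4,0,m,1)$ with $\lambda_0>0$ and $m\in\sN$ to be chosen; by Proposition~\ref{prop:base} each $f_i$ is $L$-smooth and convex. The first step is to make everything explicit. Since $\bigcup_{i=1}^n\gL_i=\{1,\dots,m\}$ one has $f(\vx)=\frac{\lambda_1}{n}\vx^\top\A(m,1)\vx-\frac{\lambda_0}{n}\inner{\ve_m}{\vx}$; restricting to $\gF_k$ keeps only the last $k$ coordinates, and the corresponding principal submatrix of $\A(m,1)$ is exactly $\A(k,1)$ (its diagonal is $(2,\dots,2,1)$, and $\omega=1$ makes the $k=m$ case consistent). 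Solving $\A(k,1)\vz=\ve_k$ gives $z_j=j$, hence
\[
\min_{\vx\in\gF_k}f(\vx)=-\frac{\lambda_0^2 k}{Ln},\qquad
\vx^*=\frac{2\lambda_0}{L}(1,2,\dots,m)^\top,\qquad
\norm{\vx^*}^2=\frac{4\lambda_0^2}{L^2}\sum_{j=1}^m j^2 .
\]

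The second step is the parameter tuning, and I expect this to be the crux. From the display, $\min_{\vx\in\gF_M}f-\min_{\vx\in\sR^m}f=\lambda_0^2(m-M)/(Ln)$. Choosing $\lambda_0=LB/(2m^{3/2})$ saturates $\norm{\vx^*}\le B$ and makes this gap equal to $LB^2(m-M)/(4m^3 n)$. The naive choice $M=m-1$ only yields a gap of order $LB^2/(m^3 n)$, so forcing it above $9\eps$ would cap $m$ at $\Theta\big((LB^2/(n\eps))^{1/3}\big)$ and give a far-too-weak bound. The right move is to let $m-M$ be a constant fraction of $m$: requiring $m-M\ge 36n\eps m^3/(LB^2)$ while keeping $m\lesssim B\sqrt{L/(n\eps)}$ permits $m=\Theta(B\sqrt{L/(n\eps)})$ together with $M=\Theta(m)\ge1$. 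Lemma~\ref{lem:base} (with $N=n(M+1)/4$) then gives $\min_{t\le N}\E f(\vx_t)-f(\vx^*)\ge\eps$, so $\gA$ needs $\Omega(nM)=\Omega(B\sqrt{nL/\eps})$ PIFO calls, using $d=m=\gO(B\sqrt{L/(n\eps)})$ coordinates. This works whenever $m\ge2$, i.e.\ roughly $\eps\lesssim LB^2/n$; and there $B\sqrt{nL/\eps}\gtrsim n$, so this single instance already realizes $\Omega(n+B\sqrt{nL/\eps})$.

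For the remaining range $\eps=\Theta(LB^2/n),\dots,LB^2/4$, where $n+B\sqrt{nL/\eps}=\Theta(n)$, I would use a stand-alone $\Omega(n)$ construction, unrelated to the framework: $f_j(\vx)=\tfrac L2 x_1^2$ for $2\le j\le n$ and $f_1(\vx)=\tfrac L2(x_1-\tau nB)^2$ with $\tau\in\{-1,+1\}$, so that $\vx^*=\tau B\,\ve_1$, $\norm{\vx^*}=B$ and $f(\vx)-f(\vx^*)=\tfrac L2(x_1-\tau B)^2$. Since $p_1\le1/n$, a PIFO algorithm queries $f_1$ within its first $n/2$ steps with probability at most $1/2$; on the complementary event its iterates are independent of $\tau$, so averaging over the two signs gives $\E f(\vx_{n/2})-f(\vx^*)\ge\tfrac12\cdot\tfrac{LB^2}{2}=\tfrac{LB^2}{4}\ge\eps$, and fixing the worse sign $\tau$ produces an instance needing $\Omega(n)$ queries in dimension $1$. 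Finally, for each $(L,n,B,\eps)$ with $\eps\le LB^2/4$ at least one of the two instances applies and already delivers $\Omega(n+B\sqrt{nL/\eps})$ in dimension $\gO(1+B\sqrt{L/(n\eps)})$; alternatively one may take their direct sum with each block rescaled to radius $B/\sqrt2$, which is $L$-smooth and convex, has $\norm{\vx^*}\le B$, and requires at least the larger of the two query counts.

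The genuinely hard step, as flagged, is the balancing in the second paragraph: unlike the strongly convex case, where the geometric decay of the optimal solution decouples its norm from the number of ``active'' coordinates, the flat construction has a minimizer that grows \emph{linearly}, so $\norm{\vx^*}^2$ scales like $m^3$; the suboptimality gap must therefore be forced above $9\eps$ with $m-M$ of order $m$ rather than $O(1)$, and it is precisely this that turns the attainable dimension from $(nL/\eps)^{1/3}$ into $B\sqrt{L/(n\eps)}$ and hence the query count into $\Omega(B\sqrt{nL/\eps})$. The remaining ingredients --- inverting $\A(k,1)$, the smoothness/convexity bookkeeping through Proposition~\ref{prop:base}, Lemma~\ref{lem:base}, and the elementary $\Omega(n)$ instance --- are routine.
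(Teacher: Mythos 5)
Your proposal is correct and follows essentially the same route as the paper: the same flat instance $r_i(\cdot;\lambda_0,L/4,0,m,1)$ with $\lambda_0\sim BL/m^{3/2}$, the same key observation that $m-M$ must be taken $\Theta(m)$ (the paper chooses $M=\lfloor(m-1)/2\rfloor$ and $m+1=\lfloor\sqrt{B^2L/(24n\eps)}\rfloor$) precisely because $\|\vx^*\|^2$ scales like $m^3$, and then Lemma~\ref{lem:base} yields $\Omega(B\sqrt{nL/\eps})$, with a separate one-dimensional instance supplying $\Omega(n)$ for large $\eps$. The only stylistic deviation is your $\Omega(n)$ instance: you introduce a sign $\tau$ and average over it, whereas in your own construction (and in the paper's Lemma~\ref{lem:convex:example:2}) the iterates literally stay at $\vzero$ until $f_1$ is drawn, so the averaging is unnecessary.
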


\begin{remark}
The lower bound in Theorem \ref{thm:convex} is the same as the one of \citeauthor{woodworth2016tight}'s result. However, our construction only requires the dimension be $\gO \left( 1 + B \sqrt{L/(n\eps)} \right)$, which is much smaller than 
$\gO \left(\frac{L^2B^4}{\eps^2}\log\left(\frac{nLB^2}{\eps}\right)\right)$ in
\citep{woodworth2016tight}.  
\end{remark}

Then we extend our results to the weaker assumption: that is, the objective function $F$ is $L$-average smooth~\citep{zhou2019lower}. We start with the case that $F$ is strongly convex.

\begin{thm}\label{thm:average:strongly}
For any PIFO algorithm $\gA$ and any $L, \mu, n, \Delta, \eps$ such that 
$\kappa=L/\mu \ge \sqrt{3/n} \left(\frac{n}{2} + 1\right)$, and $\eps/\Delta \le 0.00327$,
there exist a dimension  $d = \gO \left( n^{-1/4} \sqrt{\kappa} \log \left(\Delta/\eps\right)\right)$ and $n$ functions $\{f_i:\sR^d\rightarrow\sR\}_{i=1}^n$ where  the $\{f_i\}_{i=1}^n$ are $L$-average smooth and $f$ is $\mu$-strongly convex,  such that $f(\vx_0) - f(\vx^*) = \Delta$. In order to find $\hat{\vx} \in \sR^d$ such that $\E f(\hat{\vx}) - f(\vx^{*}) < \eps$, $\gA$ needs at least $\Omega\left(\left(n {+} n^{3/4} \sqrt{\kappa}\right)\log\left( \Delta/\eps \right)\right)$ queries to $h_f$.
\end{thm}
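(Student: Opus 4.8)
The plan is to instantiate the general framework of Section~\ref{sec:pre} with a careful choice of the parameters $\lambda_0,\lambda_1,\lambda_2,m,\omega$ so that the resulting functions $f_i = r_i(\,\cdot\,;\lambda_0,\lambda_1,\lambda_2,m,\omega)$ have the prescribed average-smoothness and strong-convexity constants, and then to lower bound the suboptimality gap $\min_{\vx\in\gF_M}f(\vx)-\min_{\vx}f(\vx)$ as a function of $M$. First I would recall from Proposition~\ref{prop:base} that with the present choice $\{r_i\}$ is $L'$-average smooth with $L'=2\sqrt{\tfrac{4}{n}[(\lambda_1+\lambda_2)^2+\lambda_1^2]+\lambda_2^2}$ and each $r_i$ is $\lambda_2$-strongly convex; since $f=\tfrac1n\sum r_i$, this makes $f$ at least $\lambda_2$-strongly convex. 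So I would set $\lambda_2=\mu$ and then choose $\lambda_1$ (up to constants $\lambda_1\asymp \sqrt{n}\,L$, since $L'\asymp \lambda_1/\sqrt n$ when $\lambda_1\gg\lambda_2$) so that $L'=L$; the hypothesis $\kappa\ge\sqrt{3/n}(n/2+1)$ is exactly what is needed to guarantee $\lambda_1>0$, i.e.\ that $L$ is large enough relative to $\mu$ for this rescaling to be feasible. The parameter $\omega$ will be taken of order $\sqrt{\mu/\lambda_1}\asymp n^{-1/4}\kappa^{-1/2}$ (so $\omega<\sqrt2$), which controls the contraction rate per coordinate.

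Next I would analyze the quadratic $f(\vx)=\tfrac1n\big(\lambda_1\sum_{l=1}^m\|\vb_l^\top\vx\|^2 + n\lambda_2\|\vx\|^2 - \lambda_0\inner{\ve_m}{\vx}\big)$, which is $\tfrac1n(\lambda_1\,\vx^\top\A(m,\omega)\vx + n\lambda_2\|\vx\|^2 - \lambda_0\inner{\ve_m}{\vx})$. Its minimizer $\vx^*$ solves a tridiagonal linear system whose solution decays geometrically: $x^*_j \asymp \rho^{\,m-j}$ for an explicit ratio $\rho=\rho(\omega,\lambda_1,\lambda_2)$ bounded away from $1$ by roughly $1-c\,\omega$. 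Restricting to $\gF_M=\spn\{\ve_m,\dots,\ve_{m-M+1}\}$ kills the first $m-M$ coordinates of $\vx^*$, and a standard computation (as in~\citet{nesterov2013introductory,lan2017optimal}) gives $\min_{\vx\in\gF_M}f(\vx)-\min_\vx f(\vx) \asymp (\text{const})\cdot\rho^{\,2M}\cdot\|\vx^*\|^2$-scale, while $f(\vx_0)-f(\vx^*)=f(\vzero)-f(\vx^*)=\Delta$ fixes the overall scale $\lambda_0$ (and hence $\Delta$) in terms of the geometric sum. Then I would choose $m$ so that, after the decay, the ratio $\big(\min_{\gF_M}f-\min f\big)/\Delta \ge 9(\eps/\Delta)$ fails to hold only once $M$ exceeds a threshold $M\asymp \tfrac{1}{\omega}\log(\Delta/\eps)\asymp n^{1/4}\sqrt{\kappa}\log(\Delta/\eps)$; the condition $\eps/\Delta\le 0.00327$ is the slack needed to absorb the $9\cdot$ factor and the $O(1)$ constants from the tridiagonal estimates. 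Taking $m\asymp M$ then gives the dimension bound $d=m=\gO(n^{-1/4}\sqrt\kappa\log(\Delta/\eps))$.

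With $M$ in hand, the iteration lower bound is immediate from Lemma~\ref{lem:base}: setting $N=n(M+1)/4$ we get $\min_{t\le N}\E f(\vx_t)-f(\vx^*)\ge\eps$, so $\gA$ needs more than $N=\Omega(nM)=\Omega\big((n+n^{3/4}\sqrt\kappa)\log(\Delta/\eps)\big)$ queries; the leading $\Omega(n)$ term comes from the fact that even to make $\vx_t\ne\vzero$ the algorithm must draw the component $f_1$, i.e.\ $M\ge 1$ already forces $N\ge n/2$. If the target dimension $d$ is small (the regime $\sqrt{\kappa/n}\log(\Delta/\eps)=O(1)$, degenerate here), one pads with dummy coordinates or simply uses $m$ a fixed constant; I would state the argument for $m$ large and remark on the trivial case. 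Finally I would verify the smoothness/strong-convexity bookkeeping: each $f_i$ is $L$-average-smooth as a family (not individually smooth) precisely because the $\lambda_1\sum_{l\in\gL_i}\|\vb_l^\top\vx\|^2$ term touches only a $1/n$ fraction of the tridiagonal blocks, and $f$ is $\mu=\lambda_2$-strongly convex by the $\lambda_2\|\vx\|^2$ terms.

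The main obstacle I anticipate is the explicit estimation of $\min_{\vx\in\gF_M}f(\vx)-\min_\vx f(\vx)$ for the tridiagonal-plus-ridge quadratic: one must pin down the geometric ratio $\rho$ and the prefactors sharply enough that the threshold on $M$ comes out as $\Theta(n^{1/4}\sqrt\kappa\,\log(\Delta/\eps))$ rather than off by a $\log$ or a polynomial factor, and one must simultaneously keep $\omega<\sqrt2$ and $\lambda_1>0$. The role of the hypothesis $\kappa\ge\sqrt{3/n}(n/2+1)$ and of the constant $0.00327$ is entirely to make this computation close; everything else (Corollary~\ref{coro:stopping-time}, Lemma~\ref{lem:geo}, Lemma~\ref{lem:base}) is already supplied by the framework. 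I expect the proof to mirror that of Theorem~\ref{thm:strongly} with $L$ replaced by the average-smoothness constant $L'$, which shifts the $\sqrt\kappa$ scaling to $n^{1/4}\sqrt\kappa$ through the extra $1/\sqrt n$ in $L'$.
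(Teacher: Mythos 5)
Your high-level plan is the right one and matches the paper's: the same $r_i$ family with $\lambda_2\asymp\mu$ and $\lambda_1\asymp\sqrt{n}\,L$ makes $\{f_i\}$ $L$-average-smooth while keeping $f$ $\mu$-strongly convex, and the $\sqrt{n}$ boost in the effective individual smoothness is exactly what converts the $\sqrt{\kappa n}$ scaling into $n^{3/4}\sqrt{\kappa}$. The paper reaches this more economically: it sets the individual smoothness parameter $L_{\mathrm{indiv}}=\sqrt{n(L'^2-\mu^2)/2-\mu^2}\asymp\sqrt{n}\,L'$ (Proposition~\ref{prop:average:example:strongly}), checks $L_{\mathrm{indiv}}/\mu\ge n/2+1$, and then invokes Theorem~\ref{thm:strongly:example} as a black box, so no new tridiagonal computation is needed; your proposal would redo the per-coordinate decay estimate, which is more work but fundamentally the same substitution.

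However, there is a concrete error in your scaling of the decay rate that leaves the proposal internally inconsistent. You take the per-coordinate contraction to be of order $\sqrt{\mu/\lambda_1}$ and deduce $M\asymp(1/\omega)\log(\Delta/\eps)\asymp n^{1/4}\sqrt\kappa\log(\Delta/\eps)$. But the objective is $f=\tfrac1n\sum_i r_i$, so the tridiagonal quadratic enters $f$ with coefficient $\lambda_1/n$, not $\lambda_1$; the effective ``condition number'' governing the geometric ratio $q=(\alpha-1)/(\alpha+1)$ is $\alpha\asymp\sqrt{\lambda_1/(n\lambda_2)}$ (equivalently $1-q\asymp\sqrt{n\lambda_2/\lambda_1}$, and $\omega^2=2/(\alpha+1)$, so $M\asymp 1/\omega^2$, not $1/\omega$). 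With $\lambda_1\asymp\sqrt{n}L$ and $\lambda_2\asymp\mu$ this gives $M\asymp\alpha\log(\Delta/\eps)\asymp n^{-1/4}\sqrt\kappa\log(\Delta/\eps)$. Your stated $M\asymp n^{1/4}\sqrt\kappa\log$ is off by $\sqrt n$, which contradicts your own subsequent claims that $d=m\asymp M=\gO(n^{-1/4}\sqrt\kappa\log(\Delta/\eps))$ and that $N=nM/4=\Omega(n^{3/4}\sqrt\kappa\log(\Delta/\eps))$: with your $M$, those would read $\gO(n^{1/4}\sqrt\kappa\log)$ and $\Omega(n^{5/4}\sqrt\kappa\log)$. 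The hypothesis $\kappa\ge\sqrt{3/n}(n/2+1)$ also does more than ensure $\lambda_1>0$; it is what guarantees $L_{\mathrm{indiv}}/\mu\ge n/2+1$ so that Theorem~\ref{thm:strongly:example} applies. Once the $1/n$ averaging factor is inserted and $M$ is tied to $\alpha$ (not $1/\omega$), the argument closes exactly as you envision.
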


\begin{remark}
Compared with \citeauthor{zhou2019lower}'s lower bound $\Omega\left(n + n^{3/4} \sqrt{\kappa}\log\left( \Delta/\eps \right)\right)$ for IFO algorithms,
 Theorem \ref{thm:average:strongly} shows tighter dependency on $n$ and supports PIFO algorithms additionally.
\end{remark}

We also give the lower bound for general convex case under the $L$-average smooth condition.
\begin{thm}\label{thm:average:convex}
For any PIFO algorithm $\gA$ and any $L, n, B, \eps$ such that
$\eps \le L B^2 /4 $,
there exist a dimension $d = \gO \left( 1 + B n^{-1/4} \sqrt{L/\eps} \right)$ and $n$ functions $\{f_i:\sR^d\rightarrow\sR\}_{i=1}^n$ which the  $\{f_i\}_{i=1}^n$ are $L$-average smooth and $f$ is convex, such that $\norm{\vx_0 - \vx^*} \le B$. In order to find $\hat{\vx} \in \sR^d$ such that $\E f(\hat{\vx}) - f(\vx^{*}) < \eps$, $\gA$ needs at least $\Omega\left(n + B n^{3/4} \sqrt{L/\eps} \right)$ queries to $h_f$.
\end{thm}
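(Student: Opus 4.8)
The plan is to run the framework of Section~\ref{sec:pre} with the non-strongly-convex tuning $\lambda_2 = 0$ and a small $\omega > 0$, paralleling the intended proof of Theorem~\ref{thm:convex}; the one new point is to control the components' regularity through the \emph{average}-smoothness coefficient of Proposition~\ref{prop:base} instead of the per-component smoothness. Take $f_i(\vx) = r_i(\vx;\lambda_0,\lambda_1,0,m,\omega)$, so that each $f_i$ and hence $f$ is convex. By Proposition~\ref{prop:base} (with $\lambda_2 = 0$) the family $\{f_i\}_{i=1}^n$ is $L'$-average smooth with $L' = \tfrac{4\sqrt2}{\sqrt n}\lambda_1$, so requiring $L' \le L$ lets us take $\lambda_1 = \tfrac{\sqrt n}{4\sqrt2}L$ — a factor $\Theta(\sqrt n)$ larger than what the same computation allows in Theorem~\ref{thm:convex}, and this is exactly the source of the improvement from $\sqrt{nL/\eps}$ to $n^{3/4}\sqrt{L/\eps}$. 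With $\lambda_2 = 0$ the objective is the pure quadratic $f(\vx) = \tfrac{\lambda_1}{n}\vx^\top\A(m,\omega)\vx - \tfrac{\lambda_0}{n}\inner{\ve_m}{\vx}$ with unique minimizer $\vx^* = \tfrac{\lambda_0}{2\lambda_1}\A(m,\omega)^{-1}\ve_m \in \gF_m$ and $\vx_0 = \vzero$.

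Next I would carry out the two estimates that drive everything. From the explicit structure of $\A(m,\omega)$ — $\A(m,\omega)^{-1}\ve_m$ is affine on the interior, the $\omega^2$-entry acting as a soft boundary condition at the first coordinate — one gets $\norm{\vx_0-\vx^*} = \norm{\vx^*}$ in closed form, of order $\tfrac{\lambda_0}{\lambda_1}m^{3/2}$ once $\omega$ is of order $m^{-1/2}$; impose $\norm{\vx^*}\le B$. Using the ``jump'' structure of Lemma~\ref{lem:jump} (recall $\gF_M$ is the span of the last $M$ coordinates, where the linear term lives), the restricted problem $\min_{\vx\in\gF_M}f(\vx)$ is again affine and yields $\min_{\vx\in\gF_M}f(\vx) - f(\vx^*)$ of order $\tfrac{\lambda_1}{n}(\tfrac{\lambda_0}{\lambda_1})^2(m-M)$ — the classical Nesterov-type estimate. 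Taking $M = \Theta(m)$, requiring this gap to be at least $9\eps$, and combining with $L'\le L$ and $\norm{\vx^*}\le B$, elementary algebra fixes $m = \Theta\bigl(Bn^{-1/4}\sqrt{L/\eps}\bigr)$, i.e.\ the advertised dimension $d = m$, with $\omega = \Theta(m^{-1/2}) = \Theta\bigl(n^{1/4}\sqrt\eps/(B\sqrt L)\bigr)$ balancing the boundary term against $m$ and satisfying $\omega < \sqrt2$ in this regime.

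Finally, feeding $M$ into Lemma~\ref{lem:base}: since $\min_{\vx\in\gF_M}f(\vx) - \min_{\vx\in\sR^m}f(\vx)\ge 9\eps$, with $N = n(M+1)/4$ we get $\min_{t\le N}\E f(\vx_t) - f(\vx^*)\ge\eps$, so $\gA$ needs more than $N = \Omega(nM) = \Omega\bigl(n + Bn^{3/4}\sqrt{L/\eps}\bigr)$ calls to $h_f$ — the additive $n$ coming from $N\ge n/2$, i.e.\ from $T_1$ being geometric with success probability $p_1\le 1/n$ (Corollary~\ref{coro:stopping-time}, Lemma~\ref{lem:geo}). I expect the main obstacle to be purely computational but delicate: extracting estimates of $\norm{\vx^*}$ and of the restricted gap from $\A(m,\omega)$ that are sharp (constants matter, since $m$ feeds directly into both $d$ and the query bound) and then choosing $(\lambda_0,\lambda_1,\omega,m,M)$ so that ``$L'\le L$'', ``$\norm{\vx^*}\le B$'', ``gap $\ge 9\eps$'', ``$\omega<\sqrt2$'' and ``$d$ minimal'' hold simultaneously. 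A secondary, minor point: when $\eps$ is large enough that the forced $M$ would drop below $1$ — equivalently $Bn^{3/4}\sqrt{L/\eps}\lesssim n$ — only the $\Omega(n)$ summand is being asserted, and that follows from a separate elementary construction (e.g.\ $f_i(\vx) = \tfrac L2\norm{\vx - b_i\ve_1}^2$ with calibrated signs $b_i$), where an algorithm leaving any component unexamined cannot locate $\vx^*$ within $\eps$; this covers the whole assumed range $\eps\le LB^2/4$.
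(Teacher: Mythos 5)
Your proposal is correct and follows the paper's own route: the key ingredient is the $\sqrt{n}$ gap in Proposition~\ref{prop:base} between the per-component smoothness constant $4\lambda_1$ and the average-smoothness constant $\Theta(\lambda_1/\sqrt{n})$ when $\lambda_2=0$, which lets you take $\lambda_1 = \Theta(\sqrt{n}L)$ and inflates the dimension to $m = \Theta(Bn^{-1/4}\sqrt{L/\eps})$, then Lemma~\ref{lem:base} gives $N = \Omega(nm)$. One simplification you missed: the paper keeps $\omega = 1$ (i.e.\ reuses Definition~\ref{defn:c} verbatim) and simply invokes Theorem~\ref{thm:convex:example} with $L$ replaced by $\sqrt{n/2}\,L'$; your choice $\omega = \Theta(m^{-1/2})$ is an unneeded complication, since $\vx^* = \tfrac{\lambda_0}{2\lambda_1}\left(1/\omega^2 - 1 + k\right)_{k=1}^m$ and the boundary term $1/\omega^2 - 1$ vanishes exactly at $\omega = 1$, giving the clean minimizer $\tfrac{\lambda_0}{2\lambda_1}(1,\dots,m)^{\top}$ with nothing to balance.
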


\begin{remark}
The lower bound in Theorem \ref{thm:average:convex} is comparable to the one of \citeauthor{zhou2019lower}'s result, but our construction only requires the dimension be $\gO \left( 1 + B n^{-1/4} \sqrt{L/\eps} \right)$, which is much smaller than 
$\gO \left(n+Bn^{3/4}\sqrt{L/\eps}\right) $ in \citep{zhou2019lower}.
\end{remark}

\section{Constructions in Proof of Main Theorems}
We demonstrate the detailed constructions for PIFO lower bounds in this section. 
All the omitted proof in this section can be found in Appendix for a detailed version.
\subsection{Strongly Convex Case}\label{sec:strongly}

The analysis of lower bound complexity for the strongly-convex case depends on the following construction.

\begin{defn}\label{defn:sc}
For fixed $L, \mu, \Delta, n$, 
let $\alpha = \sqrt{ \frac{2(L/\mu - 1)}{n} + 1}$.
We define $f_{\text{SC}, i} : \sR^m \rightarrow \sR$ as follows
\begin{align}
    f_{\text{SC}, i} (\vx) = r_i\left(\vx; \sqrt{\frac{2(L - \mu)n\Delta}{\alpha - 1}}, \frac{L-\mu}{4}, \frac{\mu}{2}, m, \sqrt{\frac{2}{\alpha + 1}} \right), \text{ for } 1 \le i \le n,
\end{align}
and 
\begin{align*}
    F_{\text{SC}}(\vx) \triangleq \frac{1}{n} \sum_{i=1}^n f_{\text{SC}, i}(\vx) 
    = \frac{L-\mu}{4n} \norm{\B\left(m, \sqrt{\frac{2}{\alpha + 1}}\right) \vx}^2 + \frac{\mu}{2} \norm{\vx}^2 
    - \sqrt{\frac{2(L-\mu)\Delta}{n(\alpha-1)}}\inner{\ve_m}{\vx}.
\end{align*}
\end{defn}

\begin{prop}\label{prop:strongly}
For any $n \ge 2$, $m \ge 2$, $f_{\text{SC}, i}$ and $F_{\text{SC}}$ in Definition \ref{defn:sc} satisfy:
\begin{enumerate}
    \item $f_{\text{SC}, i}$ is $L$-smooth and $\mu$-strongly convex. \label{scp1}
    \item The minimizer of the function $F_{\text{SC}}$ is
    $$\vx^{*} = \argmin_{\vx \in \sR^m}F_{\text{SC}}(\vx) = \sqrt{\frac{2\Delta n(\alpha+1)^2}{(L-\mu)(\alpha-1)}} (q^{m}, q^{m-1}, \cdots, q)^{\top},$$ 
    where $q = \frac{\alpha - 1}{\alpha + 1}$.
    Moreover, $F_{\text{SC}}(\vx^*) = -\Delta$. \label{scp2}
    \item For $1 \le k \le m - 1$, we have
    \begin{align}\label{prop:strongly:3}
    \min_{\vx \in \gF_k} F_{\text{SC}}(\vx) - F_{\text{SC}}(\vx^{*}) \ge \Delta q^{2k}. 
    \end{align}\label{scp3}
\end{enumerate}
\end{prop}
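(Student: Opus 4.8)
The three claims about $f_{\text{SC},i}$ and $F_{\text{SC}}$ are all instances of facts that should follow by plugging the specific parameter choices into the general machinery of Section~\ref{sec:pre}. The first would be a direct application of Proposition~\ref{prop:base}: with $\lambda_1 = (L-\mu)/4$ and $\lambda_2 = \mu/2$, that proposition gives smoothness constant $4\lambda_1 + 2\lambda_2 = (L-\mu) + \mu = L$ and strong-convexity constant $\lambda_2 = \mu/2$ — wait, I need $\mu$-strong convexity, so I should double-check: actually the quadratic part contributes $\lambda_2\norm{\vx}^2 = \tfrac{\mu}{2}\norm{\vx}^2$, which is exactly $\mu$-strongly convex, and $\lambda_2$ in the normalization of Proposition~\ref{prop:base} is the coefficient, so strong convexity is $\mu$, consistent. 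I also need $\omega = \sqrt{2/(\alpha+1)} < \sqrt 2$, which holds since $\alpha > 0$; so Proposition~\ref{prop:base} applies verbatim and gives claim~\ref{scp1}.

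**The minimizer and its value.** For claim~\ref{scp2} I would compute $\nabla F_{\text{SC}}(\vx) = 0$ directly. Writing $F_{\text{SC}}(\vx) = \tfrac{L-\mu}{4n}\vx^\top \A(m,\omega)\vx + \tfrac{\mu}{2}\norm{\vx}^2 - c\inner{\ve_m}{\vx}$ with $c = \sqrt{2(L-\mu)\Delta/(n(\alpha-1))}$, the optimality condition is $\big(\tfrac{L-\mu}{2n}\A(m,\omega) + \mu\I\big)\vx^* = c\,\ve_m$. Using $\alpha = \sqrt{2(L/\mu-1)/n + 1}$ one has $\tfrac{L-\mu}{2n} = \tfrac{\mu(\alpha^2-1)}{4}$, so the linear system becomes $\big(\tfrac{\alpha^2-1}{4}\A(m,\omega) + \I\big)\vx^* = (c/\mu)\ve_m$. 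The plan is to verify that the geometric vector $\vx^* = C(q^m,\dots,q)^\top$ with $q = (\alpha-1)/(\alpha+1)$ solves this tridiagonal system: the interior rows give a recurrence $-q^{j+1} + (\text{diagonal})q^j - q^{j-1}$ that vanishes precisely when $q + 1/q = $ the diagonal value, i.e. when $q$ satisfies $q^2 - (\tfrac{4}{\alpha^2-1}+2)q + 1 = 0$, and $(\alpha-1)/(\alpha+1)$ is indeed the root in $(0,1)$; the last row (with the $1$ instead of $2$ and no $\omega^2$) and the first row (with $\omega^2+1$) must be checked separately, and the choice $\omega^2 = 2/(\alpha+1)$ is exactly what makes the first row consistent. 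Then $C$ is pinned down by the bottom coordinate, and finally $F_{\text{SC}}(\vx^*) = -\tfrac12 c\inner{\ve_m}{\vx^*} = -\tfrac12 c\, C q$ should simplify to $-\Delta$ after substituting $c$ and $C$.

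**The gap on $\gF_k$.** For claim~\ref{scp3} the key observation is that $\gF_k = \spn\{\ve_m,\dots,\ve_{m-k+1}\}$ consists of vectors supported on the \emph{last} $k$ coordinates, whereas $\vx^*$ has its largest coordinates in the \emph{first} positions (the coordinate $q^{j}$ is large for small index — careful with the indexing convention). Restricting $F_{\text{SC}}$ to $\gF_k$ and minimizing gives a quantity controlled by the tail $\sum_{j \le m-k}(q^j)^2$-type mass of $\vx^*$, which is a geometric sum $\asymp q^{2(m-k)}$ times the curvature; re-indexing, this is $\Delta q^{2k}$ up to the constants, and one checks the inequality is in the stated direction. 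Concretely I would use strong convexity: $\min_{\vx\in\gF_k}F_{\text{SC}}(\vx) - F_{\text{SC}}(\vx^*) \ge \tfrac{\mu}{2}\,\mathrm{dist}(\vx^*,\gF_k)^2$ is too weak because the relevant curvature is $\mu$ not larger — actually this simple bound may already suffice since $\mathrm{dist}(\vx^*,\gF_k)^2 = C^2\sum_{j=k+1}^{m} q^{2j}$ (the coordinates \emph{not} in $\gF_k$), and $C^2 \asymp \Delta n(\alpha+1)^2/((L-\mu)(\alpha-1)) \asymp \Delta/\mu$ up to factors coming from $\alpha$, giving $\tfrac\mu2 C^2 q^{2(k+1)}/(1-q^2) \ge \Delta q^{2k}$ after bookkeeping. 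The main obstacle I anticipate is precisely this constant-chasing: making sure the accumulated factors of $\alpha$, $(\alpha\pm1)$, $q$, and $n$ in $C^2$, in the curvature, and in the geometric tail combine to give exactly $\Delta q^{2k}$ with the right inequality direction, rather than an off-by-constant bound — in particular whether the crude strong-convexity estimate is tight enough or whether one must instead compute $\min_{\vx\in\gF_k}F_{\text{SC}}(\vx)$ exactly by solving the analogous smaller tridiagonal system on the last $k$ coordinates. I would first try the exact computation, since the block structure of $\A(m,\omega)$ restricted to $\gF_k$ is again of the same tridiagonal form (now with a genuine $2$ on what was the cut diagonal, since the coupling to coordinate $m-k$ is severed), so the same geometric-sequence ansatz applies and yields a clean closed form.
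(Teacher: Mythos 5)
Your plan follows the paper's own proof essentially step for step: item 1 is Proposition \ref{prop:base} with $\lambda_1=(L-\mu)/4$, $\lambda_2=\mu/2$ (and your resolution of the $\mu$- versus $\mu/2$-strong-convexity point is the right one, since the function explicitly contains $\tfrac{\mu}{2}\norm{\vx}^2$); item 2 is exactly the geometric-ansatz verification, with the interior rows forcing $q+1/q=2+\tfrac{2n\mu}{L-\mu}$, the choice $\omega^2=2/(\alpha+1)$ making the first row consistent, the last row fixing the scale, and $F_{\text{SC}}(\vx^*)=-\tfrac12 c\inner{\ve_m}{\vx^*}=-\Delta$; and item 3 is settled, as in the paper, by solving the restricted tridiagonal system on the last $k$ coordinates exactly, which gives $\min_{\vx\in\gF_k}F_{\text{SC}}(\vx)-F_{\text{SC}}(\vx^*)=\Delta q^{2k}\tfrac{1+q}{1+q^{2k+1}}\ge\Delta q^{2k}$. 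Two small caveats: your suspicion about the crude strong-convexity fallback is justified---it only yields $\Delta q^{2k}\cdot\tfrac{\alpha+1}{2\alpha}\bigl(1-q^{2(m-k)}\bigr)$, which loses a constant factor and does not give the stated bound---and the restricted minimizer is not a single geometric sequence but a combination of the two characteristic roots, namely $\tfrac{\xi q^{k+1}}{1+q^{2k+1}}\bigl(q^{-1}-q,\dots,q^{-k}-q^{k}\bigr)^{\top}$, because severing the coupling changes the top boundary condition; so the exact solve must use the general solution of the three-term recurrence, after which the computation closes as you outline.
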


Note that the $f_{\text{SC}, i}$ are $L$-smooth and $\mu$-strongly convex, and $F_{\text{SC}}(\vx_0) - F_{\text{SC}} (\vx^*) = \Delta$.
Next we show that the functions $\{f_{\text{SC}, i}\}_{i=1}^n$ are ``hard enough'' for any PIFO algorithm $\gA$, and deduce the conclusion of Theorem \ref{thm:strongly}.

\begin{thm}\label{thm:strongly:example}
Suppose that
\begin{align*}
    \frac{L}{\mu} \ge \frac{n}{2} + 1, ~ \eps \le \frac{\Delta}{9} \left(\frac{\sqrt{2}-1}{\sqrt{2}+1}\right)^{2}, \text{ and } 
    m = \frac{1}{4}\left(\sqrt{2\frac{L/\mu - 1}{n} + 1}\right) \log \left(\frac{\Delta}{9\eps}\right) + 1.
\end{align*}
In order to find $\hat{\vx} \in \sR^m$ such that $\E F_{\text{SC}}(\hat{\vx}) - F_{\text{SC}}(\vx^*) < \eps$, PIFO algorithm $\gA$ needs at least $\Omega\left(\left(n + \sqrt{\frac{nL}{\mu}}\right)\log\left( \frac{\Delta}{\eps} \right)\right)$ queries to $h_{F_{\text{SC}}}$.
\end{thm}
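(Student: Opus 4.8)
My plan is to feed the family $\{f_{\text{SC},i}\}$ of Definition~\ref{defn:sc} into Lemma~\ref{lem:base}, choosing the free index $M$ as large as the hypothesis on $\eps/\Delta$ permits. Throughout write $\kappa=L/\mu$, $\alpha=\sqrt{2(\kappa-1)/n+1}$, $q=\frac{\alpha-1}{\alpha+1}$, $q_0=\frac{\sqrt2-1}{\sqrt2+1}$ and $D=\log(\Delta/(9\eps))$, so that the prescribed dimension is $m=\frac{\alpha}{4}D+1$ (rounded up to an integer if needed). The condition $\kappa\ge n/2+1$ forces $2(\kappa-1)/n\ge1$, hence $\alpha\ge\sqrt2$ and $q\ge q_0$, and the condition $\eps\le\frac{\Delta}{9}\left(\frac{\sqrt2-1}{\sqrt2+1}\right)^2$ is exactly $D\ge 2\log(1/q_0)$. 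Proposition~\ref{prop:strongly} already supplies everything structural: each $f_{\text{SC},i}$ is $L$-smooth and $\mu$-strongly convex, $F_{\text{SC}}(\vx_0)-F_{\text{SC}}(\vx^*)=\Delta$ (since $F_{\text{SC}}(\vzero)=0$ and $F_{\text{SC}}(\vx^*)=-\Delta$), and $\min_{\vx\in\gF_k}F_{\text{SC}}(\vx)-F_{\text{SC}}(\vx^*)\ge\Delta q^{2k}$ for $1\le k\le m-1$ by \eqref{prop:strongly:3}. So the whole proof reduces to picking $M$ and counting oracle calls.

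First I would set $M=\big\lfloor D/(2\log(1/q))\big\rfloor$ and verify three facts. (i) $\Delta q^{2M}\ge 9\eps$, immediate from $2M\log(1/q)\le D$. (ii) $M\ge 1$: since $q\mapsto\log(1/q)$ is decreasing and $q\ge q_0$, we get $D/(2\log(1/q))\ge D/(2\log(1/q_0))\ge1$ using the hypothesis on $\eps/\Delta$; this is exactly where the numerical threshold is consumed. (iii) $M\le m-1$: the elementary bound $\log\frac{\alpha+1}{\alpha-1}\ge 2/\alpha$ (convexity of $t\mapsto1/t$ on $[\alpha-1,\alpha+1]$) gives $D/(2\log(1/q))\le\alpha D/4=m-1$. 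With $1\le M\le m-1$, \eqref{prop:strongly:3} applies at $k=M$ and yields $\min_{\vx\in\gF_M}F_{\text{SC}}(\vx)-F_{\text{SC}}(\vx^*)\ge\Delta q^{2M}\ge 9\eps$, i.e.\ precisely the hypothesis of Lemma~\ref{lem:base}.

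Applying Lemma~\ref{lem:base} with this $M$ and $N=n(M+1)/4$ then gives $\min_{t\le N}\E F_{\text{SC}}(\vx_t)-F_{\text{SC}}(\vx^*)\ge\eps$, so the iterate produced after any $t\le N$ PIFO queries is not $\eps$-suboptimal in expectation; hence $\gA$ needs strictly more than $N$ calls to $h_{F_{\text{SC}}}$. It remains to show $N=\Omega\big((n+\sqrt{n\kappa})\log(\Delta/\eps)\big)$. The $n\log(1/\eps)$ term: $M+1\ge D/(2\log(1/q))\ge D/(2\log(1/q_0))=\Omega(D)$, and $D=\Omega(\log(\Delta/\eps))$ because $\eps/\Delta\le0.00327$ keeps $\log(\Delta/\eps)$ bounded away from $\log 9$, so $N=\Omega(nD)$. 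The $\sqrt{n\kappa}$ term: the companion (trapezoid) estimate $\log\frac{\alpha+1}{\alpha-1}\le\frac{2\alpha}{\alpha^2-1}$ gives $M+1\ge(\alpha^2-1)D/(4\alpha)$, and since $\alpha^2-1=2(\kappa-1)/n$ and $\alpha\le 2\sqrt{(\kappa-1)/n}$ (again from $2(\kappa-1)/n\ge1$) one gets $n(\alpha^2-1)/\alpha\ge\sqrt{n(\kappa-1)}=\Omega(\sqrt{n\kappa})$, hence $N=n(M+1)/4=\Omega(\sqrt{n\kappa}\,D)$. Combining the two bounds finishes the proof.

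The work here is assembly and bookkeeping; Proposition~\ref{prop:strongly} and Lemmas~\ref{lem:jump}--\ref{lem:base} do the real work. The only delicate point will be the choice of $M$: securing $M\ge1$ is exactly where the explicit threshold $\eps/\Delta\le\frac19\left(\frac{\sqrt2-1}{\sqrt2+1}\right)^2$ is needed, and making the resulting $N=n(M+1)/4$ dominate both $nD$ and $\sqrt{n\kappa}\,D$ hinges on the two-sided estimate $2/\alpha\le\log\frac{\alpha+1}{\alpha-1}\le 2\alpha/(\alpha^2-1)$ valid for $\alpha\ge\sqrt2$, together with the identity $\alpha^2-1=2(L/\mu-1)/n$.
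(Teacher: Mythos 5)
Your proposal follows the paper's own proof of Theorem~\ref{thm:strongly:example} step for step: you set $M=\lfloor\log(\Delta/(9\eps))/(2\log(1/q))\rfloor$, invoke Proposition~\ref{prop:strongly}(\ref{scp3}) to meet the hypothesis of Lemma~\ref{lem:base} with the exact same $N=n(M+1)/4$, and verify $1\le M\le m-1$. The only cosmetic difference is in how $\log(1/q)=\log\frac{\alpha+1}{\alpha-1}$ is controlled: the paper invokes the monotonicity of the auxiliary function $h(\beta)=\tfrac{1}{\log((\beta+1)/(\beta-1))}-\tfrac{\beta}{2}$, whereas you use the midpoint and trapezoid estimates $\tfrac{2}{\alpha}\le\log\frac{\alpha+1}{\alpha-1}\le\tfrac{2\alpha}{\alpha^2-1}$, which is slightly more self-contained but serves the identical purpose.
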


\begin{proof}
Let $M = \floor{\frac{\log (9\eps/\Delta)}{2\log q}}$, then we have 
\begin{align*}
    \argmin_{\vx \in \gF_M} F_{\text{SC}}(\vx) - F_{\text{SC}}(\vx^*) \ge \Delta q^{2M} \ge 9 \eps,
\end{align*}
where the first inequality is according to the third property of Proposition \ref{prop:strongly}.

Following from Lemma \ref{lem:base}, for $M \ge 1$ and $N = (M+1)n/4$, we have
\begin{align*}
    \min_{t \le N} \E F_{\text{SC}}(\vx_t) - F_{\text{SC}}(\vx^*) \ge \eps.
\end{align*}
Therefore, in order to find $\hat{\vx} \in \sR^m$ such that $\E F_{\text{SC}}(\hat{\vx}) - F_{\text{SC}}(\vx^{*}) < \eps$, $\gA$ needs at least $N$ queries to $h_{F_{\text{SC}}}$.

Next, observe that function $h(\beta) = \frac{1}{\log \left(\frac{\beta + 1}{\beta - 1}\right)} - \frac{\beta}{2}$ is increasing when $\beta>1$
and $L/\mu \ge n/2 + 1$, $\alpha = \sqrt{2 \frac{L/\mu - 1}{n} + 1} \ge \sqrt{2}$.
Thus, we have
\begin{align*}
    -\frac{1}{\log(q)} &= \frac{1}{\log \left(\frac{\alpha + 1}{\alpha - 1}\right)} 
    \ge \frac{\alpha}{2} + h(\sqrt{2}) \\
    &= \frac{1}{2}\sqrt{2 \frac{L/\mu - 1}{n} + 1} + h(\sqrt{2}) \\
    &\ge \frac{\sqrt{2}}{4} \left( \sqrt{2\frac{L/\mu - 1}{n}} + 1 \right) + h(\sqrt{2}) \\
    &\ge \frac{1}{2} \sqrt{\frac{L/\mu - 1}{n}} + \frac{\sqrt{2}}{4} + h(\sqrt{2}),
\end{align*}
and 
\begin{align*}
    N &= (M+1)n/4 = \frac{n}{4} \left(\floor{\frac{\log (9\eps/\Delta)}{2\log q}} + 1\right) \\
    &\ge \frac{n}{8} \left( -\frac{1}{\log(q)} \right) \log\left( \frac{\Delta}{9\eps} \right) \\
    &\ge \frac{n}{8}\left( \frac{1}{2} \sqrt{\frac{L/\mu - 1}{n}} + \frac{\sqrt{2}}{4} + h(\sqrt{2})  \right) \log\left( \frac{\Delta}{9\eps} \right) \\
    &= \Omega\left( \left( n + \sqrt{\frac{n L}{\mu}} \right) \log\left( \frac{\Delta}{9\eps} \right) \right)
\end{align*}

At last, we must to ensure that $1 \le M < m$, that is
\begin{align}
    1 \le \frac{\log (9\eps/\Delta)}{2\log q} < m.
\end{align}
Note that $\lim_{\beta \rightarrow +\infty} h(\beta) = 0$, so $- 1/ \log(q) \le \alpha /2$.
Thus the above conditions are satisfied when 
\begin{align*}
    m = \frac{\log (\Delta / (9\eps))}{2 (-\log q)} + 1 \le
    \frac{1}{4}\left(\sqrt{2\frac{L/\mu - 1}{n} + 1}\right) \log \left(\frac{\Delta}{9\eps}\right) + 1
    = \gO \left( \sqrt{\frac{L}{n\mu}} \log \left(\frac{\Delta}{\eps}\right)\right),
\end{align*}
and 
\begin{align*}
    \frac{\eps}{\Delta} \le \frac{1}{9} \left(\frac{\sqrt{2}-1}{\sqrt{2}+1}\right)^{2} \le \frac{1}{9} \left(\frac{\alpha-1}{\alpha+1}\right)^{2}.
\end{align*}

\end{proof}

\citet{defazio2016simple} showed that the PIFO algorithm Point SAGA has the convergence result $\E \norm{\vx_t - \vx^*}^2 \le (q')^t\norm{\vx_0 - \vx^*}$, 
where $q'$ satisfies $-1/\log(q') = \gO \left(n + \sqrt{n L/\mu}\right)$.
To match this form of upper bound, we point out that a similar scheme of lower bound holds for $\{f_{\text{SC}, i}\}_{i=1}^n$.

\begin{thm}\label{thm:strongly:example:2}
Suppose that
\begin{align*}
    \frac{L}{\mu} \ge \frac{n}{2} + 1, ~ \eps \le \frac{1}{18} \left(\frac{\sqrt{2}-1}{\sqrt{2}+1}\right)^{2}, \text{ and } 
    m = \frac{1}{2}\left(\sqrt{2\frac{L/\mu - 1}{n} + 1}\right) \log \left(\frac{1}{18\eps}\right) + 1.
\end{align*}
In order to find $\hat{\vx} \in \sR^m$ such that $\E \norm{\hat{\vx} - \vx^*}^2< \eps\norm{\vx_0 - \vx^*}^2$, PIFO algorithm $\gA$ needs at least $\Omega\left(\left(n + \sqrt{\frac{nL}{\mu}}\right)\log\left( \frac{1}{\eps} \right)\right)$ queries to $h_{F_{\text{SC}}}$.
\end{thm}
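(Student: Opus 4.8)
The plan is to re-run the proof of Theorem~\ref{thm:strongly:example} essentially verbatim on the same hard instance $\{f_{\text{SC},i}\}_{i=1}^n$ of Definition~\ref{defn:sc}, simply replacing the function-value gap $F_{\text{SC}}(\vx)-F_{\text{SC}}(\vx^*)$ by the squared distance $\norm{\vx-\vx^*}^2$ throughout. Corollary~\ref{coro:stopping-time} and Lemma~\ref{lem:geo} are used unchanged, and the only two ingredients that need a new version are the ``hardness of the subspace $\gF_k$'' bound (the distance analogue of the third item of Proposition~\ref{prop:strongly}) and the reduction step (the distance analogue of Lemma~\ref{lem:base}).

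For the distance bound, recall from Proposition~\ref{prop:strongly} that $\vx^*=C\,(q^m,q^{m-1},\dots,q)^{\top}$ with $q=\frac{\alpha-1}{\alpha+1}$ and an explicit constant $C>0$, while $\vx_0=\vzero$. Since $\gF_k=\spn\{\ve_m,\dots,\ve_{m-k+1}\}$ is a coordinate subspace, the point of $\gF_k$ closest to $\vx^*$ is obtained by zeroing the first $m-k$ coordinates, so $\min_{\vx\in\gF_k}\norm{\vx-\vx^*}^2=C^2\sum_{i=k+1}^m q^{2i}$ and $\norm{\vx_0-\vx^*}^2=C^2\sum_{i=1}^m q^{2i}$; dividing these two geometric sums gives the exact identity
\begin{align*}
\min_{\vx\in\gF_k}\norm{\vx-\vx^*}^2=\frac{q^{2k}-q^{2m}}{1-q^{2m}}\,\norm{\vx_0-\vx^*}^2\;\ge\;\big(q^{2k}-q^{2m}\big)\norm{\vx_0-\vx^*}^2,\qquad 1\le k\le m .
\end{align*}
This step is actually \emph{simpler} than the function-value analogue, since no inversion of the quadratic form is needed.

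For the reduction, I would mimic the proof of Lemma~\ref{lem:base} with $f(\vx_t)-f^*\ge 0$ replaced by $\norm{\vx_t-\vx^*}^2\ge 0$: for $M\ge 1$ and $N=n(M+1)/4$, conditioning on $\{N<T_{M+1}\}$ (on which $\vx_t\in\gF_M$ for every $t\le N$, by Corollary~\ref{coro:stopping-time}) and using $\sum_{l=1}^{M+1}q_l\le (M+1)/n$ together with Lemma~\ref{lem:geo} gives $\pr{T_{M+1}>N}\ge 1-\tfrac{16}{9(M+1)}\ge\tfrac19$, hence $\min_{t\le N}\E\norm{\vx_t-\vx^*}^2\ge\tfrac19(q^{2M}-q^{2m})\norm{\vx_0-\vx^*}^2$. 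Taking $M=\floor{\log(1/(18\eps))/(-2\log q)}$ makes $q^{2M}\ge 18\eps$; the conditions $1\le M\le m-1$ and $q^{2m}\le q^{2(m-M)}\le 18\eps$ then follow from $\eps\le\tfrac1{18}\big(\tfrac{\sqrt2-1}{\sqrt2+1}\big)^2$, from $q\ge\frac{\sqrt2-1}{\sqrt2+1}$ (since $\alpha\ge\sqrt2$), and from $-1/\log q\le\alpha/2$ (equivalently $q^{\alpha/2}\le e^{-1}$) applied to the stated $m=\tfrac12\alpha\log(1/(18\eps))+1$, which forces $m-M\ge\tfrac{\alpha}{4}\log(1/(18\eps))+1$. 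This yields $q^{2M}-q^{2m}\ge 18\eps(1-18\eps)\ge 9\eps$, so $\min_{t\le N}\E\norm{\vx_t-\vx^*}^2\ge\eps\norm{\vx_0-\vx^*}^2$ and at least $N$ queries to $h_{F_{\text{SC}}}$ are needed to reach $\E\norm{\hat\vx-\vx^*}^2<\eps\norm{\vx_0-\vx^*}^2$; finally $N=(M+1)n/4=\Omega\big((n+\sqrt{nL/\mu})\log(1/\eps)\big)$ by the same chain of inequalities ending the proof of Theorem~\ref{thm:strongly:example}, which lower-bounds $-1/\log q=1/\log\tfrac{\alpha+1}{\alpha-1}$ by $\tfrac12\sqrt{(L/\mu-1)/n}$ plus a positive constant via monotonicity of $\beta\mapsto\tfrac{1}{\log((\beta+1)/(\beta-1))}-\tfrac{\beta}{2}$.

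The main obstacle is purely in making the constants cohere. The distance gap $q^{2M}-q^{2m}$ carries the tail term $-q^{2m}$ that has no counterpart in the function-value gap $\Delta q^{2M}$, and downstream one loses both the $\tfrac19$ from Lemma~\ref{lem:geo} and a factor $(1-18\eps)$ from this tail term; controlling $q^{2m}\le\tfrac12 q^{2M}$ forces the number of ``jumps'' $m-M$ to be $\Omega(\alpha\log(1/\eps))$, which is precisely why $m$ is taken about twice as large as in Theorem~\ref{thm:strongly:example} and why the admissible range becomes $\eps\le\tfrac1{18}\big(\tfrac{\sqrt2-1}{\sqrt2+1}\big)^2$ rather than $\tfrac{\Delta}{9}\big(\tfrac{\sqrt2-1}{\sqrt2+1}\big)^2$. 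Beyond this bookkeeping, the argument is a transcription of the strongly-convex case already in hand.
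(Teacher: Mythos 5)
Your proposal is correct and follows essentially the same route as the paper's own proof: same hard instance, same use of Corollary~\ref{coro:stopping-time} and Lemma~\ref{lem:geo}, the same closed-form computation of $\min_{\vx\in\gF_k}\norm{\vx-\vx^*}^2/\norm{\vx_0-\vx^*}^2=\frac{q^{2k}-q^{2m}}{1-q^{2m}}$, and the same choice of $M$. The only difference is cosmetic: the paper keeps the denominator and shows $\frac{q^{2M}-q^{2m}}{1-q^{2m}}\ge \frac{q^{2M}}{2}$ directly via the factorization $q^{2M}(1-q^{m})^2/(2(1-q^{2m}))\ge 0$ under $M\le m/2$, whereas you drop the denominator (bounding $1-q^{2m}\le 1$) and instead control $q^{2(m-M)}\le 18\eps$ using $-1/\log q\le\alpha/2$; both give the factor-$2$ loss and lead to the same constants, so this is a transcription of the paper's argument rather than a genuinely new proof.
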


\begin{proof}
Denote $\xi = \sqrt{\frac{2 \Delta n (\alpha + 1)^2}{(L - \mu) (\alpha - 1)}}$, and $M = \floor{\frac{\log(18\eps)}{2\log q}}$. \\
For $1 \le M \le m/2$, $N = n(M+1)/4$ and $t \le N$, we have
\begin{align*}
    \E \norm{\vx_t - \vx^*}^2 &\ge \E \left[ \norm{\vx_t - \vx^*}^2 \bigg\vert N < T_{M+1} \right] \pr{N < T_{M+1}} \\
    &\ge \E \left[ \min_{\vx \in \gF_M} \norm{\vx - \vx^*}^2 \bigg\vert N < T_{M+1} \right] \pr{N < T_{M+1}} \\
    &\ge \frac{1}{9} \min_{\vx \in \gF_M} \norm{\vx - \vx^*}^2.
\end{align*}
where $T_{M+1}$ is defined in (\ref{def:stoppong-time}), the second inequality follows from Corollary \ref{coro:stopping-time} (if $N < T_{M+1}$, then $\vx_t \in \gF_M$ for $t \le N$), and the last inequality is established because of Corollary \ref{coro:stopping-time} (More detailed explanation refer to our proof of Lemma \ref{lem:base}).

By Proposition \ref{prop:strongly}, we know that $\vx^* = \xi (q^m, q^{m-1}, \cdots, q)^{\top}$, and 
\begin{align*}
    \norm{\vx_0 - \vx^*}^2 = \norm{\vx^*}^2 = \xi^2 \frac{q^{2} - q^{2(m+1)}}{1 - q^2}.
\end{align*}

Note that if $\vx \in \gF_M$, then $x_1 = x_2 = \cdots = x_{m-M} = 0$, thus
\begin{align*}
    \min_{\vx \in \gF_M} \norm{\vx - \vx^*}^2 = \xi^2 \sum_{l = m-M}^m q^{2(m-l+1)} = \xi^2 \frac{q^{2(M+1)} - q^{2(m+1)}}{1 - q^2}.
\end{align*}

Thus, for $t \le N$ and $M \le m/2$, we have
\begin{align*}
    \frac{\E \norm{\vx_t - \vx^*}^2}{\norm{\vx_t - \vx^*}^2} &\ge \frac{1}{9} \frac{q^{2M} - q^{2m}}{1 - q^{2m}} \\
    &\ge \frac{1}{18} q^{2M} = \frac{1}{18} q^{2\floor{\frac{\log(18\eps)}{2\log q}}} \ge \eps,
\end{align*}
where the second inequality is due to 
\begin{align*}
    \frac{q^{2M} - q^{2m}}{1 - q^{2m}} - \frac{q^{2M}}{2} &= \frac{q^{2M} - 2 q^{2m} + q^{2(m+M)}}{2(1-q^{2m})} \\
    &= \frac{q^{2M}}{2(1-q^{2m})} (1 - 2 q^{2(m-M)} + q^{2m}) \\
    &\ge \frac{q^{2M}}{2(1-q^{2m})} (1 - 2 q^{m} + q^{2m}) \ge 0.
\end{align*}

Therefore, in order to find $\hat{\vx} \in \sR^m$ such that $\frac{\E \norm{\hat{\vx} - \vx^*}^2}{\norm{\vx_0 - \vx^*}^2} < \eps$, $\gA$ needs at least $N$ queries to $h_{F_{\text{SC}}}$.

As we have showed in proof of Theorem \ref{thm:strongly:example}, for $L/\mu \ge n/2 + 1$, we have
\begin{align*}
    \frac{1}{2}\sqrt{2\frac{L/\mu - 1}{n} + 1} \ge -\frac{1}{\log(q)} \ge c_1 \left( \sqrt{\frac{L/\mu - 1}{n}} + 1 \right),
\end{align*}
and 
\begin{align*}
    N &= \frac{n}{4} (M+1) \ge \frac{n}{4} \frac{\log(18\eps)}{2\log q} \\
    &\ge \frac{c_1}{8} \left( n + \sqrt{n(L/\mu - 1)} \right) \log \left( \frac{1}{18\eps} \right) \\
    &= \Omega \left( \left(n + \sqrt{\frac{nL}{\mu}}\right) \log \left( \frac{1}{\eps} \right) \right).
\end{align*}

At last, we have to ensure that $1 \le M \le m/2$, that is 
\begin{align*}
    1 \le \frac{\log(18\eps)}{2\log q} < m/2.
\end{align*}

The above conditions are satisfied when 
\begin{align*}
    m = \frac{\log(1/(18\eps))}{-\log q} + 1 \le \frac{1}{2}\left(\sqrt{2\frac{L/\mu - 1}{n} + 1}\right) \log \left(\frac{1}{18\eps}\right) + 1 
    = \gO \left( \sqrt{\frac{L}{n\mu}} \log \left(\frac{1}{\eps}\right)\right),
\end{align*}
and 
\begin{align*}
    \eps \le \frac{1}{18} q^{2}.
\end{align*}
Observe that when $L/\mu \le n / 2 + 1$, we have $\alpha \ge \sqrt{2}$ and $q = \frac{\alpha - 1}{\alpha + 1} \ge \frac{\sqrt{2} - 1}{\sqrt{2} + 1}$.
Hence, we just need $\eps \le \frac{1}{18}\left(\frac{\sqrt{2} - 1}{\sqrt{2} + 1}\right)^2 \approx 0.00164$.

\end{proof}
\subsection{Convex Case}\label{sec:convex}
The analysis of lower bound complexity for non strongly-convex cases depends on the following construction.
\begin{defn}\label{defn:c}
For fixed $L, B, n$, we define $f_{\text{C}, i}: \sR^m \rightarrow \sR$ as follows
\begin{align}
    f_{\text{C}, i}(\vx) = r_i \left(\vx; \frac{\sqrt{3}}{2} \frac{B L}{(m+1)^{3/2}}, \frac{L}{4}, 0, m, 1 \right)
\end{align}
and 
\begin{align*}
    F_{\text{C}} (\vx) \triangleq  \frac{1}{n} \sum_{i=1}^n f_{\text{C}, i}(\vx) = \frac{L}{4n} \norm{\B(m, 1) \vx}^2 - \frac{\sqrt{3}}{2}\frac{BL}{(m+1)^{3/2}n}\inner{\ve_m}{\vx}.
\end{align*}
\end{defn}

\begin{prop}\label{prop:convex}
For any $n \ge 2$, $m \ge 2$, following properties hold:
\begin{enumerate}
    \item $f_{\text{C}, i}$ is $L$-smooth and convex. 
    \item The minimizer of the function $F_{\text{C}}$ is
    $$\vx^{*} = \argmin_{\vx \in \sR^m}F_{\text{C}}(\vx) = \frac{2\xi}{L} \left(1, 2, \cdots, m\right)^{\top},$$ 
    where $\xi = \frac{\sqrt{3}}{2} \frac{BL}{(m+1)^{3/2}}$.
    Moreover, $F_{\text{C}}(\vx^*) = - \frac{m \xi^2}{n L}$ and $\norm{\vx_0 - \vx^*}^2 \le B^2$.
    \item For $1 \le k \le m$, we have
    \begin{align}\label{prop:convex:3}
    \min_{\vx \in \gF_k} F_{\text{C}}(\vx) - F_{\text{C}}(\vx^{*}) = \frac{\xi^2}{n L} (m - k).
    \end{align}
\end{enumerate}
\end{prop}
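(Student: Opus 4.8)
The plan is to verify each of the three claims by direct computation, leveraging the structure $F_{\text{C}}(\vx) = \frac{L}{4n}\norm{\B(m,1)\vx}^2 - \frac{\xi}{n}\inner{\ve_m}{\vx}$ with $\A(m,1) = \B(m,1)^\top\B(m,1)$ the tridiagonal matrix from Eq.~(\ref{ABTB}) with $\omega = 1$ (so the top-left entry is $\omega^2+1 = 2$, and $\A(m,1)$ is the classical matrix whose entries are $2$ on the diagonal and $-1$ on the off-diagonals, except the bottom-right entry which is $1$). For the first claim, smoothness and convexity of $f_{\text{C},i}$ follows immediately from Proposition \ref{prop:base} with $\lambda_1 = L/4$, $\lambda_2 = 0$, $\omega = 1 < \sqrt{2}$: this gives $(4\lambda_1 + 2\lambda_2) = L$-smoothness and $0$-strong convexity, i.e.\ convexity. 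So the only real work is in claims 2 and 3.

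For claim 2, I would compute the minimizer by setting $\nabla F_{\text{C}}(\vx) = 0$, i.e.\ $\frac{L}{2n}\A(m,1)\vx = \frac{\xi}{n}\ve_m$, so $\vx^* = \frac{2\xi}{L}\A(m,1)^{-1}\ve_m$. The key linear-algebra fact is that $\A(m,1)^{-1}\ve_m = (1,2,\dots,m)^\top$; this is checked by verifying $\A(m,1)(1,2,\dots,m)^\top = \ve_m$ directly from the tridiagonal structure (the interior rows give $-( k-1) + 2k - (k+1) = 0$, the first row gives $2\cdot 1 - 2 = 0$, and the last row gives $-(m-1) + m = 1$). This yields $\vx^* = \frac{2\xi}{L}(1,2,\dots,m)^\top$. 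Then $F_{\text{C}}(\vx^*) = -\frac{\xi}{2n}\inner{\ve_m}{\vx^*} = -\frac{\xi}{2n}\cdot\frac{2\xi}{L}m = -\frac{m\xi^2}{nL}$ (using that at the minimizer the quadratic term equals half the negative of the linear term). Finally, $\norm{\vx_0 - \vx^*}^2 = \norm{\vx^*}^2 = \frac{4\xi^2}{L^2}\sum_{k=1}^m k^2 = \frac{4\xi^2}{L^2}\cdot\frac{m(m+1)(2m+1)}{6}$; substituting $\xi = \frac{\sqrt 3}{2}\frac{BL}{(m+1)^{3/2}}$ gives $\frac{4}{L^2}\cdot\frac{3}{4}\frac{B^2L^2}{(m+1)^3}\cdot\frac{m(m+1)(2m+1)}{6} = \frac{B^2 m(2m+1)}{2(m+1)^2} \le B^2$ since $m(2m+1) \le 2(m+1)^2$.

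For claim 3, I would minimize $F_{\text{C}}$ over the subspace $\gF_k = \spn\{\ve_m,\dots,\ve_{m-k+1}\}$, i.e.\ over vectors $\vx$ with $x_1 = \cdots = x_{m-k} = 0$. Restricted to these $k$ coordinates, $F_{\text{C}}$ is again a quadratic of the same tridiagonal form but in dimension $k$: the relevant block of $\A(m,1)$ is exactly $\A(k,1)$ (the bottom-right $k\times k$ submatrix keeps the same pattern, including the bottom-right entry $1$, and picks up a leading diagonal entry $2 = \omega^2+1$ with $\omega=1$), and the linear term is still $-\frac{\xi}{n}$ times the last coordinate. Hence by the same computation as claim 2, the minimizer over $\gF_k$ has last $k$ coordinates $\frac{2\xi}{L}(1,2,\dots,k)^\top$ and minimal value $-\frac{k\xi^2}{nL}$. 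Therefore $\min_{\vx\in\gF_k}F_{\text{C}}(\vx) - F_{\text{C}}(\vx^*) = -\frac{k\xi^2}{nL} + \frac{m\xi^2}{nL} = \frac{\xi^2}{nL}(m-k)$, as claimed. The main obstacle — really the only nontrivial point — is recognizing that the $\gF_k$-restricted problem has exactly the same tridiagonal structure (so that $\A(m,1)$ restricted to the last $k$ coordinates behaves like $\A(k,1)$), which hinges on the precise placement of the $\omega^2+1$ entry in $\B(m,\omega)^\top\B(m,\omega)$ being at the top; I would state this submatrix identity explicitly as the crux of the argument. Everything else is bookkeeping with arithmetic series.
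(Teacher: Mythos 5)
Your proposal is correct and takes essentially the same route as the paper: solve $\nabla F_{\text{C}}(\vx)=0$ via the tridiagonal system $\A(m,1)\vx^* = \frac{2\xi}{L}\ve_m$ with explicit solution $(1,2,\dots,m)^\top$, use the standard $F(\vx^*)=-\tfrac12\langle\vc,\vx^*\rangle$ identity for the optimal value, bound $\|\vx^*\|^2$ via $\sum k^2$, and reduce the $\gF_k$-restricted minimization to the same problem in dimension $k$. Your explicit remark that the bottom-right $k\times k$ block of $\A(m,1)$ is exactly $\A(k,1)$ (because $\omega^2+1=2$ when $\omega=1$) makes the ``similar calculation'' step in the paper precise, and is the right thing to highlight as the crux.
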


Note that the $f_{\text{C}, i}$ are $L$-smooth and convex, and $\norm{\vx_0 - \vx^*} \le B$. 
Next we establish the lower bound for functions $f_{\text{C}, i}$ defined above.
\begin{thm}\label{thm:convex:example}
Suppose that 
\begin{align*}
    \eps \le \frac{B^2 L}{384 n}\; \mbox{ and } \; m = \floor{\sqrt{\frac{B^2 L}{24 n \eps}}} - 1.
\end{align*}
In order to find $\hat{\vx} \in \sR^m$ such that $\E F_{\text{C}}(\hat{\vx}) - F_{\text{C}}(\vx^*) < \eps$, $\gA$ needs at least $\Omega\left(n + B \sqrt{\frac{n L}{\eps}}\right)$ queries to $h_{F_{\text{C}}}$.
\end{thm}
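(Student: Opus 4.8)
The plan is to mirror the structure of the proof of Theorem \ref{thm:strongly:example}, substituting the convex construction $F_{\text{C}}$ and Proposition \ref{prop:convex} for the strongly-convex one. First I would set $M$ so that the gap on $\gF_M$ is at least $9\eps$: by part 3 of Proposition \ref{prop:convex}, $\min_{\vx\in\gF_M} F_{\text{C}}(\vx) - F_{\text{C}}(\vx^*) = \frac{\xi^2}{nL}(m-M)$, where $\xi = \frac{\sqrt{3}}{2}\frac{BL}{(m+1)^{3/2}}$, so $\frac{\xi^2}{nL} = \frac{3B^2L}{4n(m+1)^3}$. Choosing $M = \floor{m/2}$ (or any constant fraction of $m$) makes $m - M \ge m/2$, and then one checks that with $m = \floor{\sqrt{B^2L/(24n\eps)}} - 1$ the quantity $\frac{3B^2L}{4n(m+1)^3}\cdot\frac{m}{2}$ is indeed $\ge 9\eps$; this is the routine inequality verification that ties the choice of $m$ to the hypothesis $\eps \le B^2L/(384n)$, which is there precisely to guarantee $m \ge 1$ (equivalently $M \ge 1$).

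Next I would invoke Lemma \ref{lem:base}: with this $M$ and $N = n(M+1)/4$, we get $\min_{t \le N} \E F_{\text{C}}(\vx_t) - F_{\text{C}}(\vx^*) \ge \eps$, so any PIFO algorithm needs at least $N$ queries to find $\hat\vx$ with $\E F_{\text{C}}(\hat\vx) - F_{\text{C}}(\vx^*) < \eps$. It remains to convert $N = n(M+1)/4 = \Theta(nm)$ into the advertised bound $\Omega(n + B\sqrt{nL/\eps})$. Since $m + 1 = \floor{\sqrt{B^2L/(24n\eps)}} \ge \frac{1}{2}\sqrt{B^2L/(24n\eps)}$ once $m \ge 1$, we have $N \ge \frac{n}{8}\cdot\frac{1}{2}\sqrt{B^2L/(24n\eps)} = \Omega(B\sqrt{nL/\eps})$; and trivially any algorithm must make $\Omega(n)$ queries (e.g.\ it must query every component at least once, or more carefully this follows from the $n$ term already baked into $N$ when $m$ is small), giving the $\Omega(n + B\sqrt{nL/\eps})$ bound. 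Finally I would record that the construction lives in dimension $d = m = \gO(1 + B\sqrt{L/(n\eps)})$ and that $f_{\text{C},i}$ are $L$-smooth convex with $\norm{\vx_0 - \vx^*} \le B$, which are exactly parts 1 and 2 of Proposition \ref{prop:convex}, thereby also establishing Theorem \ref{thm:convex}.

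The only mildly delicate point — and the part I would be most careful about — is the arithmetic linking the hypotheses $\eps \le B^2L/(384n)$ and $m = \floor{\sqrt{B^2L/(24n\eps)}}-1$ to the two requirements $M \ge 1$ and $\frac{\xi^2}{nL}(m-M) \ge 9\eps$ simultaneously; the constants $384$ and $24$ are evidently chosen so both hold with $M = \floor{m/2}$, but one must track the floor functions and the factor discrepancy between $(m+1)^3$ in the denominator and $m/2$ in the numerator. Everything else is a direct transcription of the strongly-convex argument with the geometric decay $q^{2k}$ replaced by the linear decay $(m-k)$, so no new ideas are needed beyond Proposition \ref{prop:convex} and Lemma \ref{lem:base}.
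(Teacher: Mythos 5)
Your proposal follows the paper's proof step for step: pick $M$ a constant fraction of $m$, use part 3 of Proposition~\ref{prop:convex} to show the suboptimality gap on $\gF_M$ is at least $9\eps$, invoke Lemma~\ref{lem:base}, and translate $N = n(M{+}1)/4$ into $\Omega(n + B\sqrt{nL/\eps})$. The one place your arithmetic as written does not quite close is the choice $M = \floor{m/2}$: for even $m$ this gives $m - M = m/2$, so
\begin{align*}
\frac{\xi^2}{nL}(m-M) \;=\; \frac{3B^2L}{4n}\cdot\frac{m/2}{(m+1)^3} \;=\; \frac{3B^2L}{8n(m+1)^2}\cdot\frac{m}{m+1} \;\ge\; 9\eps\cdot\frac{m}{m+1},
\end{align*}
which falls short of $9\eps$ by the factor $m/(m+1)$. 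The paper instead takes $M = \floor{(m-1)/2}$, so that $m - M \ge (m+1)/2$ and the $(m+1)$ cancels cleanly against $(m+1)^3$, yielding the sharp bound $\frac{3B^2L}{8n(m+1)^2} \ge 9\eps$ with no slack. You flagged exactly this floor/constant bookkeeping as the delicate point, and the fix is just shifting the floor by one; you also should note that $\eps \le B^2L/(384n)$ in fact gives $m \ge 3$ (not merely $m \ge 1$), which is what makes the final estimate $N \ge \frac{n}{8}\bigl(\sqrt{B^2L/(24n\eps)} - 2\bigr)$ nonnegative and the absorption of the $\Omega(n)$ term legitimate. With that single adjustment your argument is the paper's argument.
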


\begin{proof}
Since $\eps \le \frac{B^2 L}{384 n}$, we have $m \ge 3$.
Let $\xi = \frac{\sqrt{3}}{2} \frac{BL}{(m+1)^{3/2}}$.

For $M = \floor{\frac{m-1}{2}} \ge 1$, we have $m - M \ge (m+1) /2$, and
\begin{align*}
    \min_{\vx \in \gF_M} F_{\text{C}}(\vx) - F_{\text{C}}(\vx^{*}) &= \frac{\xi^2}{n L} (m - M) = \frac{3 B^2 L}{4n} \frac{m-M}{(m+1)^3} \\ &\ge \frac{3 B^2 L}{8n} \frac{1}{(m+1)^2} \ge 9\eps,
\end{align*}
where the first equation is according to the 3rd property in Proposition \ref{prop:convex} and the last inequality follows from $m + 1 \le B \sqrt{L/(24n\eps)}$.

Similar to the proof of Theorem \ref{thm:strongly:example}, by Lemma \ref{lem:base},  we have
\begin{align*}
    \min_{t \le N} \E F_{\text{C}}(\vx_t) - F_{\text{C}}(\vx^*) \ge \eps.
\end{align*}
In other words, in order to find $\hat{\vx} \in \sR^m$ such that $\E F_{\text{C}}(\hat{\vx}) - F_{\text{C}}(\vx^{*}) < \eps$, $\gA$ needs at least $N$ queries to $h_F$.

At last, observe that
\begin{align*}
    N &= (M+1)n/4 = \frac{n}{4} \floor{\frac{m+1}{2}} \\
    &\ge \frac{n(m-1)}{8} \\
    &\ge \frac{n}{8} \left(\sqrt{\frac{B^2 L}{24 n \eps}} - 2\right) \\
    &= \Omega \left( n + B\sqrt{\frac{n L}{\eps}} \right),
\end{align*}
where we have recalled $\eps \le \frac{B^2 L}{384 n}$ in last equation.

\end{proof}

To derive Theorem \ref{thm:convex}, we also need the following lemma for the case $\eps > \frac{B^2 L}{384n}$.
\begin{lemma}\label{lem:convex:example:2}
For any PIFO algorithm $\gA$ and any $L, n, B, \eps$ such that $\eps \le LB^2 /4$, there exist $n$ $L$-smooth and convex  functions $\{f_i: \sR \rightarrow \sR\}_{i=1}^n$  such that $|x_0 - x^*| \le B$. In order to find $\hat{x} \in \sR$ such that $\E F(\hat{x}) - F(x^*) < \eps$, $\gA$ needs at least $\Omega(n)$ queries to $h_F$.
\end{lemma}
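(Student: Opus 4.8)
The plan is to handle the remaining regime $\eps > B^2 L/(384 n)$ with a trivial one-dimensional hard instance, exploiting only the fact that a PIFO algorithm needs to query \emph{each} component at least once before it can ``see'' it. First I would fix $d=1$ and build $n$ scalar functions $f_i:\sR\to\sR$ that are $L$-smooth and convex, with the crucial property that $f_i\equiv 0$ (or some fixed constant) in a neighbourhood of $x_0=0$ for every $i$ except possibly a single distinguished index, so that querying $h_F(\,\cdot\,,i,\gamma)$ at any point the algorithm can currently reach yields no information about the location of the minimizer unless $i$ equals that distinguished index. Concretely, I would take each $f_i$ to be a shifted Huber-type function: zero on an interval around the origin, then growing quadratically with curvature $L$, with the shifts arranged so that $F=\frac1n\sum_i f_i$ has its minimizer at some $x^*$ with $|x^*|\le B$ and with $F(x_0)-F(x^*)$ of order $B^2 L/n$, which in this regime is $\le$ a constant times $\eps$, yet still $\ge \eps/c$ for a suitable constant after rescaling $B$ slightly.

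The key steps, in order, are: (i) define the $n$ scalar components explicitly and verify $L$-smoothness and convexity of each, and verify $|x_0-x^*|\le B$ and a lower bound $F(x)-F(x^*)\ge\eps$ whenever $x$ lies outside the ``active region'' of some not-yet-queried component; (ii) invoke the PIFO structure: until the algorithm has drawn \emph{every} index $j\in[n]$ at least once, the iterate $x_t$ remains in a region where $F(x_t)-F(x^*)$ is bounded below by $\eps$ — this is the analogue of the $\gF_k$ argument but now the obstruction is simply ``unseen components'' rather than ``unseen coordinates''; (iii) bound below the expected number of PIFO calls needed to query all $n$ indices. For step (iii) I would use a coupon-collector / geometric-tail argument: with the worst-case sampling distribution one can force the expected first time all indices appear to be $\Omega(n)$, and more importantly show $\pr{T_{\text{all}} > cn}$ is bounded below by a constant, so that $\min_{t\le cn}\E F(x_t)-F(x^*)\ge \eps$ by the same conditioning trick used in the proof of Lemma \ref{lem:base}. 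Since the problem allows the adversary to choose the construction after seeing $\gA$ (hence after seeing $\{p_j\}$), one can even just note that $\Omega(n)$ queries are needed merely to touch every component, independent of the $p_j$, because each query reveals at most one index.

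I expect the main obstacle to be the \emph{quantitative matching of constants}: the statement of Theorem \ref{thm:convex} pairs this $\Omega(n)$ lemma with Theorem \ref{thm:convex:example} to cover the full range $\eps\le LB^2/4$, so the scalar construction here must simultaneously respect $|x_0-x^*|\le B$, be $L$-smooth, and yield $\E F(\hat x)-F(x^*)\ge\eps$ for \emph{all} $\eps$ in the leftover window $(B^2L/384n,\,LB^2/4]$ — a window of bounded multiplicative width, which is exactly what makes a single fixed-scale construction possible. The delicate point is choosing the ``dead zone'' width and the shift pattern so that the suboptimality gap at $x_0$, namely $F(x_0)-F(x^*)$, lands in the right constant-times-$\eps$ band while the minimizer stays within distance $B$; I would parametrize the construction by a free scalar (the common shift magnitude) and solve for it, checking that the resulting requirement on $\eps$ is implied by $\eps\le LB^2/4$. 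The geometric-tail step reuses Lemma \ref{lem:geo} essentially verbatim (with $N=n$, $p_i\le 1$ trivially, or with the refined bound $\sum p_i = 1$), so no new probabilistic input is needed.
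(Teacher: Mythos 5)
Your construction idea is essentially the right one and matches the paper's: a one-dimensional instance with a single distinguished component so that every other component is ``inert'' at $x_0=0$, and the algorithm cannot move until that component is sampled. (The paper uses plain quadratics $g_1(x)=\tfrac{L}{2}x^2-nLBx$, $g_j(x)=\tfrac{L}{2}x^2$ for $j\ge 2$, rather than Huber-type dead zones; the dead zone is unnecessary since all that matters is $\nabla g_j(0)=0$ and $\prox_{g_j}^\gamma(0)=0$.) However, there are two genuine gaps in your analysis. First, the scaling is off: you set $F(x_0)-F(x^*)$ to be of order $B^2L/n$, but when $\eps$ is near its ceiling $LB^2/4$ this is a factor $n$ too small to dominate $\eps$, and ``rescaling $B$ slightly'' cannot fix it because $B$ is fixed in the statement and $L$-smoothness caps the gap at $\tfrac{L}{2}B^2$. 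The correct choice is $F(x_0)-F(x^*)=\Theta(LB^2)$, $n$-independent, which the paper achieves and which gives $F(x_0)-F(x^*)\ge 2\eps$ uniformly over $\eps\le LB^2/4$.

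Second, your probabilistic step is not the right one. With a single distinguished index the relevant stopping time is $T=\min\{t:i_t=1\}$, which is \emph{one} geometric random variable with parameter $p_1\le 1/n$, not a sum of $n$ of them; Lemma~\ref{lem:geo} with ``$N=n$'' simply does not apply (with $N=1$ it gives the vacuous bound $1-16/9<0$). What is needed, and what the paper does, is the elementary one-geometric tail bound $\pr{T>\tfrac{1}{2p_1}}=(1-p_1)^{\floor{1/(2p_1)}}\ge(1-1/n)^{n/2}\ge\tfrac12$, and then the usual conditioning. Your coupon-collector framing (``until the algorithm has drawn \emph{every} index'') is also a red herring here: it cannot be realised by any one-dimensional instance, since if every $f_i$ were inert at $x_0$ then $\nabla F(x_0)=0$ and $x_0$ would already be the minimizer. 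The construction necessarily has exactly one informative component at $x_0$, and the whole argument must be phrased in terms of the waiting time for that single index.
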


\begin{proof}
Consider the following functions $\{g_i\}_{1 \le i \le n}$, $g_i: \sR \rightarrow \sR$, where
\begin{align*}
    g_1(x) &= \frac{L}{2} x^2 - n L B x, \\
    g_i(x) &= \frac{L}{2} x^2, \\
    G(x) &= \frac{1}{n} \sum_{i=1}^n g_i(x) = \frac{L}{2} x^2 - L B x .
\end{align*}

First observe that
\begin{align*}
    x^* = \argmin_{x \in \sR} G(x) = B, \\
    G(0) - G(x^*) = \frac{LB^2}{2},
\end{align*}
and $|x_0 - x^*| = B$.

For $i > 1$, we have $\frac{d g_i(x)}{d x} \vert_{x = 0} = 0$ and $\prox_{g_i}^{\gamma} (0) = 0$.
Thus $x_t = 0$ will host till our first-order method $\gA$ draws the component $f_1$.
That is, for $t < T = \argmin \{t: i_t = 1\}$, we have $x_t = 0$.

Hence, for $t \le \frac{1}{2p_1}$, we have
\begin{align*}
    \E G(x_t) - F(x^*) &\ge \E \left[G(x_t) - G(x^*) \Big\vert \frac{1}{2p_1} < T\right] \pr{\frac{1}{2p_1} < T} \\
    &= \frac{LB^2}{2} \pr{\frac{1}{2p_1} < T}.
\end{align*}

Note that $T$ follows a geometric distribution with success probability $p_1 \le 1/n$, and
\begin{align*}
    \pr{ T > \frac{1}{2p_1} } = \pr{ T > \floor{\frac{1}{2p_1}} } = (1 - p_1)^{\floor{\frac{1}{2p_1}}} \\
    \ge (1 - p_1)^{\frac{1}{2p_1}} \ge (1 - 1/n)^{n/2} \ge \frac{1}{2},
\end{align*}
where the second inequality follows from $h(z) = \frac{\log(1 - z)}{2z}$ is a decreasing function.

Thus, for $t \le \frac{1}{2p_1}$, we have
\begin{align*}
    \E G(x_t) - F(x^*) \ge \frac{LB^2}{4} \ge \eps
\end{align*}

Thus, in order to find $\hat{x} \in \sR$ such that $\E F(\hat{x}) - F(x^{*}) < \eps$, $\gA$ needs at least $ \frac{1}{2p_1} \ge n/2 = \Omega\left( n \right)$ queries to $h_G$.

\end{proof}

It is worth noting that if $\eps > \frac{B^2 L}{384 n}$, then $\Omega (n) = \Omega \left( n + B\sqrt{\frac{nL}{\eps}} \right)$. 
Thus combining Theorem \ref{thm:convex:example} and Lemma \ref{lem:convex:example:2}, we obtain Theorem \ref{thm:convex}.
\subsection{Average Smooth Case}\label{sec:average}
\citet{zhou2019lower} established lower bounds of IFO complexity under the average smooth assumption. Here we  demonstrate that our technique can also develop lower bounds of PIFO algorithm under this assumption.



\subsubsection{$F$ is Strongly Convex}

For fixed $L', \mu, \Delta, n, \eps$, we set $L = \sqrt{\frac{n(L'^2 - \mu^2)}{2} - \mu^2}$, and consider $\{f_{\text{SC}, i}\}_{i=1}^n$ and $F_{\text{SC}}$ defined in Definition \ref{defn:sc}.

\begin{prop}\label{prop:average:example:strongly}
For $n \ge 2$, we have that
\begin{enumerate}
    \item $F_{\text{SC}}(\vx)$ is $\mu$-strongly convex and $\{f_{\text{SC}, i}\}_{i=1}^n$ is $L'$-average smooth.
    \item If $\frac{L'}{\mu} \ge \sqrt{\frac{3}{n}}(\frac{n}{2} + 1)$, 
    then we have $\sqrt{\frac{n}{3}} L' \le L \le \sqrt{\frac{n}{2}} L'$ and $L / \mu \ge n/2 + 1$.
\end{enumerate}
\end{prop}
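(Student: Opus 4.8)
The plan is to verify Proposition~\ref{prop:average:example:strongly} by reducing everything to the coefficients appearing in Definition~\ref{defn:sc} and then invoking Proposition~\ref{prop:base} (and Proposition~\ref{prop:strongly}) with the parameter substitution $\lambda_1 = (L-\mu)/4$, $\lambda_2 = \mu/2$, $\omega = \sqrt{2/(\alpha+1)}$, where $L = \sqrt{n(L'^2-\mu^2)/2 - \mu^2}$. First I would record the $\mu$-strong convexity of $F_{\text{SC}}$: since $\lambda_2 = \mu/2$, Proposition~\ref{prop:base} already gives each $f_{\text{SC},i}$ is $\mu$-strongly convex, hence so is the average $F_{\text{SC}}$ (a sum of $\mu$-strongly convex functions divided by $n$ is $\mu$-strongly convex), and this does not depend on the precise value of $L$. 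For the average-smoothness claim I would plug $\lambda_1, \lambda_2$ into the formula $L' = 2\sqrt{\frac{4}{n}[(\lambda_1+\lambda_2)^2 + \lambda_1^2] + \lambda_2^2}$ from Proposition~\ref{prop:base} and check that it equals the prescribed $L'$; this is the algebraic heart of item~1.

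Concretely, with $\lambda_1+\lambda_2 = (L-\mu)/4 + \mu/2 = (L+\mu)/4$ and $\lambda_1 = (L-\mu)/4$, the quantity inside is
\begin{align*}
\frac{4}{n}\left[\frac{(L+\mu)^2}{16} + \frac{(L-\mu)^2}{16}\right] + \frac{\mu^2}{4}
= \frac{1}{4n}\left[(L+\mu)^2 + (L-\mu)^2\right] + \frac{\mu^2}{4}
= \frac{L^2+\mu^2}{2n} + \frac{\mu^2}{4}.
\end{align*}
So the average-smoothness constant produced by Proposition~\ref{prop:base} is $2\sqrt{(L^2+\mu^2)/(2n) + \mu^2/4}$, and I would check that substituting $L^2 = n(L'^2-\mu^2)/2 - \mu^2$ makes $(L^2+\mu^2)/(2n) = (L'^2-\mu^2)/4$, whence the constant becomes $2\sqrt{(L'^2-\mu^2)/4 + \mu^2/4} = \sqrt{L'^2} = L'$, as desired. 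I would also note the side condition $\omega < \sqrt{2}$ needed by Proposition~\ref{prop:base}: since $\alpha \ge 1$ we have $\omega = \sqrt{2/(\alpha+1)} \le 1 < \sqrt{2}$, so the hypothesis is met; and $L \ge \mu$ (needed so that $\lambda_1 \ge 0$) must be checked, which follows once item~2's lower bound on $L$ is established under the stated assumption on $L'/\mu$.

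For item~2, assume $L'/\mu \ge \sqrt{3/n}\,(n/2+1)$, equivalently $L'^2/\mu^2 \ge \frac{3}{n}(n/2+1)^2$. I would start from $L^2 = \frac{n}{2}(L'^2-\mu^2) - \mu^2$ and derive the two-sided bound $\frac{n}{3}L'^2 \le L^2 \le \frac{n}{2}L'^2$. The upper bound is immediate since $L^2 = \frac{n}{2}L'^2 - (\frac{n}{2}+1)\mu^2 \le \frac{n}{2}L'^2$. For the lower bound $L^2 \ge \frac{n}{3}L'^2$ I would rearrange to $\frac{n}{6}L'^2 \ge (\frac{n}{2}+1)\mu^2$, i.e.\ $L'^2/\mu^2 \ge \frac{6}{n}(\frac{n}{2}+1) = \frac{3}{n}\cdot\frac{2(\frac{n}{2}+1)}{1}$; I should double-check the exact constant here against the hypothesis $L'^2/\mu^2 \ge \frac{3}{n}(\frac{n}{2}+1)^2$, using that $(\frac{n}{2}+1)^2 \ge 2(\frac{n}{2}+1)$ for $n\ge2$ (since $\frac{n}{2}+1 \ge 2$), which closes the gap. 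Finally, $L/\mu \ge n/2+1$ follows from $L^2/\mu^2 \ge \frac{n}{3}L'^2/\mu^2 \ge \frac{n}{3}\cdot\frac{3}{n}(\frac{n}{2}+1)^2 = (\frac{n}{2}+1)^2$. I expect the main obstacle to be bookkeeping the constants in the item~2 chain of inequalities — getting the factor $\sqrt{3/n}$ versus $\sqrt{2/n}$ exactly right and confirming the $n\ge2$ slack is genuinely available — rather than anything conceptually deep; the reductions in item~1 are pure substitution once Proposition~\ref{prop:base} is in hand.
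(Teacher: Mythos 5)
Your proposal is correct and follows essentially the same route as the paper: plug the Definition~\ref{defn:sc} parameters $\lambda_1=(L-\mu)/4$, $\lambda_2=\mu/2$ into the $L'$-formula of Proposition~\ref{prop:base}, simplify via $L^2 = \tfrac{n}{2}(L'^2-\mu^2) - \mu^2$, and then for item~2 show $L^2 - \tfrac{n}{3}L'^2 \ge 0$ using $L'^2/\mu^2 \ge \tfrac{3}{n}(\tfrac{n}{2}+1)^2$ with the $n\ge2$ slack; the paper computes $L^2-\tfrac{n}{3}L'^2 = \tfrac{n^2-4}{8}\mu^2 \ge 0$ directly while you phrase it as a sufficient condition, but it is the same arithmetic. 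Your explicit checks that $\omega=\sqrt{2/(\alpha+1)}<\sqrt{2}$ and that $L>\mu$ (so $\lambda_1>0$) are correct side-conditions that the paper leaves implicit.
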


\begin{proof}
~
\begin{enumerate}
\item It is easily to check that $F_{\text{SC}}(\vx)$ is $\mu$-strongly convex.
Following from Proposition \ref{prop:base}, then $\{f_{\text{SC}, i}\}_{i=1}^n$ is $\hat{L}$-average smooth, 
where
\begin{align*}
    \hat{L} 
    &= \sqrt{\frac{16}{n} \left[ \left(\frac{L+\mu}{4}\right)^2 + \left(\frac{L-\mu}{4}\right)^2 \right] + \mu^2} \\
    &= \sqrt{\frac{2(L^2 + \mu^2)}{n} + \mu^2} = L'.
\end{align*}

\item Clearly, $L = \sqrt{\frac{n(L'^2 - \mu^2)}{2} - \mu^2} \le \sqrt{\frac{n}{2}} L'$. \\
Furthermore, according to $\frac{L'}{\mu} \ge \sqrt{\frac{3}{n}}(\frac{n}{2} + 1)$, we have
\begin{align*}
    L^2 - \frac{n}{3}L'^2 &= \frac{n}{2} (L'^2 - \mu^2) - \mu^2 - \frac{n}{3}L'^2 \\
    &= \frac{1}{2} \left(\frac{n}{2} + 1\right)^2 \mu^2 - \frac{n + 2}{2} \mu^2 \\
    &= \left(\frac{n^2}{8} - \frac{1}{2}\right)\mu^2 \ge 0,
\end{align*}
and, $L/\mu \ge \sqrt{\frac{n}{3}} L' / \mu \ge n / 2 + 1$.

\end{enumerate}
\end{proof}

\begin{thm}\label{thm:average:example:strongly}
Suppose that
\begin{align*}
    \frac{L'}{\mu} \ge \sqrt{\frac{3}{n}}\left(\frac{n}{2} + 1\right), ~ \eps \le \frac{\Delta}{9} \left(\frac{\sqrt{2}-1}{\sqrt{2}+1}\right)^{2}, \text{ and } 
    m = \frac{1}{4}\left(\sqrt{ \sqrt{\frac{2}{n}} \frac{L'}{\mu} + 1}\right) \log \left(\frac{\Delta}{9\eps}\right) + 1.
\end{align*}
In order to find $\hat{\vx} \in \sR^m$ such that $\E F_{\text{SC}}(\hat{\vx}) - F_{\text{SC}}(\vx^*) < \eps$, PIFO algorithm $\gA$ needs at least $\Omega\left(\left(n + n^{3/4} \sqrt{\frac{L'}{\mu}}\right)\log\left( \frac{\Delta}{\eps} \right)\right)$ queries to $h_{F_{\text{SC}}}$.
\end{thm}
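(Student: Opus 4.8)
The plan is to reduce Theorem~\ref{thm:average:example:strongly} to the already-established strongly convex result Theorem~\ref{thm:strongly:example}, using Proposition~\ref{prop:average:example:strongly} as the bridge. Recall that the construction $\{f_{\text{SC},i}\}_{i=1}^n$ and $F_{\text{SC}}$ here are defined exactly as in Definition~\ref{defn:sc}, but with the internal smoothness parameter $L$ chosen as $L = \sqrt{\tfrac{n(L'^2-\mu^2)}{2} - \mu^2}$ so that, by part~1 of Proposition~\ref{prop:average:example:strongly}, $\{f_{\text{SC},i}\}_{i=1}^n$ is $L'$-average smooth and $F_{\text{SC}}$ is $\mu$-strongly convex. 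So the function class genuinely satisfies the hypotheses of the theorem.

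First I would check that the hypothesis $L'/\mu \ge \sqrt{3/n}(n/2+1)$, together with part~2 of Proposition~\ref{prop:average:example:strongly}, implies $L/\mu \ge n/2 + 1$, which is precisely the condition needed to invoke Theorem~\ref{thm:strongly:example}. I would also need to translate the choice of $m$: Theorem~\ref{thm:strongly:example} wants $m = \tfrac14\bigl(\sqrt{2(L/\mu-1)/n + 1}\bigr)\log(\Delta/(9\eps)) + 1$, and since $L \le \sqrt{n/2}\,L'$ gives $2(L/\mu-1)/n + 1 \le \sqrt{2/n}\,L'/\mu + 1$, the value $m = \tfrac14\bigl(\sqrt{\sqrt{2/n}\,L'/\mu + 1}\bigr)\log(\Delta/(9\eps)) + 1$ stated here is an upper bound on what is required, so all the dimension constraints $1 \le M < m$ in the proof of Theorem~\ref{thm:strongly:example} still go through. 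The condition $\eps/\Delta \le \tfrac19\bigl(\tfrac{\sqrt2-1}{\sqrt2+1}\bigr)^2$ is identical in both statements, so nothing new is needed there.

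Next, Theorem~\ref{thm:strongly:example} directly yields that $\gA$ needs at least $\Omega\bigl((n + \sqrt{nL/\mu})\log(\Delta/\eps)\bigr)$ queries to $h_{F_{\text{SC}}}$ to achieve $\E F_{\text{SC}}(\hat\vx) - F_{\text{SC}}(\vx^*) < \eps$. The final step is to rewrite $\sqrt{nL/\mu}$ in terms of $L'/\mu$: using $L \ge \sqrt{n/3}\,L'$ from part~2 of Proposition~\ref{prop:average:example:strongly}, we get $\sqrt{nL/\mu} \ge \sqrt{n \cdot \sqrt{n/3}\,L'/\mu} = (n/3)^{1/4}\sqrt{n}\,\sqrt{L'/\mu}\cdot n^{-1/4}\cdot n^{1/4} = \Omega\bigl(n^{3/4}\sqrt{L'/\mu}\bigr)$ (more carefully, $\sqrt{n \cdot n^{1/2} L'/(\sqrt3\,\mu)} = 3^{-1/4} n^{3/4}\sqrt{L'/\mu}$), so the lower bound becomes $\Omega\bigl((n + n^{3/4}\sqrt{L'/\mu})\log(\Delta/\eps)\bigr)$, as claimed.

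I do not expect a serious obstacle here: this is essentially a bookkeeping argument that unwinds the substitution $L \leftrightarrow L'$ through the two-sided bound $\sqrt{n/3}\,L' \le L \le \sqrt{n/2}\,L'$. The only mild subtlety is being careful with the direction of the inequalities---one needs the upper bound on $L$ to keep $m$ (hence the dimension $d$) small and to preserve the constraint $M < m$, and the lower bound on $L$ to get the matching $\Omega(n^{3/4}\sqrt{\kappa})$ lower bound on the query count---but both are supplied cleanly by Proposition~\ref{prop:average:example:strongly}. One should also double-check that the edge case where $L'/\mu$ is small enough that $\sqrt{n/2}\,L'/\mu$ fails to dominate $n/2+1$ is already excluded by the standing hypothesis, which it is.
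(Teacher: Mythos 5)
Your proposal is correct and matches the paper's own proof in structure and substance: both reduce to Theorem~\ref{thm:strongly:example} via Proposition~\ref{prop:average:example:strongly}, use $L \le \sqrt{n/2}\,L'$ to verify that the stated $m$ is at least as large as what Theorem~\ref{thm:strongly:example} requires, and use $L \ge \sqrt{n/3}\,L'$ to convert $\sqrt{nL/\mu}$ into $\Omega(n^{3/4}\sqrt{L'/\mu})$. The only blemish is the garbled intermediate algebra in "$(n/3)^{1/4}\sqrt{n}\,\sqrt{L'/\mu}\cdot n^{-1/4}\cdot n^{1/4}$," but your parenthetical correction $\sqrt{n\cdot n^{1/2}L'/(\sqrt3\,\mu)} = 3^{-1/4}n^{3/4}\sqrt{L'/\mu}$ is the right computation and matches the paper.
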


\begin{proof}
By 2nd property of Proposition \ref{prop:average:example:strongly}, we know that $L / \mu \ge n/2 + 1$.
Moreover, 
\begin{align*}
    m &= \frac{1}{4}\left(\sqrt{ \sqrt{\frac{2}{n}} \frac{L'}{\mu} + 1}\right) \log \left(\frac{\Delta}{9\eps}\right) + 1 \\
    &\ge \frac{1}{4}\left(\sqrt{2\frac{L/\mu - 1}{n} + 1}\right) \log \left(\frac{\Delta}{9\eps}\right) + 1,
\end{align*}

Then, by Theorem \ref{thm:strongly:example} \footnote{By the proof of Theorem \ref{thm:strongly:example}, a larger dimension $m$ does not affect the conclusion of the theorem.}, in order to find $\hat{\vx} \in \sR^m$ such that $\E F_{\text{SC}}(\hat{\vx}) - F_{\text{SC}}(\vx^{*}) < \eps$, $\gA$ needs at least $N$ queries to $h_{F_{\text{SC}}}$, where
\begin{align*}
    N &= \Omega\left(\left(n + \sqrt{\frac{nL}{\mu}}\right)\log\left( \frac{\Delta}{\eps} \right)\right) \\
    &= \Omega\left(\left(n + \sqrt{\frac{n\sqrt{n/3} L'}{\mu}}\right)\log\left( \frac{\Delta}{\eps} \right)\right) \\
    &= \Omega\left(\left(n + n^{3/4} \sqrt{\frac{L'}{\mu}}\right)\log\left( \frac{\Delta}{\eps} \right)\right).
\end{align*}

\end{proof}

\subsubsection{$F$ is Convex}
For fixed $L', B, n, \eps$, we set $L = \sqrt{\frac{n}{2}}L'$, and consider $\{f_{\text{C}, i}\}_{i=1}^n$ and $F_{\text{C}}$ defined in Definition \ref{defn:c}.
It follows from Proposition \ref{prop:base}  that $\{f_{\text{C}, i}\}_{i=1}^n$ is $L'$-average smooth.

\begin{thm}\label{thm:average:example:convex}
Suppose that 
\begin{align*}
    \eps \le \frac{\sqrt{2}}{768} \frac{B^2 L'}{\sqrt{n}}\; \mbox{ and } \; m = \floor{ \frac{\sqrt[4]{18}}{12} B n^{-1/4}\sqrt{\frac{L'}{\eps}}} - 1.
\end{align*}
In order to find $\hat{\vx} \in \sR^m$ such that $\E F_{\text{C}}(\hat{\vx}) - F_{\text{C}}(\vx^*) < \eps$, $\gA$ needs at least $\Omega\left(n + B n^{3/4} \sqrt{\frac{L'}{\eps}}\right)$ queries to $h_{F_{\text{C}}}$.
\end{thm}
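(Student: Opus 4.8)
The plan is to reduce Theorem~\ref{thm:average:example:convex} to the already-established convex case (Theorem~\ref{thm:convex:example}), exactly as the strongly-convex average-smooth result was reduced to Theorem~\ref{thm:strongly:example}. First I would note that with the choice $L = \sqrt{n/2}\,L'$, Proposition~\ref{prop:base} (applied with $\lambda_1 = L/4$, $\lambda_2 = 0$, $\omega = 1$) gives $L' = 2\sqrt{\frac{4}{n}\cdot 2(L/4)^2} = 2\sqrt{\frac{L^2}{2n}} = \sqrt{2/n}\,L = \sqrt{2/n}\cdot\sqrt{n/2}\,L' $, so $\{f_{\text{C},i}\}_{i=1}^n$ is indeed $L'$-average smooth, and $F_{\text{C}}$ is convex by Remark~\ref{remark:smooth}. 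The functions $f_{\text{C},i}$ and $F_{\text{C}}$ are literally the same objects from Definition~\ref{defn:c}, so Proposition~\ref{prop:convex} applies verbatim: in particular $\norm{\vx_0 - \vx^*}\le B$ and $\min_{\vx\in\gF_k}F_{\text{C}}(\vx) - F_{\text{C}}(\vx^*) = \frac{\xi^2}{nL}(m-k)$.

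Next I would translate the hypotheses on $(L',\eps,m)$ into the hypotheses of Theorem~\ref{thm:convex:example} stated in terms of $(L,\eps,m)$. Substituting $L = \sqrt{n/2}\,L'$ into the condition $\eps \le \frac{B^2 L}{384 n}$ of Theorem~\ref{thm:convex:example} yields $\eps \le \frac{B^2}{384 n}\sqrt{n/2}\,L' = \frac{\sqrt{2}}{768}\frac{B^2 L'}{\sqrt{n}}$, which is exactly the assumed bound on $\eps$. Likewise $m = \floor{\sqrt{B^2 L/(24 n\eps)}} - 1 = \floor{\sqrt{B^2\sqrt{n/2}\,L'/(24 n\eps)}} - 1 = \floor{ \frac{\sqrt[4]{18}}{12} B n^{-1/4}\sqrt{L'/\eps}} - 1$ after simplifying the constant $\sqrt{\frac{1}{24\sqrt{2}}} = \frac{\sqrt[4]{18}}{12}$ (a routine arithmetic check). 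Hence the hypotheses of Theorem~\ref{thm:convex:example} are met with this $L$.

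Finally I would invoke Theorem~\ref{thm:convex:example} directly: in order to find $\hat{\vx}$ with $\E F_{\text{C}}(\hat{\vx}) - F_{\text{C}}(\vx^*) < \eps$, the algorithm $\gA$ needs at least $\Omega\!\left(n + B\sqrt{nL/\eps}\right)$ queries to $h_{F_{\text{C}}}$, and substituting $L = \sqrt{n/2}\,L'$ gives $B\sqrt{nL/\eps} = B\sqrt{n\sqrt{n/2}\,L'/\eps} = \Theta\!\left(B n^{3/4}\sqrt{L'/\eps}\right)$, so the lower bound is $\Omega\!\left(n + B n^{3/4}\sqrt{L'/\eps}\right)$ as claimed. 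I expect no real obstacle here: the only things to verify are (i) the average-smoothness constant comes out to exactly $L'$ under the scaling $L=\sqrt{n/2}\,L'$ (immediate from Proposition~\ref{prop:base}), and (ii) the constants in the $\eps$-threshold and in the formula for $m$ match after the substitution — both are elementary. The potential pitfall is only bookkeeping of the numerical constants (e.g. confirming $\sqrt{1/(24\sqrt2)} = \sqrt[4]{18}/12$), which I would carry out explicitly but which poses no conceptual difficulty. As with Theorem~\ref{thm:average:example:strongly}, using a possibly larger $m$ than strictly needed does not affect the argument.
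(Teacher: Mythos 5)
Your proposal is correct and follows essentially the same route the paper takes: set $L=\sqrt{n/2}\,L'$, observe via Proposition~\ref{prop:base} that $\{f_{\text{C},i}\}$ is then $L'$-average smooth, verify that the $\eps$-threshold and the formula for $m$ translate into the hypotheses of Theorem~\ref{thm:convex:example}, and then read off $\Omega(n+B\sqrt{nL/\eps})=\Omega(n+Bn^{3/4}\sqrt{L'/\eps})$. The constant checks ($\tfrac{1}{384\sqrt2}=\tfrac{\sqrt2}{768}$ and $\tfrac{1}{\sqrt{24\sqrt2}}=\tfrac{\sqrt[4]{18}}{12}$) are correct, so there is nothing to add.
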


\begin{proof}
Note that
\begin{align*}
    \eps &\le \frac{\sqrt{2}}{768} \frac{B^2 L'}{\sqrt{n}} = \frac{B^2 L}{384 n}, \\
    m &= \floor{ \frac{\sqrt[4]{18}}{12} B n^{-1/4}\sqrt{\frac{L'}{\eps}}} - 1 = \floor{ \sqrt{\frac{B^2 L}{24 n \eps}}} - 1.
\end{align*}
By Theorem \ref{thm:convex:example}, in order to find $\hat{\vx} \in \sR^m$ such that $\E F_{\text{C}}(\hat{\vx}) - F_{\text{C}}(\vx^{*}) < \eps$, $\gA$ needs at least $N$ queries to $h_{F_{\text{C}}}$, where
\begin{align*}
    N &= \Omega\left(n + B \sqrt{\frac{n L}{\eps}}\right) \\
    &= \Omega\left(n + B \sqrt{\frac{n \sqrt{n/2}L'}{\eps}}\right) \\
    &= \Omega\left(n + B n^{3/4} \sqrt{\frac{L'}{\eps}}\right).
\end{align*}
\end{proof}

Similar to Lemma \ref{lem:convex:example:2}, we also need the following lemma for the case $\eps > \frac{\sqrt{2}}{768} \frac{B^2 L'}{\sqrt{n}}$.
\begin{lemma}\label{lem:average:example:2}
For any PIFO algorithm $\gA$ and any $L, n, B, \eps$ such that $\eps \le LB^2 /4$, there exist n functions $\{f_i: \sR \rightarrow \sR\}_{i=1}^n$ which is $L$-average smooth,  such that $F(x)$ is convex and $\|x_0 - x^*\|_2 \le B$. In order to find $\hat{x} \in \sR$ such that $\E F(\hat{x}) - F(x^*) < \eps$, $\gA$ needs at least $\Omega(n)$ queries to $h_F$.
\end{lemma}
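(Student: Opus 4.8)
The plan is to recycle the one–dimensional hard instance from the proof of Lemma~\ref{lem:convex:example:2} almost verbatim; the only genuinely new point is the (easy) observation that its hypotheses already imply the ones required here. Concretely, I would take $g_1(x) = \frac{L}{2}x^2 - nLBx$ and $g_i(x) = \frac{L}{2}x^2$ for $2 \le i \le n$, and set $F(x) = \frac1n\sum_{i=1}^n g_i(x) = \frac{L}{2}x^2 - LBx$. Since each $g_i$ is $L$-smooth, the family $\{g_i\}_{i=1}^n$ is $L$-average smooth by the first item of Remark~\ref{remark:smooth}; since each $g_i$ is convex, $F$ is convex; and the minimizer $x^* = B$ gives $|x_0 - x^*| = B$ together with $F(0) - F(x^*) = \frac{LB^2}{2}$. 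Thus all the structural requirements of the lemma hold, and the weaker average-smoothness assumption is not exploited beyond this trivial embedding.

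The lower-bound argument then carries over without change. For $i > 1$ one has $g_i'(0) = 0$ and $\prox_{g_i}^{\gamma}(0) = 0$, so $x_t$ remains at $0$ until the first time step $T$ at which $\gA$ draws component $1$; here $T$ follows a geometric distribution with success probability $p_1 \le 1/n$. Hence, for $t \le \frac{1}{2p_1}$,
\[
\pr{T > t} \ge (1-p_1)^{1/(2p_1)} \ge (1-1/n)^{n/2} \ge \tfrac12
\qquad (n \ge 2),
\]
and therefore $\E F(x_t) - F(x^*) \ge \big(F(0) - F(x^*)\big)\cdot\tfrac12 = \frac{LB^2}{4} \ge \eps$ for every such $t$. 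Consequently $\gA$ needs at least $\frac{1}{2p_1} \ge n/2 = \Omega(n)$ queries to $h_F$, which is the claim.

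I do not expect a real obstacle here: the two points needing a line of checking are exactly those already handled in Lemma~\ref{lem:convex:example:2}, namely that the proximal oracle at $0$ returns $0$ for each inert quadratic $\frac{L}{2}x^2$ (so the algorithm genuinely cannot move before querying component~$1$), and that $(1-1/n)^{n/2} \ge \tfrac12$ uniformly over $n \ge 2$, which follows from monotonicity of $z \mapsto \frac{\log(1-z)}{2z}$. The one conceptual remark worth recording is the purpose of the lemma: combined with Theorem~\ref{thm:average:example:convex}, it handles the complementary regime of comparatively large $\eps$ in which $Bn^{3/4}\sqrt{L/\eps} = \gO(n)$, so that the $\Omega(n)$ term alone already gives the stated bound there --- exactly as Lemma~\ref{lem:convex:example:2} complements Theorem~\ref{thm:convex:example} in the strongly-smooth setting.
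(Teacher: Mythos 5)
Your proposal matches the paper's own proof of Lemma~\ref{lem:average:example:2}: the paper simply observes that the $\{g_i\}$ already constructed in the proof of Lemma~\ref{lem:convex:example:2} are $L$-average smooth (each $g_i$ being $L$-smooth, via Remark~\ref{remark:smooth}), so the identical lower-bound argument carries over. You have just spelled out what the paper leaves implicit; no new idea is needed, and your reasoning is correct.
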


\begin{proof}
Note that $\{g_i\}_{i=1}^n$ defined in above proof is also $L$-average smooth, so Lemma \ref{lem:average:example:2} hosts for the same reason.
\end{proof}

Similarly, note that if $\eps > \frac{\sqrt{2}}{768} \frac{B^2 L'}{\sqrt{n}}$, then $\Omega(n) = \Omega\left(n + B n^{3/4} \sqrt{\frac{L'}{\eps}}\right) $. In summary, we obtain  Theorem \ref{thm:average:convex}.

\section{Conclusion and Future Work}

In this paper we have studied lower bound of PIFO algorithm for smooth finite-sum optimization. We have given a tight lower bound of PIFO algorithms in the strongly convex case.
We have proposed a novel construction framework that is very useful to the analysis of proximal algorithms.
Based on this framework, We have also extended our result to non-strongly convex, average smooth problems and non-convex problems (see Appendix \ref{sec:nonconvex}). It would be interesting to prove tight lower bounds of proximal algorithms for non-smooth problems in future work.

\paragraph{Acknowledgements:} We thank Dachao Lin and Yuze Han for their helpful discussions about Lemma \ref{lem:geo}.

\bibliographystyle{plainnat}
\bibliography{reference}

\begin{thebibliography}{16}
\providecommand{\natexlab}[1]{#1}
\providecommand{\url}[1]{\texttt{#1}}
\expandafter\ifx\csname urlstyle\endcsname\relax
  \providecommand{\doi}[1]{doi: #1}\else
  \providecommand{\doi}{doi: \begingroup \urlstyle{rm}\Url}\fi

\bibitem[Agarwal and Bottou(2015)]{agarwal2015lower}
Alekh Agarwal and Leon Bottou.
\newblock A lower bound for the optimization of finite sums.
\newblock In \emph{ICML}, 2015.

\bibitem[Allen-Zhu(2017)]{allen2017katyusha}
Zeyuan Allen-Zhu.
\newblock Katyusha: The first direct acceleration of stochastic gradient
  methods.
\newblock \emph{Journal of Machine Learning Research}, 18\penalty0
  (1):\penalty0 8194--8244, 2017.

\bibitem[Allen-Zhu(2018)]{allen2018katyushax}
Zeyuan Allen-Zhu.
\newblock Katyusha {X}: Practical momentum method for stochastic
  sum-of-nonconvex optimization.
\newblock In \emph{ICML}, 2018.

\bibitem[Arjevani and Shamir(2015)]{ArjevaniS15Communication}
Yossi Arjevani and Ohad Shamir.
\newblock Communication complexity of distributed convex learning and
  optimization.
\newblock In \emph{NIPS}, 2015.

\bibitem[Carmon et~al.(2017)Carmon, Duchi, Hinder, and
  Sidford]{carmon2017lower}
Yair Carmon, John~C. Duchi, Oliver Hinder, and Aaron Sidford.
\newblock Lower bounds for finding stationary points {I}.
\newblock \emph{arXiv preprint arXiv:1710.11606}, 2017.

\bibitem[Defazio(2016)]{defazio2016simple}
Aaron Defazio.
\newblock A simple practical accelerated method for finite sums.
\newblock In \emph{NIPS}, 2016.

\bibitem[Defazio et~al.(2014)Defazio, Bach, and
  Lacoste-Julien]{defazio2014saga}
Aaron Defazio, Francis Bach, and Simon Lacoste-Julien.
\newblock {SAGA}: A fast incremental gradient method with support for
  non-strongly convex composite objectives.
\newblock In \emph{NIPS}, 2014.

\bibitem[Johnson and Zhang(2013)]{johnson2013accelerating}
Rie Johnson and Tong Zhang.
\newblock Accelerating stochastic gradient descent using predictive variance
  reduction.
\newblock In \emph{NIPS}, 2013.

\bibitem[Lan and Zhou(2017)]{lan2017optimal}
Guanghui Lan and Yi~Zhou.
\newblock An optimal randomized incremental gradient method.
\newblock \emph{Mathematical programming}, pages 1--49, 2017.

\bibitem[Nesterov(1983)]{nesterov1983method}
Yurii Nesterov.
\newblock A method for solving the convex programming problem with convergence
  rate o(1/k\^{}2).
\newblock In \emph{Dokl. akad. nauk Sssr}, volume 269, pages 543--547, 1983.

\bibitem[Nesterov(2013)]{nesterov2013introductory}
Yurii Nesterov.
\newblock \emph{Introductory lectures on convex optimization: A basic course},
  volume~87.
\newblock Springer Science \& Business Media, 2013.

\bibitem[Schmidt et~al.(2017)Schmidt, Le~Roux, and Bach]{schmidt2017minimizing}
Mark Schmidt, Nicolas Le~Roux, and Francis Bach.
\newblock Minimizing finite sums with the stochastic average gradient.
\newblock \emph{Mathematical Programming}, 162\penalty0 (1-2):\penalty0
  83--112, 2017.

\bibitem[Woodworth and Srebro(2016)]{woodworth2016tight}
Blake Woodworth and Nathan Srebro.
\newblock Tight complexity bounds for optimizing composite objectives.
\newblock In \emph{NIPS}, 2016.

\bibitem[Xiao and Zhang(2014)]{DBLP:journals/siamjo/Xiao014}
Lin Xiao and Tong Zhang.
\newblock A proximal stochastic gradient method with progressive variance
  reduction.
\newblock \emph{{SIAM} Journal on Optimization}, 24\penalty0 (4):\penalty0
  2057--2075, 2014.

\bibitem[Zhang et~al.(2013)Zhang, Mahdavi, and Jin]{zhang2013linear}
Lijun Zhang, Mehrdad Mahdavi, and Rong Jin.
\newblock Linear convergence with condition number independent access of full
  gradients.
\newblock In \emph{NIPS}, 2013.

\bibitem[Zhou and Gu(2019)]{zhou2019lower}
Dongruo Zhou and Quanquan Gu.
\newblock Lower bounds for smooth nonconvex finite-sum optimization.
\newblock In \emph{ICML}, 2019.

\end{thebibliography}

\newpage
\appendix
\section{Detailed Proof for Section \ref{sec:pre}}\label{appendix:sec:pre}
In this section, we use $\Norm{\A}$ to denote the spectral radius of $\A$.

For simplicity, let
\begin{align*}
    \B = \B(m, \omega) = 
\begin{bmatrix}
    & & & -1 & 1 \\
    & & -1 & 1 & \\
    & \Ddots & \Ddots & & \\
    -1 & 1 & & & \\
    \omega & & & & 
\end{bmatrix} 
    \in \sR^{m \times m}, 
\end{align*}
$\vb_l^{\top}$ is the $l$-th row of $\B$, and
$f_i(\vx) = r_i(\vx; \lambda_0, \lambda_1, \lambda_2, m, \omega)$.

Recall that
$$\gL_i = \{ l: 1 \le l \le m, l \equiv i - 1 (\bmod n) \}, i = 1, 2, \cdots, n.$$ 
For $1 \le i \le n$, let $\B_i$ be a submatrix which is formed from rows $\gL_i$ of $\B$, that is
\begin{align*}
    \B_i = \B[\gL_i;]
\end{align*}

Then $f_i$ can be wriiten as
\begin{align*}
f_i(\vx) &= \lambda_1 \norm{\B_i \vx}^2 + \lambda_2 \norm{\vx}^2 - \eta_i \inner{\ve_m}{\vx},
\end{align*}
where $\eta_1 = \lambda_0, \eta_i = 0, i \ge 2$.

\begin{proof}[\textbf{Proof of Proposition \ref{prop:base}}]
Note that 
\begin{align*}
    \inner{\vu}{\B_i^{\top}\B_i \vu} &= \norm{\B_i \vu}^2 \\
    &= \sum_{l \in \gL_i} (\vb_l^{\top} \vu)^2 \\\
    &= 
    \begin{cases}
    \sum_{l \in \gL_i\backslash \{m\}} (u_{m-l} - u_{m-l+1})^2 + \omega^2 u_m^2  ~~(\text{if } m \in \gL_i) \\
    \sum_{l \in \gL_i} (u_{m-l} - u_{m-l+1})^2 
    \end{cases} \\
    &\le 2 \norm{\vu}^2,
\end{align*}
where the last inequality is according to $(x + y)^2 \le 2(x^2 + y^2)$, and $|l_1 - l_2| \ge n \ge 2$ for $l_1, l_2 \in \gL_i$. \\
Hence, $\Norm{\B_i^{\top}\B_i} \le 2$, and
\begin{align*}
    \Norm{\nabla^2 f_i(\vx)} = \Norm{2 \lambda_1 \B_i^{\top} \B_i + 2 \lambda_2 \I} \le 4 \lambda_1 + 2 \lambda_2.
\end{align*}

\vskip 10pt
Next, observe that
\begin{align*}
    \norm{\nabla f_i(\vx) - \nabla f_i(\vy)}^2 
    = \norm{(2\lambda_1 \B_i^{\top} \B_i + 2\lambda_2 \I) (\vx - \vy)}^2
\end{align*}
Let $\vu = \vx - \vy$.\\
Note that
\begin{align*}
    \vb_l \vb_l^{\top} \vu = 
    \begin{cases}
    (u_{m-l} - u_{m-l+1}) (\ve_{m-l} - \ve_{m-l+1}), &l < m, \\
    \omega^2 u_1 \ve_1, &l = m.
    \end{cases}
\end{align*}

Thus, if $m \notin \gL_i$, then 
\begin{align*}
    &\quad \norm{(2\lambda_1 \B_i^{\top} \B_i + 2\lambda_2 \I) \vu}^2 \\
    &= \norm{2\lambda_1 \sum_{l \in \gL_i} (u_{m-l} - u_{m-l+1}) (\ve_{m-l} - \ve_{m-l+1}) + 2\lambda_2 \vu}^2 \\
    &= \sum_{m-l \in \gL_i} \left[(2\lambda_1 (u_{l} - u_{l+1}) + 2\lambda_2 u_{l})^2
    + (-2\lambda_1 (u_{l} - u_{l+1}) + 2\lambda_2 u_{l+1})^2 \right]
    + \sum_{\substack{m-l \notin \gL_i \\ m-l+1 \notin \gL_i}} (2\lambda_2 u_{l})^2 \\
    &\le \sum_{m-l \in \gL_i} 8\left[ (\lambda_1 + \lambda_2)^2 + \lambda_1^2 \right](u_l^2 + u_{l+1}^2) 
    + 4\lambda_2^2 \norm{\vu}^2.
\end{align*}

Similarly, if $m \in \gL_i$, then

\begin{align*}
    &\quad \norm{(2\lambda_1 \B_i^{\top} \B_i + 2\lambda_2 \I) \vu}^2 \\
    &\le \sum_{\substack{m-l \in \gL_i \\ l \neq 0}} 8\left[ (\lambda_1 + \lambda_2)^2 + \lambda_1^2 \right](u_l^2 + u_{l+1}^2) 
    + 4(\lambda_1 \omega^2 + \lambda_2)^2 u_1^2
    +  4\lambda_2^2 \norm{\vu}^2.
\end{align*}

Therefore, we have
\begin{align*}
    &\quad \frac{1}{n} \sum_{i=1}^n \norm{\nabla f_i(\vx) - \nabla f_i(\vy)}^2 \\
    &\le \frac{1}{n} \left[ \sum_{l=1}^{m-1} 8\left[ (\lambda_1 + \lambda_2)^2 + \lambda_1^2 \right](u_l^2 + u_{l+1}^2) 
    + 4(2\lambda_1  + \lambda_2)^2 u_1^2 \right] + 4 \lambda_2^2 \norm{\vu}^2 \\
    &\le \frac{16}{n} \left[ (\lambda_1 + \lambda_2)^2 + \lambda_1^2 \right] \norm{\vu}^2 + 4 \lambda_2^2 \norm{\vu}^2,
\end{align*}
where we have used $(2\lambda_1 + \lambda_2)^2 \le 2 \left[ (\lambda_1 + \lambda_2)^2 + \lambda_1^2 \right]$.

In summary, we get that $\{f_i\}_{1 \le i \le n}$ is $L'$-average smooth, where
\begin{align*}
    L' = 2\sqrt{\frac{4}{n} \left[ (\lambda_1 + \lambda_2)^2 + \lambda_1^2 \right] + \lambda_2^2}.
\end{align*}

\end{proof}

\begin{proof}[\textbf{Proof of Lemma \ref{lem:jump}}]
For $\vx \in \gF_k ~(k \ge 1)$, we have 
\begin{align*}
    \vb_l^{\top} \vx &= 0 \text{ for } l > k, \\
    \vb_l &\in \gF_k \text{ for } l < k, \\ 
    \vb_k &\in \gF_{k+1}.
\end{align*}
Consequently, for $l \neq k$, $\vb_l \vb_l^{\top} \vx = (\vb_l^{\top} \vx) \vb_l \in \gF_k $,
and $\vb_k \vb_k^{\top} \vx \in \gF_{k+1} $.

For $k = 0$, we have $\vx = \vzero$, and
\begin{align*}
\nabla f_1(\vx) = \lambda_0 \ve_m \in \gF_1, \\
\nabla f_j(\vx) = \vzero ~(j \ge 2).
\end{align*}

Moreover, we suppose $k \ge 1$, $k \in \gL_{i}$. Since
\begin{align*}
    \nabla f_j(\vx) &= 2 \lambda_1 \B_j^{\top} \B_j \vx + 2\lambda_2 \vx - \eta_j \ve_m \\
    &= 2\lambda_1 \sum_{l \in \gL_j} \vb_l^{\top} \vb_l \vx + 2\lambda_2 \vx - \eta_j \ve_m.
\end{align*}
Hence, $\nabla f_{i} (\vx) \in \gF_{k+1}$ and $\nabla f_j (\vx) \in \gF_{k} ~(j \neq i)$.

\vskip 5pt
Now, we turn to consider $\vu = \prox_{f_j}^{\gamma} (\vx)$. We have
\begin{align*}
    \left(2 \lambda_1 \B_j^{\top} \B_j + \left(2 \lambda_2 + \frac{1}{\gamma}\right)\I  \right) \vu = \eta_j \ve_m + \frac{1}{\gamma} \vx,
\end{align*}
i.e.,
\begin{align*}
    \vu = c_1 (\I + c_2 \B_j^{\top} \B_j)^{-1} \vy,
\end{align*}
where $c_1 = \frac{1}{2\lambda_2 + 1/\gamma}$, $c_2 = \frac{2\lambda_1}{2\lambda_2 + 1/\gamma}$, and $\vy = \eta_j \ve_m + \frac{1}{\gamma} \vx$. 

Note that 
\begin{align*}
    (\I + c_2 \B_j^{\top} \B_j)^{-1} = \I - \B_j^{\top} \left( \frac{1}{c_2}\I + \B_j \B_j^{\top} \right)^{-1} \B_j.
\end{align*}
If $k = 0$ and $j > 1$, we have $\vy = \vzero$ and $\vu = \vzero$. \\
If $k = 0$ and $j = 1$, we have $\vy = \lambda_0 \ve_m$. On this case, $\B_1 \ve_m = \vzero$, so $\vu = c_1 \vy \in \gF_1$. 

For $k \ge 1$, we know that $\vy \in \gF_k$.
And observe that if $|l - l'| \ge 2$, then $\vb_{l}^{\top} \vb_{l'} = 0$, and consequently $\B_j \B_j^{\top}$ is a diagonal matrix, 
so we can assume that $\frac{1}{c_2}\I + \B_j \B_j^{\top} = \diag(\beta_{j,1}, \cdots, \beta_{j, |\gL_j|})$.
Therefore,
\begin{align*}
    \vu = c_1 \vy - c_1 \sum_{s = 1}^{|\gL_j|} \beta_{j, s} \vb_{l_{j, s}} \vb_{l_{j, s}}^{\top} \vy, 
\end{align*}
where we assume that $\gL_j = \{l_{j, 1}, \cdots, l_{j, |\gL_j|}\}$.

Thus, we have $\prox_{f_{i}}^{\gamma} (\vx) \in \gF_{k+1}$ for $k \in \gL_i$
and $\prox_{f_{j}}^{\gamma} (\vx) \in \gF_{k} ~(j \neq i)$.

\end{proof}

\begin{proof}[\textbf{Proof of Corollary \ref{coro:stopping-time}}]
Denote 
\[\spn \{ \nabla f_{i_1}(\vx_0), \cdots, \nabla f_{i_t}(\vx_{t-1}), \prox_{f_{i_1}}^{\gamma_1} (\vx_0), \cdots, \prox_{f_{i_t}}^{\gamma_t} (\vx_{t-1}) \}\] 
by $\gM_t$.
We know that $\vx_t \in \gM_t$.

Suppose that $\gM_T \subseteq \gF_{k-1}$ for some $T$ and let $T' = \argmin{t: t > T, i_t \equiv k (\bmod ~n) }$.

By Lemma \ref{lem:jump}, for $T < t < T'$, 
we can use a simple induction to obtain that 
\[\spn\{ \nabla f_{i_{t}}(\vx_{t-1}), \prox_{f_{i_{t}}}^{\gamma_t} (\vx_{t-1}) \} \subseteq \gF_{k-1}\]
and $\gM_{t} \subseteq \gF_{k-1}$. 

Moreover, since $i_{T'} \equiv k (\bmod ~n)$, we have
\[\spn\{ \nabla f_{i_{T'}}(\vx_{T'-1}), \prox_{f_{i_{T'}}}^{\gamma_{T'}} (\vx_{T'-1}) \} \subseteq \gF_{k}\]
and $\gM_{T'} \subseteq \gF_{k}$.

Following from above statement, it is easily to check that for $t < T_k$, we have $\vx_t \in \gM_t \subseteq \gF_{k-1}$.

Next, note that 
\begin{align*}
    &\quad \pr{T_{k} - T_{k-1} = s} \\
    &= \pr{ i_{T_{k-1} + 1} \not\equiv k (\bmod ~n), \cdots,  i_{T_{k-1} + s - 1} \not\equiv k (\bmod ~n), i_{T_{k-1} + s} \equiv k (\bmod ~n)} \\
    &= \pr{ i_{T_{k-1} + 1} \neq k', \cdots,  i_{T_{k-1} + s - 1} \neq k', i_{T_{k-1} + s} = k'} \\
    &= (1 - p_{k'})^{s-1} p_{k'},
\end{align*}
where $k' \equiv k (\bmod ~n), 1 \le k' \le n$. So $T_k - T_{k-1}$ is a geometric random variable with success probability $p_{k'}$.

On the other hand, $T_k - T_{k-1}$ is just dependent on $i_{T_{k-1} + 1}, \cdots, i_{T_k}$, thus for $l \neq k$, $T_l - T_{l-1}$ is independent with $T_k - T_{k-1}$.

Therefore, 
\begin{align*}
    T_k = \sum_{l=1}^k (T_l - T_{l-1}) = \sum_{i=1}^k Y_l,
\end{align*}
where $Y_l$ follows a geometric distribution with success probability $q_l = p_{l'}$ where $l' \equiv l (\bmod n), 1 \le l' \le n$. 

\end{proof}

\begin{proof}[\textbf{Proof of Remark \ref{remark:smooth}}]
If each $f_i$ is $L$-smooth, then for any $\vx, \vy \in \sR^m$ we have
\begin{align*}
    \norm{\nabla f_i(\vx) - \nabla f_i(\vy)}^2 \le L^2 \norm{\vx - \vy}^2,
\end{align*}
and consequently,
\begin{align}
    \frac{1}{n} \sum_{i=1}^n \norm{\nabla f_i(\vx) - \nabla f_i(\vy)}^2 \le L^2 \norm{\vx - \vy}^2.
\end{align}

\vskip 10pt
If $\{f_i\}_{i=1}^n$ is $L$-average smooth, then for any $\vx, \vy \in \sR^m$ we have
\begin{align*}
    \norm{\nabla f(\vx) - \nabla f(\vy)}^2 &= \frac{1}{n^2} \norm{\sum_{i=1}^n (\nabla f_i(\vx) - \nabla f_i(\vy))}^2 \\
    &\le \frac{1}{n^2} \left(\sum_{i=1}^n \norm{\nabla f_i(\vx) - \nabla f_i(\vy)} \right)^2 \\
    &\le \frac{1}{n} \sum_{i=1}^n \norm{\nabla f_i(\vx) - \nabla f_i(\vy)}^2 \\
    &\le L^2 \norm{\vx - \vy}^2.
\end{align*}

\end{proof}
\section{Results about Sum of Geometric Distributed Random Variables}\label{sec:geo}
\begin{lemma}
Let $X_1 \sim \geo(p_1), X_2 \sim \geo(p_2)$ be independent random variables. 
For any positive integer $j$, if $p_1 \neq p_2$, then 
\begin{align}\label{pr-neq}
\pr{X_1 + X_2 > j} = \frac{p_2 (1 - p_1)^j - p_1 (1 - p_2)^j}{p_2 - p_1},
\end{align}
and if $p_1 = p_2$, then 
\begin{align}\label{pr-eq}
\pr{X_1 + X_2 > j} = j p_1(1 - p_1)^{j-1} + (1 - p_1)^j.
\end{align}
\end{lemma}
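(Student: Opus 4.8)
The plan is to compute the tail probability $\pr{X_1 + X_2 > j}$ directly by conditioning on the value of $X_1$ and summing, which is the most elementary route and avoids any generating-function machinery. Recall that for $X \sim \geo(p)$ we have $\pr{X = s} = (1-p)^{s-1} p$ for $s \ge 1$ and the tail $\pr{X > s} = (1-p)^s$. The event $\{X_1 + X_2 > j\}$ decomposes according to whether $X_1 \ge j$ (in which case $X_1 + X_2 > j$ automatically, since $X_2 \ge 1$) or $X_1 = s$ for some $1 \le s \le j-1$ (in which case we need $X_2 > j - s$).

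Concretely, I would write
\begin{align*}
    \pr{X_1 + X_2 > j} &= \pr{X_1 \ge j} + \sum_{s=1}^{j-1} \pr{X_1 = s} \pr{X_2 > j - s} \\
    &= (1 - p_1)^{j-1} + \sum_{s=1}^{j-1} (1 - p_1)^{s-1} p_1 (1 - p_2)^{j - s}.
\end{align*}
The remaining work is to evaluate the finite geometric sum $\sum_{s=1}^{j-1} (1-p_1)^{s-1}(1-p_2)^{j-s}$. Pulling out $(1-p_2)^{j-1}$ and writing the summand as $\left(\frac{1-p_1}{1-p_2}\right)^{s-1}$, this is a geometric series with ratio $\rho = \frac{1-p_1}{1-p_2}$. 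When $p_1 \neq p_2$ we have $\rho \neq 1$, so the sum equals $(1-p_2)^{j-1} \cdot \frac{\rho^{j-1} - 1}{\rho - 1}$; substituting back and simplifying (the denominator $\rho - 1$ becomes $\frac{p_2 - p_1}{1 - p_2}$) yields, after combining with the $(1-p_1)^{j-1}$ term, exactly $\frac{p_2(1-p_1)^j - p_1(1-p_2)^j}{p_2 - p_1}$. When $p_1 = p_2$ we have $\rho = 1$, so the sum is simply $(j-1)(1-p_1)^{j-1}$, and adding the $(1-p_1)^{j-1}$ term and rewriting $p_1 (1-p_1)^{j-1} \cdot (j-1) + (1-p_1)^{j-1}$ — wait, more carefully: the second term is $\sum_{s=1}^{j-1} p_1 (1-p_1)^{j-1} = (j-1) p_1 (1-p_1)^{j-1}$, and combined with $(1-p_1)^{j-1}$ we still need one more $p_1$-factor bookkeeping step to land on $j p_1 (1-p_1)^{j-1} + (1-p_1)^j$; I would double-check this by noting $(1-p_1)^{j-1} = p_1(1-p_1)^{j-1} + (1-p_1)^j$.

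There is no serious obstacle here; the only thing requiring care is the algebraic simplification in the $p_1 \neq p_2$ case, where it is easy to mishandle the signs when clearing the denominator $\rho - 1 = \frac{(1-p_1)-(1-p_2)}{1-p_2} = \frac{p_2 - p_1}{1-p_2}$. An alternative, perhaps cleaner, derivation uses symmetry: condition instead on $\min(X_1, X_2)$ or simply verify the claimed formula by induction on $j$ using the recursion $\pr{X_1 + X_2 > j} = \pr{X_1 + X_2 > j-1} - \pr{X_1 + X_2 = j}$ together with the known convolution formula for $\pr{X_1 + X_2 = j}$. I would present the direct summation as the main argument since it is self-contained, and I expect the whole proof to be under half a page.
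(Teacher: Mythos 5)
Your proposal is correct and takes essentially the same route as the paper: condition on the value of $X_1$, sum a finite geometric series, and split into the cases $\rho = (1-p_1)/(1-p_2) \neq 1$ and $\rho = 1$. The only difference is immaterial bookkeeping — you cut the sum at $s = j-1$ and add $\pr{X_1 \ge j} = (1-p_1)^{j-1}$, whereas the paper sums to $l = j$ (where $\pr{X_2 > 0} = 1$) and adds $\pr{X_1 > j} = (1-p_1)^j$; both lead to the stated formulas after the same simplification.
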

\begin{proof}
\begin{align*}
    \pr{X_1 + X_2 > j}
    &= \sum_{l=1}^{j} \pr{X_1 = l} \pr{X_2 > j - l} 
    + \pr{X_1 > j} \\
    &= \sum_{l=1}^{j} (1-p_1)^{l-1} p_1 (1-p_2)^{j-l} + (1-p_1)^j \\
    &= p_1 (1-p_2)^{j-1} \sum_{l=1}^{j} \left( \frac{1-p_1}{1-p_2} \right)^{l-1}
    + (1-p_1)^j
\end{align*}

Thus if $p_1 = p_2$, $\pr{X_1 + X_2 > j} = j p_1(1 - p_1)^{j-1} + (1 - p_1)^j$.

For $p_1 \neq p_2$, 
\begin{align*}
    \pr{X_1 + X_2 > j}
    &= p_1 \frac{(1 - p_1)^j - (1 - p_2)^j}{p_2 - p_1} + (1-p_1)^j \\
    &= \frac{p_2 (1 - p_1)^j - p_1 (1 - p_2)^j}{p_2 - p_1}.
\end{align*}
\end{proof}

\begin{lemma}
For $x \ge 0$ and $j \ge 2$,
\begin{align}\label{ineq-1}
    1 - \frac{j-1}{x + j/2} \le \left(\frac{x}{x+1}\right)^{j-1}.
\end{align}
\end{lemma}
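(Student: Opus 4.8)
The plan is to reduce (\ref{ineq-1}) to a one‑variable convexity statement via the substitution $t = 1/(x+1)$, which maps $x\in[0,\infty)$ onto $t\in(0,1]$ and turns $\tfrac{x}{x+1}$ into $1-t$. A short computation gives $x + j/2 = \tfrac{2+(j-2)t}{2t}$, hence $1 - \tfrac{j-1}{x+j/2} = \tfrac{2-jt}{2+(j-2)t}$; since $2+(j-2)t>0$ for $t>0$ and $j\ge 2$, cross‑multiplication is legitimate and (\ref{ineq-1}) becomes equivalent to
\[
g(t) := (1-t)^{j-1}\bigl(2+(j-2)t\bigr) - (2-jt) \ge 0 \qquad \text{for } t\in[0,1],
\]
where the endpoint $t=0$ is included for free since $g$ is continuous there.

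Next I would establish $g\ge 0$ by a ``$g(0)=g'(0)=0$ and $g''\ge 0$'' argument. Directly $g(0)=2-2=0$; differentiating,
\[
g'(t) = -(j-1)(1-t)^{j-2}\bigl(2+(j-2)t\bigr) + (j-2)(1-t)^{j-1} + j,
\]
so $g'(0) = -2(j-1)+(j-2)+j = 0$; and one more differentiation, after the two $(1-t)$‑power terms combine, yields
\[
g''(t) = (j-1)(j-2)(1-t)^{j-3}\Bigl[\bigl(2+(j-2)t\bigr) - 2(1-t)\Bigr] = j(j-1)(j-2)\,t\,(1-t)^{j-3},
\]
which is $\ge 0$ on $[0,1]$ for every integer $j\ge 3$. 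Then $g'$ is nondecreasing on $[0,1]$ with $g'(0)=0$, so $g'\ge 0$; hence $g$ is nondecreasing with $g(0)=0$, so $g\ge 0$ on $[0,1]$, which is the claim. The case $j=2$ is peeled off separately: there $g(t) = 2(1-t) - (2-2t) \equiv 0$, so (\ref{ineq-1}) holds with equality.

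I do not expect a serious obstacle; the only points needing care are (i) verifying the algebraic identity $1-\tfrac{j-1}{x+j/2} = \tfrac{2-jt}{2+(j-2)t}$ together with $2+(j-2)t>0$, so that the reduction to $g\ge 0$ is valid, and (ii) tracking the exponent $j-3$ in $g''$ — for $j=3$ it is the constant $1$, for $j\ge 4$ it merely vanishes at $t=1$, and only $j=2$ (trivial anyway) would make it singular, which is exactly why that case is separated first. An alternative that avoids the substitution is to clear denominators directly and prove $(a-1)^{j-1}(2a+j-2)\ge(2a-j)a^{j-1}$ for $a=x+1\ge 1$ by the same $C^2$ argument in the variable $a$ (here $h(a)$ and $h'(a)$ vanish to higher order at $a=1$ and $h''$ has a clean sign), but the $t$‑substitution keeps the derivative computations shortest.
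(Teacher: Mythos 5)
Your proof is correct, and it takes a genuinely different route from the paper's. You substitute $t = 1/(x+1)$, reduce the claim to $g(t) := (1-t)^{j-1}\bigl(2+(j-2)t\bigr) - (2-jt) \ge 0$ on $[0,1]$, and establish this by the Taylor-at-the-endpoint argument $g(0)=g'(0)=0$, $g''(t) = j(j-1)(j-2)\,t\,(1-t)^{j-3}\ge 0$, with $j=2$ peeled off as the degenerate (identically zero) case. I checked the algebraic reduction, the two derivative evaluations at $t=0$, the simplification of $g''$, and the sign bookkeeping for $j=2,3$ and $j\ge 4$; all are correct, and the positivity of $2+(j-2)t$ (needed to cross-multiply) is correctly noted. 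The paper instead clears denominators in the original variable $x$, rearranges to $\sum_{l=0}^{j-2}\bigl[\binom{j}{l} - \tfrac{j}{2}\binom{j-1}{l}\bigr]x^l \le 0$, and observes via $\tfrac{j}{2}\binom{j-1}{l} = \tfrac{j-l}{2}\binom{j}{l}$ that every coefficient with $l\le j-2$ is nonpositive; the $x^{j}$ and $x^{j-1}$ coefficients cancel. The paper's argument is shorter once you see the binomial identity, and it proves the sharper fact that the difference is a polynomial in $x$ with all nonpositive coefficients. Your argument is more mechanical (compute two derivatives and a second derivative, check signs) and does not require spotting the coefficient identity; it also makes the equality cases at $t=0$ ($x\to\infty$) and $j=2$ transparent. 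Either is a fine proof of the lemma.
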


\begin{proof}
We just need to show that 
\begin{align*}
    (x + 1)^{j - 1}(x + j/2) - (j-1)(x+1)^{j-1} \le x^{j-1}(x + j/2),
\end{align*}
that is
\begin{align*}
    &(x+1)^j - j (x+1)^{j-1}/2 - x^{j-1}(x + j/2) \le 0, \\
    &\text{ i.e., } \sum_{l=0}^{j-2} \left[ \binom{j}{l} - \frac{j}{2} \binom{j-1}{l} \right] x^l \le 0.
\end{align*}
Note that for $l \le j-2$,
\begin{align*}
    \binom{j}{l} - \frac{j}{2} \binom{j-1}{l} = \left(1 - \frac{j-l}{2}\right)\binom{j}{l} \le 0,
\end{align*}
thus inequality (\ref{ineq-1}) hosts for $x \ge 0$ and $j \ge 2$.
\end{proof}

\begin{lemma}\label{lem:two-geo}
Let $X_1 \sim \geo(p_1), X_2 \sim \geo(p_2), Y_1, Y_2 \sim \geo\left(\frac{p_1 + p_2}{2}\right)$ be independent random variables 
with $0 < p_1 \le p_2 \le 1$. Then for any positive integer $j$, we have
\begin{align*}
    \sP\left( X_1 + X_2 > j \right) \ge \sP\left( Y_1 + Y_2 > j \right).
\end{align*}
\end{lemma}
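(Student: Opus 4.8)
The plan is to reduce everything to the closed forms for $\sP(X_1+X_2>j)$ from \eqref{pr-neq} and \eqref{pr-eq}, and then to show that among all pairs of geometric parameters with a prescribed sum, the symmetric pair minimizes the tail of the sum. Introduce $a=(p_1+p_2)/2$, $\delta=(p_2-p_1)/2\ge 0$, and $u=1-a$, so that $1-p_1=u+\delta$, $1-p_2=u-\delta$, $p_2=a+\delta$, $p_1=a-\delta$, and note $a\in(0,1]$, $u\in[0,1)$ since $0<p_1\le p_2\le 1$. By \eqref{pr-neq} (and \eqref{pr-eq} in the limiting case $\delta=0$), for every positive integer $j$,
\[
\sP(X_1+X_2>j)=g(\delta):=\frac{N(\delta)}{2\delta},\qquad N(\delta):=(a+\delta)(u+\delta)^j-(a-\delta)(u-\delta)^j,
\]
interpreting $g(0)=\lim_{\delta\to 0}g(\delta)$. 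On the other hand $Y_1,Y_2\sim\geo(a)$, so \eqref{pr-eq} gives $\sP(Y_1+Y_2>j)=ja\,u^{\,j-1}+u^{\,j}$. Hence it suffices to prove $g(\delta)\ge g(0)$ and to check $g(0)=ja\,u^{\,j-1}+u^{\,j}$.

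The key structural fact is that $N$ is an \emph{odd} polynomial in $\delta$: $N(-\delta)=-N(\delta)$. Therefore $N(\delta)=\sum_{k\ \mathrm{odd}}c_k\delta^k$, the quotient $g(\delta)=\tfrac12\sum_{k\ \mathrm{odd}}c_k\delta^{k-1}$ is a genuine polynomial in $\delta^2$, and its constant term is $\tfrac12 c_1=g(0)$. Expanding the two summands of $N$ by the binomial theorem, the even-power contributions cancel and one reads off, for odd $k$ with $1\le k\le j$,
\[
c_k=2\left[a\binom{j}{k}u^{\,j-k}+\binom{j}{k-1}u^{\,j-k+1}\right]\ge 0,
\]
since $a\ge 0$ and $u\ge 0$; the top-degree coefficient $c_{j+1}=1+(-1)^j\ge 0$ (nonzero only when $j$ is even) must be recorded separately. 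In particular $c_1=2\big[ja\,u^{\,j-1}+u^{\,j}\big]$, confirming $g(0)=\tfrac12 c_1=ja\,u^{\,j-1}+u^{\,j}=\sP(Y_1+Y_2>j)$, and
\[
g(\delta)=g(0)+\tfrac12\!\!\sum_{\substack{k\ \mathrm{odd}\\ k\ge 3}}\!\!c_k\,\delta^{k-1}\ \ge\ g(0),
\]
which is the asserted inequality. The case $p_1=p_2$ is trivial since then $X_i$ and $Y_i$ are identically distributed.

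I do not anticipate a real obstacle: the argument is an exercise in binomial bookkeeping, and the only points needing mild care are (i) verifying that the reparametrization keeps $u=1-(p_1+p_2)/2\ge 0$, so that every power $u^{\,j-k}$ entering $c_k$ is nonnegative, and (ii) isolating the leading coefficient $c_{j+1}$, for which the generic formula would spuriously involve $\binom{j}{j+1}$. A more probabilistic alternative would be to build an explicit coupling exhibiting $X_1+X_2$ as stochastically larger than $Y_1+Y_2$, but since the exact survival formulas are already available, the coefficient-positivity route above is the most direct way to conclude.
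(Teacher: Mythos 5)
Your proof is correct, and it takes a genuinely different route from the paper's. The paper fixes $c=p_1+p_2$, treats $\sP(X_1+X_2>j)$ as a function of $p_1$ alone, differentiates the expression from \eqref{pr-neq}, and shows the derivative is negative for $p_1<c/2$ via the auxiliary inequality \eqref{ineq-1}; monotonicity then forces the minimum at $p_1=p_2=c/2$. You instead pass to the symmetric coordinates $a=(p_1+p_2)/2$, $\delta=(p_2-p_1)/2$, $u=1-a$, observe that the numerator $N(\delta)=(a+\delta)(u+\delta)^j-(a-\delta)(u-\delta)^j$ is an odd polynomial, so that $g(\delta)=N(\delta)/(2\delta)$ is a polynomial in $\delta^2$, and read off from the binomial expansion that every nonconstant coefficient
\[
c_k=2\Bigl[\,a\binom{j}{k}u^{\,j-k}+\binom{j}{k-1}u^{\,j-k+1}\Bigr],\qquad k \text{ odd},\ 3\le k\le j+1,
\]
is nonnegative (using $a\ge 0$, $u\ge 0$), with the $k=j+1$ term handled by the convention $\binom{j}{j+1}=0$. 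The constant term $\tfrac12 c_1=jau^{\,j-1}+u^{\,j}$ matches \eqref{pr-eq}, so $g(\delta)\ge g(0)=\sP(Y_1+Y_2>j)$. This is purely algebraic: no differentiation, no separate technical lemma, and it makes the gap $g(\delta)-g(0)$ explicit as a sum of manifestly nonnegative terms in $\delta^2$. The paper's calculus route gives strict monotonicity of the tail in $p_1$ along a fixed $c$ (slightly more than is needed here), at the cost of the inequality \eqref{ineq-1} and a more delicate derivative computation. Both are valid; yours is the more self-contained and elementary of the two.
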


\begin{proof}
If $j = 1$, 
then $\sP\left( X_1 + X_2 > j \right) = 1 = \sP\left( Y_1 + Y_2 > j \right)$. \\
If $p_1 = p_2 = 1$, then $\sP\left( X_1 + X_2 > j \right) = 0 = \sP\left( Y_1 + Y_2 > j \right)$ for $j \ge 2$. 

\vskip 10pt
Let $j \ge 2$, and $c \triangleq p_1 + p_2 < 2$ be a given constant. 

We prove that $f(p_1) \triangleq \pr{X_1 + X_2 > j}$ is a decreasing function.

\vskip 10pt
Employing equation (\ref{pr-neq}), for $p_1 < c/2$, we have
\begin{align*}
    f(p_1) = \frac{(c - p_1)(1 - p_1)^j - p_1(1 + p_1 - c)^j}{c - 2p_1},
\end{align*}
and 
\begin{align*}
    f'(p_1) &= \frac{-(1 - p_1)^j - j(c - p_1)(1 - p_1)^{j-1} 
    - (1 + p_1 - c)^j - jp_1(1 + p_1 - c)^{j-1}}{c - 2p_1} \\
    &+ 2\frac{(c - p_1)(1 - p_1)^j - p_1(1 + p_1 - c)^j}{(c - 2p_1)^2} \\
    &= \frac{[c(1 - p_1) - j (c - p_1)(c - 2p_1)](1 - p_1)^{j-1}
    - [c(1 + p_1 - c) + jp_1(c - 2p_1)](1 + p_1 - c)^{j-1}}{(c - 2p_1)^2}.
\end{align*}

Hence $f'(p_1) < 0$ is equivalent to
\begin{align}\label{lem:two-geo:eq}
    \frac{c(1 - p_1) - j (c - p_1)(c - 2p_1)}{c(1 + p_1 - c) + jp_1(c - 2p_1)} 
    < \left( \frac{1 + p_1 - c}{1 - p_1} \right)^{j-1}.
\end{align}

Note that
\begin{align*}
    &\quad \frac{c(1 - p_1) - j (c - p_1)(c - 2p_1)}{c(1 + p_1 - c) + jp_1(c - 2p_1)} \\
    &= 1 - \frac{(j-1)c(c - 2p_1)}{c(1 + p_1 - c) + jp_1(c - 2p_1)} \\
    &= 1 - \frac{j-1}{\frac{1 + p_1 - c}{c - 2p_1} + j\frac{p_1}{c}}
\end{align*}

Denote $x = \frac{1 + p_1 - c}{c - 2p_1}$. If $c \le 1$, then $p_1 > 0$ and $x > \frac{1-c}{c} \ge 0$. 
And if $c > 1$, then $p_1 \ge c - 1$ and $x \ge \frac{1 + c - 1 - c}{2 - c} = 0$. \\
Rewrite inequality (\ref{lem:two-geo:eq}) as 
\begin{align*}
    1 - \frac{j-1}{x + j p_1 / c} < \left(\frac{x}{x+1}\right)^{j-1}.
\end{align*}

Recall inequality (\ref{ineq-1}), we have
\begin{align*}
    \left(\frac{x}{x+1}\right)^{j-1} \ge 1 - \frac{j-1}{x + j/2} > 1 - \frac{j-1}{x + j p_1 / c}.
\end{align*}
Consequently, $f'(p_1) < 0$ hosts for $p_1 < c/2$ and $j \ge 2$. \\
With the fact that $\lim_{p_1 \rightarrow c/2} f(p_1) = f(c/2)$ according to equation (\ref{pr-eq}), 
we have 
\begin{align*}
    \sP\left( X_1 + X_2 > j \right) \ge \sP\left( Y_1 + Y_2 > j \right).
\end{align*}
for any positive integer $j$ and $0 < p_1 \le p_2 \le 1$.
\end{proof}

\begin{corollary}\label{coro:two-geo-plusz}
Let $X_1 \sim \geo(p_1), X_2 \sim \geo(p_2), Y_1, Y_2 \sim \geo\left(\frac{p_1 + p_2}{2}\right)$ be independent random variables with $0 < p_1 \le p_2 \le 1$.
Suppose $Z$ is a random variable that takes nonnegative integer values, 
and $Z$ is independent with $X_1, X_2, Y_1, Y_2$.
Then for any positive integer $j$, we have
\begin{align*}
    \pr{Z + X_1 + X_2 > j} \ge \pr{Z + Y_1 + Y_2 > j}.
\end{align*}
\end{corollary}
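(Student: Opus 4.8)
The plan is to condition on $Z$ and reduce the statement to Lemma \ref{lem:two-geo} applied term by term. Because $X_1, X_2, Y_1, Y_2$ are all independent of $Z$ and $Z$ takes values in the nonnegative integers, the law of total probability will give
\begin{align*}
\pr{Z + X_1 + X_2 > j} = \sum_{z=0}^{\infty} \pr{Z = z}\, \pr{X_1 + X_2 > j - z},
\end{align*}
together with the analogous identity for $Y_1 + Y_2$. So it suffices to prove $\pr{X_1 + X_2 > j - z} \ge \pr{Y_1 + Y_2 > j-z}$ for every integer $z \ge 0$ and then sum these inequalities against the weights $\pr{Z=z}$.

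To establish the term-wise inequality I would split into two cases. When $z \ge j$ the threshold $j - z$ is nonpositive, and since a geometric random variable is always at least $1$ we have $X_1 + X_2 \ge 2$ and $Y_1 + Y_2 \ge 2$ almost surely, so both probabilities equal $1$ and the inequality is trivial. When $0 \le z \le j - 1$ the threshold $j - z$ is a positive integer, so Lemma \ref{lem:two-geo} applies verbatim with $j$ replaced by $j - z$, which gives exactly the desired inequality for that term.

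Finally, summing the term-wise inequalities against the nonnegative weights $\pr{Z = z}$ yields the corollary. I do not expect any genuine difficulty here; the only point that needs care is the boundary case $j - z \le 0$, which must be treated separately because Lemma \ref{lem:two-geo} is stated only for positive integer thresholds.
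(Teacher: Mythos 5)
Your proposal is correct and follows essentially the same route as the paper: condition on $Z$, apply Lemma~\ref{lem:two-geo} to each term with a positive threshold, and note that terms with $z \ge j$ contribute probability $1$ to both sides (the paper simply groups those into a single $\pr{Z > j-1}$ term). The only difference is cosmetic.
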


\begin{proof}
With applying Lemma \ref{lem:two-geo}, we have
\begin{align*}
    \pr{Z + X_1 + X_2 > j} &= \sum_{l=0}^{j - 1} \pr{Z = l} \pr{X_1 + X_2 > l - j} + \pr{Z > j - 1} \\
    &\ge \sum_{l=0}^{j - 1} \pr{Z = l} \pr{Y_1 + Y_2 > l - j} + \pr{Z > j - 1} \\
    &= \pr{Z + Y_1 + Y_2 > j}.
\end{align*}
\end{proof}

\begin{corollary}
Let $\{X_i\}_{1 \le i \le m}$ be independent variables, and $X_i$ follows a geometric distribution with success probability $p_i$. 
For any positive integer $j$, we have
\begin{align*}
    \pr{\sum_{i=1}^m X_i \ge j} \ge \pr{\sum_{i=1}^m Y_i \ge j},
\end{align*}
where $\{Y_i\}_{1 \le i \le m}$ are i.i.d. random variables, $Y_i \sim \geo(\sum_{i=1}^m p_i/m)$, 
and $Y_i$ is independent with $X_{i'} (1 \le i' \le m)$.
\end{corollary}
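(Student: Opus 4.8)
The plan is to prove the bound by a smoothing (Robin–Hood) argument built on Corollary~\ref{coro:two-geo-plusz}: starting from $(p_1,\dots,p_m)$ I will repeatedly replace the current largest and smallest parameters by their arithmetic mean, argue that each such replacement can only decrease $\pr{\sum_i X_i \ge j}$, check that iterating the replacement drives the parameter vector to the constant vector $(\bar p,\dots,\bar p)$ with $\bar p=\frac1m\sum_i p_i$, and finish with a continuity argument.

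Concretely, set $p^{(0)}=(p_1,\dots,p_m)$ and, given $p^{(t)}$, pick indices $a\in\argmax_k p^{(t)}_k$ and $b\in\argmin_k p^{(t)}_k$ and obtain $p^{(t+1)}$ by replacing both $p^{(t)}_a$ and $p^{(t)}_b$ with $\tfrac12(p^{(t)}_a+p^{(t)}_b)$, leaving the other coordinates fixed; this preserves $\sum_k p^{(t)}_k=m\bar p$. To control the dynamics I would use the potential $\Phi^{(t)}=\sum_k(p^{(t)}_k-\bar p)^2$. Writing $u=p^{(t)}_a-\bar p\ge 0\ge v=p^{(t)}_b-\bar p$, a one-line computation gives $\Phi^{(t)}-\Phi^{(t+1)}=\tfrac12(u-v)^2=\tfrac12(p^{(t)}_a-p^{(t)}_b)^2$, and since $p^{(t)}_a-p^{(t)}_b\ge\max_k|p^{(t)}_k-\bar p|\ge\sqrt{\Phi^{(t)}/m}$ we obtain $\Phi^{(t+1)}\le(1-\tfrac{1}{2m})\Phi^{(t)}$, hence $\Phi^{(t)}\to 0$ geometrically and $p^{(t)}\to(\bar p,\dots,\bar p)$.

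For the monotonicity of the tail I would fix $j$ and first handle $j=1$ trivially (both sides equal $1$, as every geometric variable here is $\ge 1$). For $j\ge 2$, let $X^{(t)}_1,\dots,X^{(t)}_m$ be independent with $X^{(t)}_k\sim\geo(p^{(t)}_k)$ and put $Z=\sum_{k\ne a,b}X^{(t)}_k$, which is a nonnegative-integer-valued random variable independent of $X^{(t)}_a$ and $X^{(t)}_b$. Applying Corollary~\ref{coro:two-geo-plusz} with the positive integer $j-1$ in place of $j$ gives $\pr{\sum_k X^{(t)}_k > j-1}\ge\pr{Z+Y_1+Y_2 > j-1}$, where $Y_1,Y_2\sim\geo\big(\tfrac12(p^{(t)}_a+p^{(t)}_b)\big)$ are independent of $Z$; the left side equals $\pr{\sum_k X^{(t)}_k\ge j}$ and the right side is exactly $\pr{\sum_k X^{(t+1)}_k\ge j}$. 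Thus $g(p):=\pr{\sum_k X_k\ge j}$, regarded as a function of the parameter vector, is nonincreasing along the sequence $p^{(0)},p^{(1)},\dots$

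To conclude I would note that $g$ is a polynomial in $(p_1,\dots,p_m)$ — it equals $1$ minus the finite sum of $\prod_k(1-p_k)^{s_k-1}p_k$ over compositions with $s_k\ge 1$ and $s_1+\dots+s_m\le j-1$ — hence continuous. Combining, $g(p^{(0)})\ge g(p^{(t)})$ for every $t$ and $g(p^{(t)})\to g(\bar p,\dots,\bar p)=\pr{\sum_i Y_i\ge j}$, so $\pr{\sum_i X_i\ge j}\ge\pr{\sum_i Y_i\ge j}$. I expect the only non-mechanical point to be this passage to the limit: finitely many $\tfrac12$-averagings generally cannot produce the exact constant vector (only when $m$ is a power of $2$), so one genuinely needs both $p^{(t)}\to(\bar p,\dots,\bar p)$ and the continuity of $g$; everything else reduces to Corollary~\ref{coro:two-geo-plusz} plus the elementary potential estimate.
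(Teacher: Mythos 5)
Your proof is correct and follows the same underlying strategy as the paper's --- a smoothing argument built on Corollary~\ref{coro:two-geo-plusz} --- but it is considerably more careful. The paper's own proof is essentially a one-line gesture: after stating the single-step averaging inequality it simply asserts that ``this fact implies'' the uniform parameter vector minimizes the tail probability, without addressing how to go from the one-step inequality to the global conclusion. You correctly identify and fill this gap: finitely many $\tfrac12$-averagings reach the exact constant vector only in special cases, so one needs convergence plus continuity. Your potential computation $\Phi^{(t)}-\Phi^{(t+1)}=\tfrac12\bigl(p_a^{(t)}-p_b^{(t)}\bigr)^2\ge\Phi^{(t)}/(2m)$ and the resulting geometric decay are clean, and the continuity of the tail probability as a polynomial in $(p_1,\dots,p_m)$ is exactly what is needed to pass to the limit. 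You also handle two details the paper glosses over: the conversion between $\pr{\cdot>j-1}$ and $\pr{\cdot\ge j}$ for integer-valued variables when invoking Corollary~\ref{coro:two-geo-plusz}, and the trivial case $j=1$ where both sides equal $1$.
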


\begin{proof}
Let 
\begin{align*}
    f(p_1, p_2, \cdots, p_m) \triangleq \pr{\sum_{i=1}^m X_i \ge j}.
\end{align*}
Our goal is to minimize $f(p_1, p_2, \cdots, p_m)$ such that $\sum_{i=1}^m p_i = S \le m$.

By Corollary \ref{coro:two-geo-plusz}, we know that 
\begin{align*}
    f(p_1, p_2, \cdots, p_i, \cdots, p_j, \cdots, p_m) 
    \ge f(p_1, p_2, \cdots, \frac{p_i + p_j}{2}, \cdots, \frac{p_i + p_j}{2}, \cdots, p_m).
\end{align*}
This fact implies that $(p_1, p_2, \cdots, p_m)$ such that $p_1 = p_2 = \cdots = p_m = S/m$ is a minimizer of the function $f$.

\end{proof}

\begin{lemma}
Let $\{X_i\}_{1 \le i \le m}$ be i.i.d. random variables, 
and $X_i$ follows a geometric distribution with success probability $p$. 
We have
\begin{align}
    \sP\left(\sum_{i=1}^m X_i > \frac{m}{4p} \right) \ge 1 - \frac{16}{9m}
\end{align}
\end{lemma}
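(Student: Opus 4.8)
The plan is to reduce the statement to a one–line application of Chebyshev's inequality, using only the mean and variance of a geometric random variable. First I would recall the elementary facts: consistent with the convention used earlier in the paper, $X \sim \geo(p)$ means $\sP(X = s) = (1-p)^{s-1}p$ for $s \ge 1$, so $\E[X] = 1/p$ and $\Var(X) = (1-p)/p^2 \le 1/p^2$. Setting $S = \sum_{i=1}^m X_i$ and using independence, this gives $\E[S] = m/p$ and $\Var(S) = m(1-p)/p^2 \le m/p^2$.

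Next I would observe that the event in question is simply a deviation of $S$ below its mean by three quarters of the mean: since $\E[S] - \frac{m}{4p} = \frac{3m}{4p}$, we have the inclusion $\left\{ S \le \frac{m}{4p} \right\} \subseteq \left\{ \left|S - \E[S]\right| \ge \frac{3m}{4p} \right\}$.

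Then Chebyshev's inequality, together with the variance bound $\Var(S) \le m/p^2$, yields
\begin{align*}
\sP\left(S \le \frac{m}{4p}\right)
&\le \sP\left(\left|S - \E[S]\right| \ge \frac{3m}{4p}\right)
\le \frac{\Var(S)}{\left(3m/(4p)\right)^2} \\
&\le \frac{m/p^2}{9m^2/(16p^2)} = \frac{16}{9m},
\end{align*}
and taking complements gives $\sP\left(\sum_{i=1}^m X_i > \frac{m}{4p}\right) \ge 1 - \frac{16}{9m}$, as desired.

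There is no genuine obstacle here; the only thing to get right is the bookkeeping of constants, namely choosing the deviation threshold $\frac{3m}{4p}$ and using $\Var(S) \le m/p^2$ so that the bound collapses to exactly $\frac{16}{9m}$. One could alternatively invoke the sharper one–sided Cantelli inequality $\sP(S \le \E[S] - t) \le \Var(S)/(\Var(S) + t^2)$, which produces the slightly better bound $\frac{16}{16 + 9m} \le \frac{16}{9m}$; since either route suffices, I would present the two–sided Chebyshev version for brevity.
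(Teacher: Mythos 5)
Your proof is correct and is essentially the same as the paper's: both compute $\E[S] = m/p$ and $\Var(S) = m(1-p)/p^2$, recognize the event as a deviation of $S$ at least $\frac{3}{4}\E[S]$ below the mean, and apply two-sided Chebyshev together with the bound $1-p \le 1$ to obtain $1 - \frac{16}{9m}$. The only difference is cosmetic (you invoke $\Var(S) \le m/p^2$ before Chebyshev; the paper applies $1-p \le 1$ at the end), so there is nothing substantive to compare.
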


\begin{proof}
Denote $\sum_{i=1}^m X_i$ by $\tau$. We know that 
\begin{align*}
    \E \tau = \frac{m}{p}, ~~\Var (\tau) = \frac{m(1-p)}{p^2}.
\end{align*}
Hence, we have
\begin{align*}
    \sP\left(\tau > \frac{1}{4} \E \tau\right) &= \sP\left(\tau - \E \tau > -\frac{3}{4} \E \tau\right) \\
     &= 1 - \sP\left(\tau - \E \tau \le -\frac{3}{4} \E \tau\right) \\
     &\ge 1 - \sP\left(|\tau - \E \tau| \ge \frac{3}{4} \E \tau\right) \\
     &\ge 1 - \frac{16\Var(\tau)}{9(\E \tau)^2} \\
     &= 1 - \frac{16m(1-p)}{9m^2} \ge 1 - \frac{16}{9m}.
\end{align*}
\end{proof}

\begin{corollary}
Let $\{X_i\}_{1 \le i \le m}$ be independent random variables, 
and $X_i$ follows a geometric distribution with success probability $p_i$. 
Then
\begin{align*}
    \pr{\sum_{i=1}^m X_i > \frac{m^2}{4(\sum_{i=1}^m p_i)}} \ge 1 - \frac{16}{9m}.
\end{align*}
\end{corollary}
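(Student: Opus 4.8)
The plan is to assemble the two results that immediately precede this corollary in the appendix: the corollary stating that replacing the success probabilities $p_1,\dots,p_m$ by their common average $\bar p=\frac1m\sum_{i=1}^m p_i$ can only decrease $\pr{\sum_{i=1}^m X_i \ge j}$ (a majorization/stochastic-dominance fact, ultimately a consequence of Lemma~\ref{lem:two-geo} and Corollary~\ref{coro:two-geo-plusz}), together with the i.i.d.\ lemma that, for $Y_i$ i.i.d.\ $\geo(\bar p)$, gives $\pr{\sum_{i=1}^m Y_i > \frac{m}{4\bar p}} \ge 1-\frac{16}{9m}$ via a Chebyshev estimate using $\E\sum Y_i = m/\bar p$ and $\Var(\sum Y_i) = m(1-\bar p)/\bar p^2$.

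First I would convert the strict inequality in the statement into a $\ge$-inequality at an integer threshold so that the i.i.d.\ transfer corollary applies. Setting $t = \frac{m^2}{4(\sum_{i=1}^m p_i)}$ and $j = \floor{t}+1$, which is a positive integer, we have $\pr{\sum_{i=1}^m X_i > t} = \pr{\sum_{i=1}^m X_i \ge j}$. Applying the averaging corollary with this $j$ and with $\{Y_i\}_{i=1}^m$ i.i.d.\ $\geo(\bar p)$ independent of the $X_i$ yields $\pr{\sum_{i=1}^m X_i \ge j} \ge \pr{\sum_{i=1}^m Y_i \ge j}$. Then I would translate back to a strict inequality: $\pr{\sum_{i=1}^m Y_i \ge j} = \pr{\sum_{i=1}^m Y_i > \floor{t}} \ge \pr{\sum_{i=1}^m Y_i > t}$, using $\floor{t}\le t$, and finally observe $t = \frac{m^2}{4(\sum_{i=1}^m p_i)} = \frac{m}{4\bar p}$, so the i.i.d.\ lemma bounds this last probability below by $1-\frac{16}{9m}$, which completes the argument.

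The only point requiring care — and it is bookkeeping rather than a genuine obstacle — is lining up the directions of the floor manipulations with the fact that the i.i.d.\ transfer corollary is phrased with ``$\ge$'' at integer arguments whereas the target is a strict ``$>$'' at a possibly non-integer threshold; once these are matched as above, no further work is needed, since the substantive content (the stochastic-dominance step and the variance computation) has already been carried out in the preceding lemmas.
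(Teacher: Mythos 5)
Your proposal is correct and follows exactly the route the paper intends: the final corollary in Appendix~\ref{sec:geo} is stated without an explicit proof precisely because it is the immediate concatenation of the preceding averaging corollary (replace each $p_i$ by the mean $\bar p$, at integer thresholds) with the i.i.d.\ Chebyshev lemma, and your floor bookkeeping correctly converts the strict, possibly non-integer threshold $t=m/(4\bar p)$ into the integer ``$\ge j$'' form that the averaging corollary requires and then back again.
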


\section{Proof of Proposition \ref{prop:strongly}}\label{appendix:strongly}
\begin{proof}[\textbf{Proof of Proposition \ref{prop:strongly}}]
$\quad$
\begin{enumerate}
    \item Just recall Proposition \ref{prop:base}.
    \item Denote $\xi = \sqrt{\frac{2\Delta n(\alpha+1)^2}{(L-\mu)(\alpha-1)}}$. 

Let $\nabla F_{\text{SC}}(\vx) = 0$, that is
\begin{align*}
    \left( \frac{L-\mu}{2n}\A \left( \sqrt{\frac{2}{\alpha+1}} \right) + \mu \I \right)\vx = \frac{L-\mu}{n(\alpha+1)}\xi \ve_m, 
\end{align*}
or
\begin{align}\label{proof:strongly:minimizer0}
    \begin{bmatrix}
        \omega^2 + 1 + \frac{2n\mu}{L-\mu} & -1 & & & \\
        -1 & 2 + \frac{2n\mu}{L-\mu} & -1 & & \\
        & \ddots & \ddots & & \\
        & & -1 & 2 + \frac{2n\mu}{L-\mu} & -1 \\
        & & & -1 & 1 + \frac{2n\mu}{L-\mu}
    \end{bmatrix}
    \vx = 
    \begin{bmatrix}
        0 \\
        0 \\
        \vdots \\
        0 \\
        \frac{2\xi}{\alpha + 1}
    \end{bmatrix}
\end{align}

Note that $q = \frac{\alpha - 1}{\alpha + 1}$ is a root of the equation
\begin{align*}
    z^2 - \left(2 + \frac{2n\mu}{L-\mu}\right) z + 1 = 0,
\end{align*}
and 
\[\omega^2 + 1 + \frac{2n\mu}{L-\mu} = \frac{1}{q}, \]
\[\frac{2}{\alpha + 1} = 1 - q = -q^2 + (1 + \frac{2n\mu}{L-\mu})q.\]

Hence, it is easily to check that the solution to Equation (\ref{proof:strongly:minimizer0}) is 
$$\vx^{*} = \xi (q^{m}, q^{m-1}, \cdots, q)^{\top},$$
and 
$$ F_{\text{SC}}(\vx^*) = -\frac{L-\mu}{2n(\alpha+1)}\xi^2 q = -\Delta. $$

    \item 

If $\vx \in \gF_k$, $1 \le k < m$, then $x_1 = x_2 = \cdots = x_{m-k} = 0$. 

Let $\vy = \vx_{m-k+1:m} \in \sR^k$ and $\A_k$ be last $k$ rows and columns of the matrix in Equation (\ref{proof:strongly:minimizer}).
Then we can rewrite $F(\vx)$ as
\begin{align*}
    F_k(\vy) \triangleq F_{\text{SC}}(\vx) = \frac{L-\mu}{4n} \vy^{\top} \A_k \vy - \frac{L-\mu}{n(\alpha+1)}\xi \inner{\ve_m}{\vy}.
\end{align*}
Let $\nabla F_k (\vy) = 0$, that is
\begin{align}\label{proof:strongly:minimizer}
    \begin{bmatrix}
        2 + \frac{2n\mu}{L-\mu} & -1 & & & \\
        -1 & 2 + \frac{2n\mu}{L-\mu} & -1 & & \\
        & \ddots & \ddots & & \\
        & & -1 & 2 + \frac{2n\mu}{L-\mu} & -1 \\
        & & & -1 & 1 + \frac{2n\mu}{L-\mu}
    \end{bmatrix}
    \vy = 
    \begin{bmatrix}
        0 \\
        0 \\
        \vdots \\
        0 \\
        \frac{2\xi}{\alpha + 1}
    \end{bmatrix}.
\end{align}

By some calculation, the solution to above equation is 
\begin{align*}
    \frac{\xi q^{k+1}}{1+q^{2k+1}} \left( q^{-1} - q, q^{-2} - q^2, \cdots, q^{-k} - q^{k} \right)^{\top}.
\end{align*}

Thus 
\[\min_{\vx \in \gF_k} F_{\text{SC}}(\vx) = \min_{\vy \in \sR^k} F_k(\vy) = -\frac{L-\mu}{2n(\alpha+1)} \xi^2 q \frac{1 - q^{2k}}{1 + q^{2k+1}} = \Delta \frac{1 - q^{2k}}{1 + q^{2k+1}},\]
and 
\begin{align*}
    \min_{\vx \in \gF_k} F_{\text{SC}}(\vx) - F_{\text{SC}}(\vx^{*}) &= \Delta \left( 1 - \frac{1-q^{2k}}{1+q^{2k+1}} \right) \\
    &= \Delta q^{2k}\frac{1+q}{1+q^{2k+1}} \\
    &\ge \Delta q^{2k}.
\end{align*}

\end{enumerate}
\end{proof}

\section{Proof of Proposition \ref{prop:convex}}\label{appendix:convex}
\begin{proof}[\textbf{Proof of Proposition \ref{prop:convex}}]
$\quad$
\begin{enumerate}
    \item Just recall Proposition \ref{prop:base}.
    \item Denote $\xi = \frac{\sqrt{3}}{2}\frac{BL}{(m+1)^{3/2}n}$. Let $\nabla F_{\text{C}}(\vx) = 0$, that is
\begin{align*}
    \frac{L}{2n}\A(1) \vx = \frac{\xi}{n} \ve_m, 
\end{align*}
or
\begin{align}\label{proof:convex:minimizer}
    \begin{bmatrix}
        2 & -1 & & & \\
        -1 & 2 & -1 & & \\
        & \ddots & \ddots & & \\
        & & -1 & 2 & -1 \\
        & & & -1 & 1
    \end{bmatrix}
    \vx = 
    \begin{bmatrix}
        0 \\
        0 \\
        \vdots \\
        0 \\
        \frac{2\xi}{L}
    \end{bmatrix}.
\end{align}

Hence, it is easily to check that the solution to Equation (\ref{proof:convex:minimizer}) is 
$$\vx^{*} = \frac{2\xi}{L} (1, 2, \cdots, m)^{\top},$$
and 
$$ F_{\text{C}}(\vx^*) = -\frac{m\xi^2}{n L}. $$

Moreover, we have
\begin{align*}
    \norm{\vx_0 - \vx^*}^2 &= \frac{4\xi^2}{L^2} \frac{m(m+1)(2m+1)}{6} \\
    &\le \frac{4\xi^2}{3L^2}(m+1)^3 = B^2.
\end{align*}

    \item By similar calculation to above proof, we have
$$\argmin_{\vx \in \gF_k} F_{\text{C}}(\vx) = \frac{2\xi}{L} (1, 2, \cdots, k)^{\top},$$
and 
\begin{align*} 
\min_{\vx \in \gF_k} F_{\text{C}}(\vx) = - \frac{k \xi^2}{nL}.
\end{align*}

Thus 
\begin{align*}
    \min_{\vx \in \gF_k} F_{\text{C}}(\vx) - F_{\text{C}}(\vx^*) = \frac{\xi^2}{n L} (m - k).
\end{align*}

\end{enumerate}
\end{proof}

\section{Non-convex Case}\label{sec:nonconvex}
In non-convex case, our goal is to find an $\eps$-approximate stationary point $\hat{\vx}$ of our objective function $f$, which satisfies 
\begin{align}
    \norm{\nabla f(\hat{\vx})} \le \eps.
\end{align}

\subsection{Preliminaries}
We first introduce a general concept about smoothness.
\begin{defn}\label{defn:general_smooth}
For any differentiable function $f:\sR^{m+1} \rightarrow \sR$,
we say $f$ is $(l, L)$-smooth, if for any $\vx, \vy \in \sR^m$ we have
\[\frac{l}{2}\norm{\vx - \vy}^2 \le f(\vx) - f(\vy) - \inner{\nabla f(\vy)}{\vx - \vy} \le \frac{L}{2}\norm{\vx - \vy}^2,\]
where $L > 0, l \in \sR$.
\end{defn}
Especially, if $f$ is $L$-smooth, then it can be checked that $f$ is $(-L, L)$-smooth.

If $f$ is $(-\sigma, L)$-smooth, in order to make the operator $\prox_{f}^{\gamma}$ valid, we set $\frac{1}{\gamma} > \sigma$ to ensure the function 
\begin{align*}
    \hat{f}(\vu) \triangleq f(\vu) + \frac{1}{2\gamma} \norm{\vx - \vu}^2
\end{align*}
is a convex function.

Next, we introduce a class of function which is original proposed in \citep{carmon2017lower}. Let $G_{\text{NC}}: \sR^{m+1} \rightarrow \sR$ be
\begin{align*}
    G_{\text{NC}}(\vx; \alpha, m) = \frac{1}{2} \norm{\B(m+1, \sqrt[4]{\alpha}) \vx}^2 - \sqrt{\alpha} \inner{\ve_1}{\vx} + \alpha \sum_{i=1}^{m} \Gamma (x_i),
\end{align*}
where the non-convex function $\Gamma: \sR \rightarrow \sR$ is
\begin{align}
    \Gamma(x) \triangleq 120 \int_1^{x} \frac{t^2(t-1)}{1 + t^2} d t.
\end{align}

We need following properties about $G_{\text{NC}}(\vx; \alpha, m)$.
\begin{prop}[Lemmas 3,4, \cite{carmon2017lower}]\label{prop:nonconvex:prop:base}
For any $0 < \alpha \le 1$, it holds that
\begin{enumerate}
    \item $\Gamma(x)$ is $(-45 (\sqrt{3} - 1), 180)$-smooth and $G_{\text{NC}}(\vx; \alpha, m)$ is $(-45 (\sqrt{3} - 1) \alpha , 4 + 180 \alpha)$-smooth.
    \item $G_{\text{NC}}(\vzero; \alpha, m) - \min_{\vx \in \sR^{m+1}} G_{\text{NC}}(\vx; \alpha, m) \le \sqrt{\alpha} / 2 + 10 \alpha m$.
    \item For $\vx$ which satisfies that $x_m = x_{m+1} = 0$, we have 
    \[\norm{\nabla G_{\text{NC}}(\vx; \alpha, m)} \ge \alpha^{3/4}/4.\]
\end{enumerate}
\end{prop}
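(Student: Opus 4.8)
The plan is to establish the three items essentially as in \cite{carmon2017lower}, of which this proposition is a transcription to the matrix $\B(m+1,\sqrt[4]{\alpha})$; items~1 and~2 are direct computations, and item~3 is the substantive one.

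For item~1 I would first compute $\Gamma'(x)=120\,\frac{x^{2}(x-1)}{1+x^{2}}$ and $\Gamma''(x)=120\,\frac{x^{4}+3x^{2}-2x}{(1+x^{2})^{2}}$. The upper bound $\Gamma''(x)\le 180$ is equivalent to $x^{4}+4x+3\ge 0$, which holds since $x^{4}+4x+3=(x+1)^{2}(x^{2}-2x+3)$ and $x^{2}-2x+3=(x-1)^{2}+2>0$. For the lower bound, the stationary points of the rational function $\Gamma''$ are the roots of $x^{3}-3x^{2}-3x+1=(x+1)(x^{2}-4x+1)$, i.e.\ $x\in\{-1,\,2\pm\sqrt{3}\}$, and evaluating gives $\min_{x}\Gamma''(x)=\Gamma''(2-\sqrt{3})=-45(\sqrt{3}-1)$; hence $\Gamma$ is $(-45(\sqrt{3}-1),180)$-smooth (for a $\sR\to\sR$ function, Definition~\ref{defn:general_smooth} is just a two-sided bound on $\Gamma''$). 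For $G_{\text{NC}}$ one notes $\nabla^{2}G_{\text{NC}}(\vx)=\A(m+1,\sqrt[4]{\alpha})+\alpha\,\diag\!\big(\Gamma''(x_{1}),\dots,\Gamma''(x_{m}),0\big)$; since $\A(m+1,\sqrt[4]{\alpha})=\B^{\top}\B\succeq 0$ and, by Gershgorin's theorem (its diagonal entries are $\sqrt{\alpha}+1\le 2$, then $2,\dots,2$, then $1$, with off-diagonal entries $-1$), $\Norm{\A(m+1,\sqrt[4]{\alpha})}\le 4$, combining with the bounds on $\Gamma''$ yields the $(-45(\sqrt{3}-1)\alpha,\,4+180\alpha)$-smoothness of $G_{\text{NC}}$.

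For item~2 I would complete the square. Using $\Norm{\B(m+1,\sqrt[4]{\alpha})\vx}^{2}=\sum_{i=2}^{m+1}(x_{i}-x_{i-1})^{2}+\sqrt{\alpha}\,x_{1}^{2}$,
\[
G_{\text{NC}}(\vx;\alpha,m)=\frac12\sum_{i=2}^{m+1}(x_{i}-x_{i-1})^{2}+\frac{\sqrt{\alpha}}{2}(x_{1}-1)^{2}-\frac{\sqrt{\alpha}}{2}+\alpha\sum_{i=1}^{m}\Gamma(x_{i}).
\]
The integrand $\frac{t^{2}(t-1)}{1+t^{2}}$ of $\Gamma$ is $\le 0$ for $t\le 1$ and $\ge 0$ for $t\ge 1$, so $\Gamma(x)=120\int_{1}^{x}(\cdot)\,dt\ge 0$ for every $x$; hence every term above except $-\sqrt{\alpha}/2$ is nonnegative and $\min_{\vx}G_{\text{NC}}\ge-\sqrt{\alpha}/2$. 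Also $G_{\text{NC}}(\vzero;\alpha,m)=\alpha m\,\Gamma(0)$, and the elementary evaluation $\Gamma(0)=120\big(\tfrac12+\tfrac12\ln 2-\tfrac{\pi}{4}\big)\le 10$ gives $G_{\text{NC}}(\vzero;\alpha,m)-\min_{\vx}G_{\text{NC}}(\vx;\alpha,m)\le\sqrt{\alpha}/2+10\alpha m$.

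Item~3 is the technical heart and the step I expect to be hardest. First, when $x_{m}=x_{m+1}=0$ the $m$-th gradient coordinate collapses: $\nabla_{m}G_{\text{NC}}(\vx)=-x_{m-1}+2x_{m}-x_{m+1}+\alpha\Gamma'(x_{m})=-x_{m-1}$ (using $\Gamma'(0)=0$), so if $\Norm{\nabla G_{\text{NC}}(\vx)}<\alpha^{3/4}/4$ then $|x_{m-1}|<\alpha^{3/4}/4\le 1/4$. The remaining work is the chain argument of \cite{carmon2017lower}: one shows that $\Norm{\nabla G_{\text{NC}}(\vx)}<\alpha^{3/4}/4$ would force, coordinate by coordinate along the tridiagonal chain, the whole profile $(x_{1},\dots,x_{m+1})$ to be pinned near the ``ideal'' profile determined by the first coordinate --- where the term $-\sqrt{\alpha}\inner{\ve_{1}}{\vx}$ contributes an unremovable $-\sqrt{\alpha}$ to $\nabla_{1}G_{\text{NC}}$ once the $x_{i}$ are small --- and this ideal profile is incompatible with $x_{m}=x_{m+1}=0$; since $\alpha\le 1$ gives $\sqrt{\alpha}\ge\alpha^{3/4}$, that branch already yields $\Norm{\nabla G_{\text{NC}}(\vx)}\ge\sqrt{\alpha}\ge\alpha^{3/4}$. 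The delicate point --- and where I would follow the constants in \cite{carmon2017lower} rather than re-derive them --- is making this propagation quantitative: bounding $\alpha\Gamma'(x_{i})$ in the regimes $x_{i}\approx 0$ and $x_{i}\approx 1$ and controlling how the per-coordinate errors accumulate over the length-$\Theta(m)$ chain so that the final guarantee is exactly $\alpha^{3/4}/4$.
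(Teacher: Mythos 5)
The paper does not prove Proposition~\ref{prop:nonconvex:prop:base}: it imports the statement wholesale, attributing it to Lemmas~3--4 of \cite{carmon2017lower}, so there is no in-paper proof to compare your attempt against. Your derivations of items~1 and~2 are correct and self-contained and in fact go beyond what the paper offers. The computation $\Gamma''(x)=120\,\frac{x^{4}+3x^{2}-2x}{(1+x^{2})^{2}}$, the factorization $x^{4}+4x+3=(x+1)^{2}\big((x-1)^{2}+2\big)$ giving $\Gamma''\le 180$, the critical-point value $\Gamma''(2-\sqrt{3})=-45(\sqrt{3}-1)$, the Gershgorin estimate $\Norm{\A(m+1,\sqrt[4]{\alpha})}\le 4$ combined with $\A\succeq 0$, the completed-square form using $\Norm{\B(m+1,\sqrt[4]{\alpha})\vx}^{2}=\sum_{i=2}^{m+1}(x_i-x_{i-1})^2+\sqrt{\alpha}\,x_1^2$, the sign analysis showing $\Gamma\ge 0$, and the evaluation $\Gamma(0)=60+60\ln 2-30\pi<10$ all check.

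For item~3, your first observation $\nabla_{m}G_{\text{NC}}(\vx)=-x_{m-1}$ when $x_m=x_{m+1}=0$ (using $\Gamma'(0)=0$) is a correct starting point, but the rest is only a gesture at the chain argument, and you say explicitly that you would defer the quantitative propagation to \cite{carmon2017lower}. That mirrors the paper's own treatment, so it is a defensible choice; just be aware that as written the sketch of how the profile gets ``pinned'' and how per-coordinate slack accumulates is vague enough that it does not constitute a proof of the $\alpha^{3/4}/4$ bound, and if the proposal is meant to stand alone that part must be filled in from the cited source.
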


\subsection{Our Result}
\begin{thm}\label{thm:nonconvex}
For any PIFO algorithm $\gA$ and any $L, \sigma, n, \Delta, \eps$ such that
$\eps^2 \le \frac{\Delta L \alpha}{81648 n}$,
there exist a dimension $d = \floor{\frac{\Delta L \sqrt{\alpha}}{40824 n \eps^2}} + 1$ and $n$ $(-\sigma, L)$-smooth nonconvex functions $\{f_i:\sR^d\rightarrow\sR\}_{i=1}^n$  such that $f(\vx_0) - f(\vx^*) \le \Delta$. In order to find $\hat{\vx} \in \sR^d$ such that $\E \norm{\nabla f(\hat{\vx})} < \eps$, $\gA$ needs at least $\Omega \left(\frac{\Delta L \sqrt{\alpha}}{\eps^2}\right)$ queries to $h_f$, where we set $\alpha = \min \left\{1, \frac{(\sqrt{3} + 1)n\sigma}{30 L}, \frac{n}{180} \right\}$.
\end{thm}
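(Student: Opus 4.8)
The plan is to mimic the structure of the convex and strongly convex constructions of Section \ref{sec:pre}, but replacing the quadratic building block $r_i$ with a version built from the Carmon--Duchi--Hinder--Sidford function $G_{\text{NC}}$. First I would define, for $1 \le i \le n$, a component function $f_i$ that distributes the rows of $\B(m+1, \sqrt[4]{\alpha})$ among the $n$ groups $\gL_i$ exactly as in \eqref{defn-f}, attaches the linear term $-\sqrt{\alpha}\inner{\ve_1}{\cdot}$ to the single component whose group contains the first row, and includes a $\frac1n$-scaled copy of the nonconvex potential $\alpha \sum_{j} \Gamma(x_j)$ in each $f_i$ (so that the average $f = \frac1n\sum_i f_i$ equals a rescaling of $G_{\text{NC}}(\cdot;\alpha,m)$). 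The scaling by $\Delta L/(c\,n)$ for an appropriate constant $c$ is chosen so that each $f_i$ becomes $(-\sigma,L)$-smooth and $f(\vx_0)-f^* \le \Delta$; this is where the constraint $\alpha = \min\{1, (\sqrt3+1)n\sigma/(30L), n/180\}$ and $\eps^2 \le \Delta L\alpha/(81648 n)$ enter, via Proposition \ref{prop:nonconvex:prop:base}(1)--(2) together with the factor-of-$2$ between $\A_i^{\top}\A_i$ being sparse and the block decomposition argument of Proposition \ref{prop:base}.

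Next I would re-run the ``zero-chain'' mechanism. The key structural fact — the analogue of Lemma \ref{lem:jump} — is that for $\vx$ supported on the first $k$ coordinates, a single query $h_f(\vx,i,\gamma)$ can enlarge the support to coordinate $k{+}1$ only when $k \equiv i - 1 \pmod n$; this holds for the gradient because $\nabla\Gamma(x_j)$ depends only on $x_j$ (so the potential term does not couple coordinates), and for the proximal operator because, as in the proof of Lemma \ref{lem:jump}, inverting $\I + c_2 \B_i^{\top}\B_i$ (now plus the diagonal Hessian contribution of $\Gamma$, which is still diagonal) preserves the relevant subspace. Here the ordering of coordinates is reversed relative to the convex case (the chain propagates from $\ve_1$ toward $\ve_{m+1}$), so the subspaces $\gF_k$ should be indexed accordingly; modulo that bookkeeping, Corollary \ref{coro:stopping-time} and Lemma \ref{lem:base} apply verbatim, with $T_k$ again a sum of $k$ independent geometrics and $\sum_l q_l \le (M{+}1)/n$.

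Then I would invoke Proposition \ref{prop:nonconvex:prop:base}(3): any $\vx$ with $x_m = x_{m+1} = 0$ has $\norm{\nabla G_{\text{NC}}(\vx;\alpha,m)} \ge \alpha^{3/4}/4$, so after rescaling, any $\vx \in \gF_M$ with $M$ one or two short of $m$ has gradient norm at least a constant times $\sqrt{\Delta L \alpha}/\big(\sqrt{n}\,\sqrt{m+1}\big)^{?}$ — more precisely I would pick $m$ so that the rescaled lower bound on $\norm{\nabla f}$ over $\gF_M$ is $\ge 3\eps$ (the ``$9\eps$'' of Lemma \ref{lem:base} becomes ``$(3\eps)^2$'' since we measure gradient norm, not suboptimality). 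Combining with Lemma \ref{lem:base}-style averaging over the stopping time gives $\min_{t \le N}\E\norm{\nabla f(\vx_t)} \ge \eps$ with $N = n(M+1)/4 = \Omega(\Delta L\sqrt\alpha/\eps^2)$ after substituting the chosen $m = \Theta(\Delta L\sqrt\alpha/(n\eps^2))$; the dimension is $d = m+1$, matching the claimed $\floor{\Delta L\sqrt\alpha/(40824 n\eps^2)}+1$.

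The main obstacle I anticipate is the smoothness accounting: one must verify that spreading the rows of $\B(m+1,\sqrt[4]\alpha)$ across $n$ components and replicating the $\Gamma$-potential in every $f_i$ still yields $(-\sigma,L)$-smooth individual components after the $\Delta L/(cn)$ rescaling, and in particular that the lower curvature bound $-\sigma$ is not violated — this forces the precise form of $\alpha = \min\{1,(\sqrt3+1)n\sigma/(30L),n/180\}$ and is the step where Proposition \ref{prop:nonconvex:prop:base}(1) must be combined carefully with the per-group operator-norm bound $\Norm{\B_i^{\top}\B_i}\le 2$ from the proof of Proposition \ref{prop:base}. The rest — the zero-chain lemma, the geometric stopping-time argument, and the final parameter substitution — is routine given the machinery already developed in Sections \ref{sec:pre}--\ref{sec:mainresults}.
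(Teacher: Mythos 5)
Your high-level plan matches the paper's actual construction: take $G_{\text{NC}}$, spread the rows of $\B(m+1,\sqrt[4]{\alpha})$ across the $n$ components exactly as in \eqref{defn-f}, put the linear term in one component, include the potential $\Gamma$ in every component, rescale by $\lambda$ and $\beta$ so that the individual components are $(-\sigma,L)$-smooth, and then reuse the stopping-time machinery of Corollary~\ref{coro:stopping-time} and Lemma~\ref{lem:base} together with Proposition~\ref{prop:nonconvex:prop:base}(3). The dimension and threshold bookkeeping you sketch also line up with the paper.

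The gap is in your treatment of the proximal oracle, which is the one genuinely new obstacle relative to the convex case. You describe the prox step as ``inverting $\I + c_2\B_i^{\top}\B_i$ (now plus the diagonal Hessian contribution of $\Gamma$, which is still diagonal).'' That is not what happens. For a nonconvex $f_i$, the first-order condition for $\vu = \prox_{f_i}^{\gamma}(\vx)$ is a \emph{nonlinear} system
\[
\Bigl(\tfrac{\lambda n}{\beta^2}\B_i^{\top}\B_i + \tfrac1\gamma \I\Bigr)\vu + \tfrac{\lambda\alpha}{\beta}\,\nabla G(\vu/\beta) = \eta_i\ve_1 + \tfrac1\gamma\vx,
\]
and because $\nabla\Gamma$ is a cubic rational function, you cannot solve this by matrix inversion. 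What saves the argument is that $\B_i\B_i^{\top}$ is diagonal and $\nabla G$ is coordinate-wise, so the system decouples into scalar and $2\times2$ blocks; one must then prove that each block equation in the coordinates that should stay zero admits \emph{only} the zero solution, which is where Lemmas~\ref{lem:solution:z} and~\ref{lem:solution:z1z2} enter, under the hypothesis $\gamma < \tfrac{\sqrt2+1}{60}\tfrac{\beta^2}{\lambda\alpha}$. You also omit the companion point that the prox of a nonconvex $f_i$ is not even defined for arbitrary $\gamma$: one needs $1/\gamma$ to dominate the negative curvature, and the paper observes that this well-posedness requirement already forces $\gamma$ into the range for which the zero-solution lemmas apply, so no extra case analysis is needed. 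Without these two pieces, the zero-chain lemma for PIFO queries is not established. A minor secondary slip: the threshold in Lemma~\ref{lem:nonconvex:base} remains $9\eps$ on the gradient norm (coming from $\pr{T_{M+1}>N}\ge 1/9$), not $(3\eps)^2$; there is no squaring because the argument bounds $\E\norm{\nabla F(\vx_t)}$ directly rather than passing through suboptimality.
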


\begin{remark}
For $n > 180$, wehave
\begin{align*}
    \Omega \left(\frac{\Delta L \sqrt{\alpha}}{\eps^2}\right) = \Omega \left( \frac{\Delta}{\eps^2} \min \left\{ L, \sqrt{\frac{\sqrt{3} + 1}{30}} \sqrt{n \sigma L}, \frac{\sqrt{n}L}{\sqrt{180}} \right\} \right) = \Omega \left( \frac{\Delta}{\eps^2} \min\{L, \sqrt{n\sigma L}\} \right).
\end{align*}
Thus, our result is comparable to the one of \citeauthor{zhou2019lower}'s result (their result only related to IFO algorithms, so our result is more strong), but our construction only requires the dimension be $\gO \left(1 + \frac{\Delta}{\eps^2} \min \{L/n, \sqrt{\sigma L /n}\} \right)$, which is much smaller than 
$\gO \left( \frac{\Delta}{\eps^2} \min \{L, \sqrt{n \sigma L}\} \right) $ in \citep{zhou2019lower}.
\end{remark}

\subsection{Constructions}
Consider
\begin{align}
    F(\vx; \alpha, m, \lambda, \beta) = \lambda G_{\text{NC}} (\vx / \beta; \alpha, m).    
\end{align}

Similar to our construction we introduced in Section \ref{sec:pre}, we denote the $l$-th row of the matrix $\B(m+1, \sqrt[4]{\alpha})$ by $\vb_l$ and 
\begin{align}
    \gL_i = \{ l: 1 \le l \le m, m + 1 - l \equiv i(\bmod ~n) \}, i = 1,2, \cdots, n.
\end{align}

Let $\gG_k = \spn \{\ve_1, \ve_2, \cdots, \ve_k\}$, $1 \le k \le m$, $\gG_0 = \{\vzero\}$ and compose $F(\vx; \alpha, m, \lambda, \beta)$ to
\begin{align}
\begin{cases}
\!f_1(\vx; \alpha, m, \lambda, \beta) = \frac{\lambda n}{2\beta^2} \sum\limits_{l \in \gL_i} \norm{\vb_{l}^{\top}\vx}^2 
- \frac{\lambda n \sqrt{\alpha}}{\beta} \inner{\ve_1}{\vx} + \lambda \alpha \sum\limits_{i=1}^{m} \Gamma (x_i / \beta), \\
\!f_i(\vx; \alpha, m, \lambda, \beta) = \frac{\lambda n}{2\beta^2} \sum\limits_{l \in \gL_i} \norm{\vb_{l}^{\top} \vx}^2 + \lambda \alpha \sum\limits_{i=1}^{m} \Gamma (x_i/\beta), \text{ for } i \ge 2.
\end{cases}
\end{align}

Clearly, $F(\vx; \alpha, m, \lambda, \beta) = \frac{1}{n} \sum_{i=1}^n f_i(\vx; \alpha, m, \lambda, \beta)$. 
Moreover, by Proposition \ref{prop:nonconvex:prop:base}, we have following properties about $F(\vx; \alpha, m, \lambda, \beta)$ and $\{f_i(\vx; \alpha, m, \lambda, \beta)\}_{i=1}^n$.
\begin{prop}
For any $0 < \alpha \le 1$, it holds that
\begin{enumerate}
    \item $f_i(\vx; \alpha, m, \lambda, \beta)$ is $\left(\frac{-45 (\sqrt{3} - 1) \alpha\lambda}{\beta^2} , \frac{(2n + 180 \alpha)\lambda}{\beta^2}\right)$-smooth.
    \item $F(\vzero; \alpha, m, \lambda, \beta) - \min_{\vx \in \sR^{m+1}} F(\vx; \alpha, m, \lambda, \beta) \le \lambda(\sqrt{\alpha} / 2 + 10 \alpha m)$.
    \item For $\vx$ which satisfies that $x_m = x_{m+1} = 0$, we have 
    \[\norm{\nabla F(\vx; \alpha, m, \lambda, \beta)} \ge \frac{\alpha^{3/4} \lambda}{4 \beta}.\]
\end{enumerate}
\end{prop}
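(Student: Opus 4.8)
The plan is to reduce the three claims to the properties of $G_{\text{NC}}$ recorded in Proposition~\ref{prop:nonconvex:prop:base} (Lemmas~3 and~4 of \citet{carmon2017lower}) together with the scaling identity $F(\vx;\alpha,m,\lambda,\beta) = \lambda\, G_{\text{NC}}(\vx/\beta;\alpha,m)$, and to handle the decomposition $F = \frac1n\sum_{i=1}^n f_i$ separately for part~1, exploiting that only the quadratic term and the separable $\Gamma$-term --- not the linear term --- contribute to the Hessian of each $f_i$.

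For part~2 I would note that $\vx\mapsto\vx/\beta$ is a bijection of $\sR^{m+1}$, so $\min_{\vx}F(\vx;\alpha,m,\lambda,\beta) = \lambda\min_{\vu}G_{\text{NC}}(\vu;\alpha,m)$ while $F(\vzero;\alpha,m,\lambda,\beta) = \lambda\, G_{\text{NC}}(\vzero;\alpha,m)$; subtracting and invoking part~2 of Proposition~\ref{prop:nonconvex:prop:base} gives $F(\vzero;\alpha,m,\lambda,\beta) - \min_{\vx}F(\vx;\alpha,m,\lambda,\beta)\le\lambda(\sqrt{\alpha}/2 + 10\alpha m)$. For part~3, the chain rule gives $\nabla F(\vx;\alpha,m,\lambda,\beta) = \frac{\lambda}{\beta}\nabla G_{\text{NC}}(\vx/\beta;\alpha,m)$, and $x_m = x_{m+1} = 0$ forces the $m$-th and $(m{+}1)$-th entries of $\vx/\beta$ to vanish, so part~3 of Proposition~\ref{prop:nonconvex:prop:base} yields $\norm{\nabla F(\vx;\alpha,m,\lambda,\beta)} = \frac{\lambda}{\beta}\norm{\nabla G_{\text{NC}}(\vx/\beta;\alpha,m)} \ge \frac{\alpha^{3/4}\lambda}{4\beta}$.

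For part~1 I would compute $\nabla^2 f_i(\vx;\alpha,m,\lambda,\beta)$ directly. The quadratic term $\frac{\lambda n}{2\beta^2}\sum_{l\in\gL_i}\norm{\vb_l^\top\vx}^2$ has constant Hessian $\frac{\lambda n}{\beta^2}\B_i^\top\B_i$, where $\B_i$ collects the rows of $\B(m{+}1,\sqrt[4]{\alpha})$ indexed by $\gL_i$; exactly as in the proof of Proposition~\ref{prop:base}, any two rows in $\gL_i$ have index gap at least $n\ge 2$ and hence disjoint two-element supports, so $\B_i^\top\B_i$ is block diagonal, is positive semidefinite, and has spectral norm at most $2$, whence this term contributes a Hessian $H$ with $\vzero\preceq H\preceq\frac{2n\lambda}{\beta^2}\I$. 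The separable penalty $\lambda\alpha\sum_{j=1}^m\Gamma(x_j/\beta)$ has diagonal Hessian $\frac{\lambda\alpha}{\beta^2}\diag\!\big(\Gamma''(x_1/\beta),\dots,\Gamma''(x_m/\beta),0\big)$, and since $\Gamma$ is $(-45(\sqrt3-1),180)$-smooth by part~1 of Proposition~\ref{prop:nonconvex:prop:base} we have $-45(\sqrt3-1)\le\Gamma''(t)\le 180$ for every $t$, so this term contributes a Hessian lying between $-\frac{45(\sqrt3-1)\alpha\lambda}{\beta^2}\I$ and $\frac{180\alpha\lambda}{\beta^2}\I$ in the Loewner order. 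The linear term, present only in $f_1$, adds nothing. Summing, and recalling (Definition~\ref{defn:general_smooth} plus the integral form of Taylor's remainder) that a $C^2$ function is $(l,L)$-smooth precisely when $l\I\preceq\nabla^2 f\preceq L\I$ everywhere, we obtain $-\frac{45(\sqrt3-1)\alpha\lambda}{\beta^2}\I\preceq\nabla^2 f_i(\vx;\alpha,m,\lambda,\beta)\preceq\frac{(2n+180\alpha)\lambda}{\beta^2}\I$, which is part~1.

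I expect the only delicate point to be in part~1: one must keep careful track of the corner row $\vb_{m+1}=\sqrt[4]{\alpha}\,\ve_1$ of $\B(m{+}1,\sqrt[4]{\alpha})$ so that the spectral bound $\norm{\B_i^\top\B_i}\le 2$ is genuinely preserved (for $\alpha\le 1$ the extra $\ve_1\ve_1^\top$-type contribution is absorbed just as in the quadratic construction), and one must use the \emph{two-sided} nature of $(l,L)$-smoothness --- the lower Hessian bound is supplied entirely by the non-convex curvature of $\Gamma$, with the quadratic block contributing only to the upper bound. Everything else is a routine transfer through the rescaling $\vx\mapsto\vx/\beta$ and the linearity $F=\frac1n\sum_i f_i$.
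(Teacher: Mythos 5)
Your proposal is correct and takes essentially the same route the paper intends: the paper itself gives no explicit proof, simply stating the result follows ``by Proposition~\ref{prop:nonconvex:prop:base}'', and your argument supplies exactly the routine verification that sentence elides. Parts~2 and~3 are the direct scaling transfers $F(\vx)=\lambda G_{\text{NC}}(\vx/\beta)$ and $\nabla F(\vx)=\tfrac{\lambda}{\beta}\nabla G_{\text{NC}}(\vx/\beta)$, and your Hessian computation for part~1 --- block-diagonal $\B_i^\top\B_i$ with spectral norm at most $2$ from the index-gap $\ge n\ge 2$ in $\gL_i$, plus the diagonal $\Gamma''$ contribution sandwiched by the $(-45(\sqrt3-1),180)$-smoothness of $\Gamma$, plus the vanishing contribution of the linear term --- is precisely the decomposition underlying Proposition~\ref{prop:base} adapted to the non-convex setting, so your answer agrees with the paper.
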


Similar to Lemma \ref{lem:jump}, similar conclusion hosts for $\{f_i(\vx; \alpha, m, \lambda, \beta)\}_{i=1}^n$.
\begin{lemma}\label{lem:nonconvex:jump}
For $\vx \in \gF_k$, $0 \le k < m$ and $\gamma < \frac{\sqrt{2} + 1}{60} \frac{\beta^2}{\lambda \alpha}$, 
we have
\begin{align*}
    \nabla f_{i}(\vx; \alpha, m, \lambda, \beta),  \prox_{f_{i}}^{\gamma} (\vx)\in 
    \begin{cases}
    \gG_{k+1}, \text{ if } k \equiv i - 1 (\bmod ~n), \\
    \gG_{k}, \text{ otherwise}.
    \end{cases}
\end{align*}
\end{lemma}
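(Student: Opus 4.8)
The plan is to mirror the proof of Lemma \ref{lem:jump}, exploiting the fact that in the non-convex construction the index sets $\gL_i$ are defined so that the matrix $\B(m+1,\sqrt[4]\alpha)$ now ``grows'' the subspaces $\gG_k$ starting from $\ve_1$ (rather than $\ve_m$). First I would record the analogue of the three structural observations used before: for $\vx \in \gG_k$ with $k \ge 1$ we have $\vb_l^\top \vx = 0$ for all rows $l$ with index above $k$ in the relevant ordering, $\vb_l \in \gG_k$ for the rows strictly ``below'' $k$, and the one critical row $\vb_k \in \gG_{k+1}$; consequently $\vb_l\vb_l^\top\vx \in \gG_k$ for $l\ne k$ and $\vb_k\vb_k^\top\vx \in \gG_{k+1}$. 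Since each $f_i(\vx;\alpha,m,\lambda,\beta)$ is a sum of a quadratic term $\frac{\lambda n}{2\beta^2}\sum_{l\in\gL_i}\norm{\vb_l^\top\vx}^2$, a linear term supported on $\ve_1$ (only for $i=1$), and the separable term $\lambda\alpha\sum_j\Gamma(x_j/\beta)$, the gradient part is immediate: $\nabla\big(\sum_{l\in\gL_i}\norm{\vb_l^\top\vx}^2\big) = 2\sum_{l\in\gL_i}\vb_l\vb_l^\top\vx$ lands in $\gG_{k+1}$ exactly when the critical row $k$ belongs to $\gL_i$, i.e. when $k \equiv i-1 \pmod n$, and in $\gG_k$ otherwise; the separable term $\nabla\Gamma(x_j/\beta)$ contributes only coordinates $j$ for which $x_j\ne 0$, hence stays inside $\gG_k$; and the linear term contributes $\ve_1 \in \gG_1 \subseteq \gG_k$ (for $k\ge 1$, and $\gG_1$ for $k=0$). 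The base case $k=0$ is handled exactly as before: $\vx=\vzero$, $\nabla f_1 = -\tfrac{\lambda n\sqrt\alpha}{\beta}\ve_1 \in \gG_1$ (using $\Gamma'(0)$ is a constant times $\ve_j$-free), and $\nabla f_i = \vzero$ for $i\ge 2$ since the quadratic and $\Gamma$-terms have vanishing gradient at the origin.

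The proximal part is where the condition $\gamma < \frac{\sqrt2+1}{60}\frac{\beta^2}{\lambda\alpha}$ enters, and this is the main obstacle. The point $\vu = \prox_{f_i}^\gamma(\vx)$ is the minimizer of $f_i(\vu) + \frac{1}{2\gamma}\norm{\vx-\vu}^2$; because $f_i$ is only $(-\sigma_i,\cdot)$-smooth with $\sigma_i = \frac{45(\sqrt3-1)\alpha\lambda}{\beta^2}$ (from the first item of the preceding Proposition, noting the per-component weakly-convex constant coming from $\Gamma$), the smallness condition on $\gamma$ guarantees $\frac{1}{\gamma} > \sigma_i$ so that the proximal objective is strongly convex and $\vu$ is well-defined and unique. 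To locate $\vu$ in the right subspace I would use the first-order optimality condition $\nabla f_i(\vu) + \frac{1}{\gamma}(\vu - \vx) = \vzero$, i.e.
\begin{align*}
\frac{\lambda n}{\beta^2}\sum_{l\in\gL_i}\vb_l\vb_l^\top\vu + \frac{\lambda\alpha}{\beta}\,\Gamma'(\vu/\beta) + \frac{1}{\gamma}\vu = \frac{1}{\gamma}\vx + \big(\text{linear term if }i=1\big),
\end{align*}
where $\Gamma'(\vu/\beta)$ denotes the vector with entries $\Gamma'(u_j/\beta)$. The right-hand side lies in $\gG_{k+1}$ when $k\equiv i-1\pmod n$ and in $\gG_k$ otherwise (same analysis as for the gradient, since $\vx\in\gG_k$ and $\ve_1\in\gG_1$). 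I then argue by a fixed-point / invariance argument: the map $T(\vw) = \gamma\big(\tfrac{1}{\gamma}\vx + (\text{linear}) - \tfrac{\lambda n}{\beta^2}\sum_{l\in\gL_i}\vb_l\vb_l^\top\vw - \tfrac{\lambda\alpha}{\beta}\Gamma'(\vw/\beta)\big)$ is a contraction on the relevant closed ball (again by the $\gamma$ bound, which controls the non-convex curvature of $\Gamma$), and it maps the target subspace $\gG_{k+1}$ (resp. $\gG_k$) into itself because every term on the right preserves that subspace — the quadratic part by the structural observations, $\Gamma'$ by separability, $\vx$ and $\ve_1$ by membership. Hence its unique fixed point $\vu$ lies in that subspace.

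A cleaner alternative for the proximal step, which I would try first, is to observe that the proximal objective $\hat f_i(\vu) = f_i(\vu) + \frac{1}{2\gamma}\norm{\vx-\vu}^2$ is itself strongly convex and, restricted to the subspace $S$ ($= \gG_{k+1}$ or $\gG_k$ as appropriate), has a unique minimizer $\vu_S$; one shows $\nabla \hat f_i(\vu_S)$ is orthogonal to $S$ by the structural facts above, hence $\vu_S$ is a global critical point of $\hat f_i$ and therefore equals $\prox_{f_i}^\gamma(\vx)$. This avoids the explicit contraction estimate but still needs $\frac1\gamma > \sigma_i$ to know the global minimizer is unique and coincides with the subspace minimizer. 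Either way, the only quantitative ingredient is translating the hypothesis $\gamma < \frac{\sqrt2+1}{60}\frac{\beta^2}{\lambda\alpha}$ into $\frac1\gamma > \frac{45(\sqrt3-1)\alpha\lambda}{\beta^2}$ (using $\frac{60}{\sqrt2+1} = 60(\sqrt2-1) \approx 24.85 > 45(\sqrt3-1)\approx 32.9$ — I would double-check this numeric inequality, as it is the one place the stated constant must actually be verified), so that the weak-convexity of every $f_i$ is dominated and all proximal objectives are genuinely convex. I expect this numerical verification plus the careful bookkeeping of which row indices fall in $\gL_i$ under the $m+1-l \equiv i \pmod n$ convention to be the fiddly parts; the subspace-invariance argument itself is a direct transcription of Lemma \ref{lem:jump}.
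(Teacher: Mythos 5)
The gradient half of your argument matches the paper's proof: one observes $\Gamma'(0)=0$ so the separable $\Gamma$-term preserves $\gG_k$, the critical row of $\B(m+1,\sqrt[4]{\alpha})$ is $l = m-k$, and $m-k\in\gL_i$ precisely when $k\equiv i-1\pmod n$.

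For the proximal half, however, the numeric check you flag actually fails, and this kills both of your proposed routes. The lemma's hypothesis $\gamma < \frac{\sqrt{2}+1}{60}\frac{\beta^2}{\lambda\alpha}$ is equivalent to $\frac{1}{\gamma} > 60(\sqrt{2}-1)\frac{\lambda\alpha}{\beta^2} \approx 24.85\,\frac{\lambda\alpha}{\beta^2}$, while the weak-convexity constant of $f_i$ from $\Gamma$ is $45(\sqrt{3}-1)\frac{\lambda\alpha}{\beta^2} \approx 32.94\,\frac{\lambda\alpha}{\beta^2}$. Since $60(\sqrt{2}-1) < 45(\sqrt{3}-1)$, the lemma's bound on $\gamma$ does \emph{not} guarantee that $\hat f_i(\vu) = f_i(\vu) + \frac{1}{2\gamma}\norm{\vx-\vu}^2$ is (strongly) convex. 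Your subspace-restriction argument needs convexity to conclude that the restricted minimizer, having gradient both orthogonal to $S$ and inside $S$, is \emph{the} proximal point; your contraction argument needs an even stronger smallness of $\gamma$. Neither closes under the stated hypothesis. (The Remark right after the lemma does note that all valid PIFO calls use the stricter bound $\gamma < \frac{\sqrt{3}+1}{90}\frac{\beta^2}{\lambda\alpha}$, so your approach would suffice for the paper's downstream use — but it does not prove the lemma as stated.)

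The paper instead works directly with the first-order system $\nabla f_j(\vv) + \frac{1}{\gamma}(\vv - \vx) = \vzero$ and never invokes convexity. Because $\B_j\B_j^{\top}$ is diagonal (rows of $\B$ within $\gL_j$ are $\ge n$ apart) and $\Gamma$ is separable, the equations for coordinates $s$ past the critical threshold decouple into a single cubic $\lambda_1 z + \lambda_2\frac{z^2(z-1)}{1+z^2}=0$ or a coupled pair of cubics. Lemmas~\ref{lem:solution:z} and~\ref{lem:solution:z1z2} show that under $0 < \lambda_2 < (2+2\sqrt{2})\lambda_1$ — which, with $\lambda_1 = \beta/\gamma$ and $\lambda_2 = 120\lambda\alpha/\beta$, is exactly the stated $\gamma$ bound — these systems have $z=0$ as the only real solution. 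So any stationary point of $\hat f_j$ has the required coordinates zero, and the conclusion holds even when $\hat f_j$ is nonconvex. That is a genuinely different (and more robust) route than what you propose: if you want to follow your cleaner convexity route, you must strengthen the lemma's hypothesis to $\gamma < \frac{\sqrt{3}+1}{90}\frac{\beta^2}{\lambda\alpha}$; to prove the lemma as written you need the coordinate-wise cubic analysis.
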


\begin{proof}
Let $G(\vx) \triangleq \sum\limits_{i=1}^{m} \Gamma (x_i)$ and $\Gamma'(x)$ be the derivative of $\Gamma(x)$.

First note that $\Gamma'(0) = 0$, so if $\vx \in \gG_k$, then 
\[\nabla G(\vx) = \big( \Gamma'(x_1), \Gamma'(x_2), \cdots, \Gamma'(x_m) \big)^{\top} \in \gG_k. \]

Moreover, for $\vx \in \gF_G ~(k \ge 1)$, we have 
\begin{align*}
    \vb_{l}^{\top} \vx &= 0 \text{ for } l < m - k, \\
    \vb_{l} &\in \gG_k \text{ for } l > m - k, \\ 
    \vb_{m - k} &\in \gG_{k+1}.
\end{align*}
Consequently, for $l \neq m - k$, $\vb_l \vb_l^{\top} \vx = (\vb_l^{\top} \vx) \vb_l \in \gG_k $,
and $\vb_{m - k} \vb_{m - k}^{\top} \vx \in \gG_{k+1} $.

For $k = 0$, we have $\vx = \vzero$, and
\begin{align*}
\nabla f_1(\vx) = \lambda n \sqrt{\alpha}/\beta ~\ve_1 \in \gG_1, \\
\nabla f_j(\vx) = \vzero ~(j \ge 2).
\end{align*}

For $k \ge 1$, we suppose that $m - k \in \gL_{i}$. Since
\begin{align*}
    \nabla f_j(\vx) = \frac{\lambda n}{\beta^2} \sum_{l \in \gL_j} \vb_l^{\top} \vb_l \vx + \frac{\lambda \alpha}{\beta} ~\nabla G(\vx/\beta) - \eta_j \ve_1,
\end{align*}
where $\eta_1 = \lambda n \sqrt{\alpha}/\beta$, $\eta_j = 0$ for $j \ge 2$. \\
Hence, $\nabla f_{i} (\vx) \in \gF_{k+1}$ and $\nabla f_j (\vx) \in \gF_{k} ~(j \neq i)$.

\vskip 5pt
Now, we turn to consider $\vv = \prox_{f_j}^{\gamma} (\vx)$. \\

We have
\begin{align*}
    \nabla f_j(\vv) + \frac{1}{\gamma} (\vv - \vx) = \vzero,
\end{align*}
that is
\begin{align}
    \left(\frac{\lambda n}{\beta^2} \sum_{l \in \gL_j} \vb_l^{\top} \vb_l + \frac{1}{\gamma} \I \right) \vv + \frac{\lambda\alpha}{\beta} \nabla G(\vv/\beta) = \eta_j \ve_1 + \frac{1}{\gamma} \vx.
\end{align}

Denote
\begin{align*}
\A = \frac{\lambda n}{\beta} \sum_{l \in \gL_j} \vb_l^{\top} \vb_l + \frac{\beta}{\gamma} \I, ~\vu = \frac{1}{\beta} \vv,  ~\vy = \eta_j \ve_1 + \frac{1}{\gamma} \vx, 
\end{align*}
then we have 
\begin{align}\label{eq:nonconvex:prox}
    \A \vu + \frac{\lambda \alpha}{\beta} \nabla G(\vu) = \vy.
\end{align}

Next, if $s$ satisfies
\begin{align}\label{s:condition}
\begin{cases}
s > \max \{1, k\} &\text{ for } j = 1, \\
s > k &\text{ for } j > 1,
\end{cases}
\end{align}
then we know that the $s$-th element of $\vy$ is $0$.

\vskip 10pt
If $s$ satisfies (\ref{s:condition}) and $m - s \in \gL_j$, then the $s$-th and $(s + 1)$-th elements of $\A \vu$ is 
$\left((\xi + \beta / \gamma) u_s - \xi u_{s+1}\right)$ and $\left(-\xi u_s + (\xi + \beta/\gamma) u_{s+1}\right)$ respectively where $\xi = \lambda n / \beta$.
So by Equation (\ref{eq:nonconvex:prox}), we have 
\begin{align*}
\begin{cases}
\frac{\beta}{\gamma} u_s + \xi (u_s - u_{s+1}) + \frac{120\lambda \alpha}{\beta} \frac{u_s^2(u_s-1)}{1+ u_s^2} = 0.  \\
\frac{\beta}{\gamma} u_{s+1} + \xi (u_{s+1} - u_s) + \frac{120\lambda \alpha}{\beta} \frac{u_{s+1}^2(u_{s+1}-1)}{1+ u_{s+1}^2} = 0.
\end{cases}
\end{align*}
Following from Lemma \ref{lem:solution:z1z2}, for $\frac{120\lambda \alpha}{\beta} < \frac{(2 + 2\sqrt{2})\beta}{\gamma}$, we have $u_s = u_{s+1} = 0$. \\
That is 
\begin{enumerate}
    \item if $m - s \in \gL_j$ and $s$ satisfies (\ref{s:condition}), then $u_s = 0$. 
    \item if $m - s + 1 \in \gL_j$ and $s - 1$ satisfies (\ref{s:condition}), then $u_s = 0$. 
\end{enumerate}
    
\vskip 10pt
For $s$ which satisfies (\ref{s:condition}), if $m - s \not\in \gL_j$ and $m - s + 1 \not\in \gL_j$, then the $s$-th element of $\A\vu$ is $(\beta/\gamma ~u_s)$.
Similarly, by Equation (\ref{eq:nonconvex:prox}), we have
\begin{align*}
    \frac{\beta}{\gamma} u_s + \frac{120\lambda \alpha}{\beta} \frac{u_s^2(u_s-1)}{1+ u_s^2} = 0.
\end{align*}
Following from Lemma \ref{lem:solution:z}, for $\frac{120\lambda \alpha}{\beta} < \frac{(2 + 2\sqrt{2})\beta}{\gamma}$, we have $u_s = 0$.

\vskip 10pt
Therefore, we can conclude that
\begin{enumerate}
    \item if $s - 1$ satisfies (\ref{s:condition}), then $u_s = 0$.
    \item if $s$ satisfies (\ref{s:condition}) and $m -s + 1 \not\in \gL_j$, then $u_s = 0$. 
\end{enumerate}

Moreover, we have that
\begin{enumerate}
    \item if $k = 0$ and $j = 1$, then $m - 1, m - 2 \not\in \gL_j$, so $u_2 = 0$.
    \item if $k = 0$ and $j > 1$, then for $s = 1$, we have $m - s + 1 \not\in \gL_j$, so $u_1 = 0$.
    \item if $k = 0$, then for $s > 2$, we have $s - 1 > 1$ satisfies (\ref{s:condition}), so $u_s = 0$.
    \item if $k > 0$, then for $s > k + 1$, we have $s - 1 > k$ satisfies (\ref{s:condition}), so $u_{s} = 0$. 
    \item if $k > 0$ and $m - k \not\in \gL_j$, then for $s = k + 1$, we have $m - s + 1 \not\in \gL_j$, so $u_{k+1} = 0$.
\end{enumerate}

In short, 
\begin{enumerate}
    \item if $k = 0$ and $j > 1$, then $\vu \in \gG_0$.
    \item if $k = 0$ and $j = 1$, then $\vu \in \gG_1$.
    \item if $k > 1$ and $m - k \not\in \gL_j$, then $\vu \in \gG_{k}$.
    \item if $k > 1$ and $m - k \in \gL_j$, then $\vu \in \gG_{k+1}$.
\end{enumerate}

\end{proof}

\begin{remark}
In order to make the operator $\prox_{f_i}^{\gamma}$ valid, $\gamma$ need to satisfy 
\[\gamma < \frac{\sqrt{3} + 1}{90} \frac{\beta^2}{\lambda \alpha} < \frac{\sqrt{2} + 1}{60} \frac{\beta^2}{\lambda \alpha}. \]
So for any valid PIFO call, the condition about $\gamma$ in Lemma \ref{lem:nonconvex:jump} must be satisfied.
\end{remark}

\begin{lemma}\label{lem:solution:z}
Suppose that $0 < \lambda_2 < (2 + 2\sqrt{2}) \lambda_1$, 
then $z = 0$ is the only real solution to the equation 
\begin{align}\label{eq:solution:z}
\lambda_1 z + \lambda_2 \frac{z^2(z-1)}{1+ z^2} = 0.    
\end{align}
\end{lemma}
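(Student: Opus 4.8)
The plan is to clear the denominator and reduce the equation to a polynomial whose nonzero roots are governed by a single quadratic. First I would observe that $1+z^2>0$ for all real $z$, so multiplying \eqref{eq:solution:z} by $1+z^2$ is reversible and introduces no spurious solutions; this yields $\lambda_1 z(1+z^2) + \lambda_2 z^2(z-1) = 0$. Factoring out $z$ gives
\[
z\big[(\lambda_1+\lambda_2)z^2 - \lambda_2 z + \lambda_1\big] = 0,
\]
so $z=0$ is always a solution, and any nonzero real solution must satisfy the quadratic $(\lambda_1+\lambda_2)z^2 - \lambda_2 z + \lambda_1 = 0$. Note the hypothesis $0<\lambda_2<(2+2\sqrt{2})\lambda_1$ forces $\lambda_1>0$, hence $\lambda_1+\lambda_2>0$ and this is a genuine (nondegenerate) quadratic.

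Next I would show this quadratic has no real root by checking its discriminant is negative. Compute $D = \lambda_2^2 - 4\lambda_1(\lambda_1+\lambda_2) = \lambda_2^2 - 4\lambda_1\lambda_2 - 4\lambda_1^2$. Treating $D$ as a quadratic in $\lambda_2$ with $\lambda_1>0$ fixed, its two roots are $\lambda_2 = (2\pm 2\sqrt{2})\lambda_1$, so $D<0$ exactly when $(2-2\sqrt{2})\lambda_1 < \lambda_2 < (2+2\sqrt{2})\lambda_1$. Since $\lambda_1>0$ we have $(2-2\sqrt{2})\lambda_1<0<\lambda_2$, so the single hypothesis $\lambda_2<(2+2\sqrt{2})\lambda_1$ already places $\lambda_2$ in this interval; therefore $D<0$, the quadratic has no real solution, and $z=0$ is the unique real solution of \eqref{eq:solution:z}.

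There is essentially no obstacle here: the argument is a short manipulation plus a discriminant sign check. The only points requiring a word of care are that multiplying by $1+z^2$ does not alter the real solution set (because $1+z^2$ never vanishes), and that the leading coefficient $\lambda_1+\lambda_2$ is nonzero so that the reduced equation really is quadratic. I expect the constant $2+2\sqrt{2}$ in the statement to be precisely the threshold at which $D$ changes sign, which is exactly what the computation above confirms; the companion Lemma \ref{lem:solution:z1z2} (the two-variable version used in the proof of Lemma \ref{lem:nonconvex:jump}) should follow from a similar but slightly more involved elimination, but for the present statement nothing more is needed.
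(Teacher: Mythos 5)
Your proof is correct and follows essentially the same route as the paper: clear the denominator, factor out $z$, and show the remaining quadratic $(\lambda_1+\lambda_2)z^2 - \lambda_2 z + \lambda_1$ has negative discriminant under the hypothesis. The only addition is your explicit derivation of the roots $(2\pm 2\sqrt{2})\lambda_1$ of $D$ in $\lambda_2$, which the paper leaves implicit.
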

\begin{proof}
Since $0 < \lambda_2 < (2 + 2\sqrt{2}) \lambda_1$, we have 
\[\lambda_2^2 - 4 \lambda_1 (\lambda_1 + \lambda_2) < 0,\]
and consequently, for any $z$, $(\lambda_1 + \lambda_2)z^2 - \lambda_2 z + \lambda_1 > 0$.

On the other hand, we can rewrite Equation (\ref{eq:solution:z}) as 
\begin{align*}
    z \big((\lambda_1 + \lambda_2)z^2 - \lambda_2 z + \lambda_1\big) = 0.
\end{align*}
Clearly, $z = 0$ is the only real solution to Equation (\ref{eq:solution:z}).

\end{proof}

\begin{lemma}\label{lem:solution:z1z2}
Suppose that $0 < \lambda_2 < (2 + 2\sqrt{2}) \lambda_1$ and $\lambda_3 > 0$, 
then $z_1 = z_2 = 0$ is the only real solution to the equation 
\begin{align}\label{eq:solution:z1z2}
\begin{cases}
\lambda_1 z_1 + \lambda_3 (z_1 - z_2) + \lambda_2 \frac{z_1^2(z_1-1)}{1+ z_1^2} = 0.  \\
\lambda_1 z_2 + \lambda_3 (z_2 - z_1) + \lambda_2 \frac{z_2^2(z_2-1)}{1+ z_2^2} = 0.
\end{cases}
\end{align}
\end{lemma}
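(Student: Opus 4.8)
The plan is to reduce the two‑variable system to the scalar situation already handled in Lemma \ref{lem:solution:z} by taking a suitable linear combination of the two equations. Write $\phi(z) = \lambda_1 z + \lambda_2 \frac{z^2(z-1)}{1+z^2}$, so that the system reads $\phi(z_1) + \lambda_3(z_1 - z_2) = 0$ and $\phi(z_2) + \lambda_3(z_2 - z_1) = 0$. First I would multiply the first equation by $z_1$, the second by $z_2$, and add them; the coupling terms combine as $\lambda_3 z_1(z_1-z_2) + \lambda_3 z_2(z_2 - z_1) = \lambda_3(z_1-z_2)^2$, giving
\begin{align*}
z_1\phi(z_1) + z_2\phi(z_2) + \lambda_3 (z_1 - z_2)^2 = 0.
\end{align*}

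Next I would show that each term on the left is nonnegative. Clearing denominators,
\begin{align*}
z\phi(z) = \frac{z^2}{1+z^2}\Big[(\lambda_1+\lambda_2)z^2 - \lambda_2 z + \lambda_1\Big],
\end{align*}
and the bracketed quadratic in $z$ is strictly positive for every real $z$, because its discriminant $\lambda_2^2 - 4\lambda_1(\lambda_1+\lambda_2)$ is negative under the hypothesis $0 < \lambda_2 < (2+2\sqrt{2})\lambda_1$ — exactly the computation carried out in the proof of Lemma \ref{lem:solution:z}. Hence $z\phi(z) \ge 0$ with equality if and only if $z = 0$. Since $\lambda_3 > 0$, all three summands in the displayed identity are nonnegative, so each must vanish; in particular $z_1\phi(z_1) = 0$ and $z_2\phi(z_2) = 0$ force $z_1 = z_2 = 0$. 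Conversely $(0,0)$ obviously solves the system, so it is the only real solution.

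There is essentially no hard step: once one notices that multiplying the $i$-th equation by $z_i$ turns the coupling term into the manifestly nonnegative quantity $\lambda_3(z_1-z_2)^2$, the remainder is the positivity argument already established for the scalar case. The only point requiring a little care is bookkeeping the discriminant condition to ensure the quadratic factor $(\lambda_1+\lambda_2)z^2 - \lambda_2 z + \lambda_1$ is positive for \emph{all} real $z$ rather than merely on some interval; this is precisely why the hypothesis appears as a two-sided bound on the ratio $\lambda_2/\lambda_1$ (and, in particular, forces $\lambda_1 > 0$).
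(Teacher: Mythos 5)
Your proof is correct, and it is genuinely different from the paper's. The paper argues by contradiction: assuming $z_1 z_2 \neq 0$, it rewrites the first equation as $\tfrac{z_2}{z_1} = \tfrac{\lambda_1+\lambda_3}{\lambda_3} + \tfrac{\lambda_2}{\lambda_3}\tfrac{z_1(z_1-1)}{1+z_1^2}$, invokes the pointwise lower bound $\tfrac{z(z-1)}{1+z^2} \ge \tfrac{1-\sqrt{2}}{2}$, and concludes that both $\tfrac{z_2}{z_1}$ and $\tfrac{z_1}{z_2}$ exceed $1$, which is impossible. Your argument instead takes the inner product of the system with $(z_1, z_2)$ to produce the sum-of-nonnegatives identity $z_1\phi(z_1) + z_2\phi(z_2) + \lambda_3(z_1-z_2)^2 = 0$, and then uses the same discriminant computation already performed in Lemma \ref{lem:solution:z} to see that each $z_i\phi(z_i)$ is nonnegative and vanishes only at $z_i = 0$. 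The payoff of your route is that it avoids the separate $z_1 = 0$ case split and the calculus needed to establish the minimum $\tfrac{1-\sqrt{2}}{2}$, reusing the scalar lemma's positivity directly; it also generalizes more transparently (e.g., to longer chains with the same coupling structure, since the coupling terms always telescope into a nonnegative quadratic form). The paper's route is self-contained at the level of this lemma but requires the extra minimization of $\tfrac{z(z-1)}{1+z^2}$. Both are sound.
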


\begin{proof}
If $z_1 = 0$, then $z_2 = 0$. So let assume that $z_1 z_2 \neq 0$.
Rewrite the first equation of Equation (\ref{eq:solution:z1z2}) as
\begin{align*}
    \frac{\lambda_1 + \lambda_3}{\lambda_3} + \frac{\lambda_2}{\lambda_3} \frac{z_1(z_1-1)}{1+ z_1^2} = \frac{z_2}{z_1}
\end{align*}
Note that 
\begin{align*}
    \frac{1 - \sqrt{2}}{2} \le \frac{z(z-1)}{1+z^2}.
\end{align*}
Thus, we have
\begin{align*}
    \frac{\lambda_1 + \lambda_3}{\lambda_3} + \frac{\lambda_2}{\lambda_3} \frac{1 - \sqrt{2}}{2} 
    \le \frac{z_2}{z_1}.
\end{align*}
Similarly, it also holds
\begin{align*}
    \frac{\lambda_1 + \lambda_3}{\lambda_3} + \frac{\lambda_2}{\lambda_3} \frac{1 - \sqrt{2}}{2} 
    \le \frac{z_1}{z_2}.
\end{align*}

By $0 < \lambda_2 < (2 + 2\sqrt{2}) \lambda_1$, we know that $\lambda_1 + \frac{1 - \sqrt{2}}{2}\lambda_2 > 0$. 
Thus
\begin{align*}
    \frac{\lambda_1 + \lambda_3}{\lambda_3} + \frac{\lambda_2}{\lambda_3} \frac{1 - \sqrt{2}}{2} > 1.
\end{align*}
Since $z_1/z_2 > 1$ and $z_2 / z_1 > 1$ can not hold at the same time, so we get a contradiction.
\end{proof}

Following from Lemma \ref{lem:nonconvex:jump}, we know following Lemma which is similar to Lemma \ref{lem:base}.
\begin{lemma}\label{lem:nonconvex:base}
If $M \ge 1$ satisfies $\min_{\vx \in \gG_M} \norm{\nabla F(\vx)} \ge 9\eps$ and $N = n(M+1)/4$, 
then we have
\begin{align*}
    \min_{t \le N} \E \norm{\nabla F(\vx_t)} \ge \eps.
\end{align*}
\end{lemma}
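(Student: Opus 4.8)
The plan is to follow the proof of Lemma~\ref{lem:base} almost verbatim, replacing the subspaces $\gF_k$ by $\gG_k$, the function $f$ by $F$, the suboptimality gap by the gradient norm $\norm{\nabla F(\cdot)}$, and Lemma~\ref{lem:jump} by its non-convex counterpart Lemma~\ref{lem:nonconvex:jump}. First I would record the analogue of Corollary~\ref{coro:stopping-time} for the present construction: set $T_0 = 0$ and $T_k = \min\{t : t > T_{k-1},\ i_t \equiv k \pmod{n}\}$ for $k \ge 1$. An induction on $t$ using Lemma~\ref{lem:nonconvex:jump} (applicable because every admissible PIFO step size $\gamma$ satisfies the bound required there, as noted in the remark following that lemma) shows that the span of all queried gradients and prox points up to time $t$ stays inside $\gG_{k-1}$ whenever $t < T_k$; in particular $\vx_t \in \gG_{k-1}$ for $t < T_k$. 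Moreover $T_k = \sum_{l=1}^k Y_l$, where the $Y_l$ are independent and $Y_l \sim \geo(q_l)$ with $q_l = p_{l'}$, $l' \equiv l \pmod{n}$, $1 \le l' \le n$ --- by exactly the same computation of the gap $T_k - T_{k-1}$ as in the proof of Corollary~\ref{coro:stopping-time}.

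Next, fix $t \le N = n(M+1)/4$ and condition on the event $\{N < T_{M+1}\}$. On this event $t \le N < T_{M+1}$, so by the stopping-time corollary $\vx_t \in \gG_M$, hence $\norm{\nabla F(\vx_t)} \ge \min_{\vx \in \gG_M}\norm{\nabla F(\vx)} \ge 9\eps$. Taking expectations and using $\norm{\nabla F(\vx_t)} \ge 0$,
\begin{align*}
\E \norm{\nabla F(\vx_t)} \ge \E\big[\norm{\nabla F(\vx_t)} \,\big|\, N < T_{M+1}\big]\,\pr{N < T_{M+1}} \ge 9\eps\,\pr{T_{M+1} > N}.
\end{align*}
It remains to lower bound $\pr{T_{M+1} > N}$. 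Since $p_1 \le \cdots \le p_n$, we have $\sum_{l=1}^{M+1} q_l \le (M+1)/n$, so $N = n(M+1)/4 \le (M+1)^2 / \big(4\sum_{l=1}^{M+1} q_l\big)$, and Lemma~\ref{lem:geo} applied to $Y_1,\dots,Y_{M+1}$ gives $\pr{T_{M+1} > N} \ge 1 - \tfrac{16}{9(M+1)} \ge \tfrac19$, where the last inequality uses $M \ge 1$. Combining the two displays yields $\E\norm{\nabla F(\vx_t)} \ge \eps$ for every $t \le N$, and therefore $\min_{t \le N}\E\norm{\nabla F(\vx_t)} \ge \eps$.

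The only genuinely new ingredient compared with Lemma~\ref{lem:base} is the stopping-time corollary, and within it the fact that $\prox_{f_i}^\gamma(\vx)$ does not acquire components outside the expected subspace despite the nonlinearity coming from $\Gamma'$; this is precisely what Lemma~\ref{lem:nonconvex:jump} provides, via Lemmas~\ref{lem:solution:z} and~\ref{lem:solution:z1z2} which force the relevant coordinates of the prox point to vanish. I expect this verification to be the main (already-discharged) obstacle; everything else is a routine transcription of the convex argument. One minor bookkeeping point is that one needs $M+1 \le m$ so that $\gG_M$ is a proper subspace and the induction in the corollary runs through step $M+1$ (Lemma~\ref{lem:nonconvex:jump} being stated for $k < m$); this is ensured by the choice of the dimension $m$ in Theorem~\ref{thm:nonconvex}.
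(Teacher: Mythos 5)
Your proposal is correct and coincides with the paper's own (only sketched) argument: the paper proves Lemma~\ref{lem:nonconvex:base} precisely by transcribing the proof of Lemma~\ref{lem:base}, with Lemma~\ref{lem:nonconvex:jump} supplying the subspace-containment step and Corollary~\ref{coro:stopping-time}/Lemma~\ref{lem:geo} reused unchanged. Your handling of the admissible-$\gamma$ condition and the $M+1\le m$ bookkeeping matches the paper's remarks, so nothing is missing.
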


\begin{thm}
Set 
\begin{align*}
    \alpha &= \min \left\{1, \frac{(\sqrt{3} + 1)n\sigma}{30 L}, \frac{n}{180} \right\}, \\
    \lambda &= \frac{3888 n \eps^2}{L \alpha^{3/2}}, \\
    \beta &= \sqrt{3 \lambda n / L}, \\
    m &= \floor{\frac{\Delta L \sqrt{\alpha}}{40824 n \eps^2}}
\end{align*}
Suppose that $\eps^2 \le \frac{\Delta L \alpha}{81648 n}$. In order to find $\hat{\vx} \in \sR^{m+1}$ such that $\E \norm{\nabla F(\hat{\vx})} < \eps$, PIFO algorithm $\gA$ needs at least $\Omega \left(\frac{\Delta L \sqrt{\alpha}}{\eps^2}\right)$ queries to $h_{F}$.
\end{thm}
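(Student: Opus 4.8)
The plan is to instantiate the non-convex machinery of Appendix~\ref{sec:nonconvex} --- the decomposition $F(\vx;\alpha,m,\lambda,\beta)=\tfrac1n\sum_{i=1}^n f_i(\vx;\alpha,m,\lambda,\beta)$ together with Lemma~\ref{lem:nonconvex:jump} and its consequence Lemma~\ref{lem:nonconvex:base} --- with the parameters fixed exactly as in the statement. Concretely I would apply Lemma~\ref{lem:nonconvex:base} with $M=m-1$ and $N=n(M+1)/4=nm/4$, which requires verifying two hypotheses: that $M\ge1$, and that $\min_{\vx\in\gG_M}\norm{\nabla F(\vx)}\ge9\eps$. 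Granting both, Lemma~\ref{lem:nonconvex:base} gives $\min_{t\le N}\E\norm{\nabla F(\vx_t)}\ge\eps$, so no PIFO algorithm can output an $\eps$-stationary point within $N$ queries; a short arithmetic step then turns $N=nm/4$ into $\Omega(\Delta L\sqrt\alpha/\eps^2)$.

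The gradient lower bound is the only nontrivial point. Every $\vx\in\gG_{m-1}=\spn\{\ve_1,\dots,\ve_{m-1}\}$ has $x_m=x_{m+1}=0$, so part~3 of the Proposition preceding Lemma~\ref{lem:nonconvex:jump} yields $\norm{\nabla F(\vx)}\ge\alpha^{3/4}\lambda/(4\beta)$ for every such $\vx$. Substituting $\beta=\sqrt{3\lambda n/L}$ and $\lambda=3888 n\eps^2/(L\alpha^{3/2})$, a one-line computation gives $\alpha^{3/4}\lambda/(4\beta)=\tfrac14\alpha^{3/4}\sqrt{\lambda L/(3n)}=\tfrac14\cdot36\,\eps=9\eps$ exactly; this identity is precisely how the constant $3888$ was chosen. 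To get $M=m-1\ge1$ I would use $\eps^2\le\Delta L\alpha/(81648 n)$, which forces $\Delta L\sqrt\alpha/(40824 n\eps^2)\ge 2/\sqrt\alpha\ge2$ (as $\alpha\le1$), hence $m=\lfloor\Delta L\sqrt\alpha/(40824 n\eps^2)\rfloor\ge2$.

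Along the way I would also record the two facts needed when this theorem is plugged into Theorem~\ref{thm:nonconvex}: that each $f_i$ is genuinely $(-\sigma,L)$-smooth and that $F(\vx_0)-\min F\le\Delta$. Part~1 of the same Proposition makes $f_i$ be $\bigl(-45(\sqrt3-1)\alpha\lambda/\beta^2,\ (2n+180\alpha)\lambda/\beta^2\bigr)$-smooth; with $\beta^2=3\lambda n/L$ the upper coefficient equals $\bigl(\tfrac23+\tfrac{60\alpha}{n}\bigr)L\le L$ since $\alpha\le n/180$, and the lower coefficient equals $15(\sqrt3-1)\alpha L/n\le\sigma$ since $\alpha\le(\sqrt3+1)n\sigma/(30L)$ and $15(\sqrt3-1)(\sqrt3+1)/30=1$. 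Part~2 bounds the initial gap by $\lambda(\sqrt\alpha/2+10\alpha m)$; with the chosen $\lambda$, the term $10\alpha\lambda m\le\tfrac{20}{21}\Delta$ from $m\le\Delta L\sqrt\alpha/(40824 n\eps^2)$ and the term $\sqrt\alpha\lambda/2\le\tfrac1{42}\Delta$ from $\eps^2\le\Delta L\alpha/(81648 n)$, so the gap is at most $\tfrac{41}{42}\Delta<\Delta$; this is where $40824$ and $81648$ come from.

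Finally, combining everything: Lemma~\ref{lem:nonconvex:base} gives $\min_{t\le N}\E\norm{\nabla F(\vx_t)}\ge\eps$ with $N=nm/4$, and since $m\ge\Delta L\sqrt\alpha/(40824 n\eps^2)-1\ge\tfrac12\Delta L\sqrt\alpha/(40824 n\eps^2)$ (using $m\ge2$ once more) we obtain $N\ge\Delta L\sqrt\alpha/(326592\,\eps^2)=\Omega(\Delta L\sqrt\alpha/\eps^2)$, which is the claim. I do not expect a genuine obstacle: the heavy lifting --- the ``only one component can enlarge the active subspace'', including for the proximal oracle --- is already carried out in Lemma~\ref{lem:nonconvex:jump}, and what remains is the constant-chasing above together with the easy check that every admissible step size $\gamma$ obeys the bound under which Lemma~\ref{lem:nonconvex:jump} is valid, which is guaranteed by the feasibility constraint on $\prox_{f_i}^\gamma$ recorded in the remark right after that lemma.
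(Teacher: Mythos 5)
Your proposal is correct and follows essentially the same route as the paper's own proof: invoke Lemma~\ref{lem:nonconvex:base} with $M=m-1$, use the third part of the preceding proposition together with the identity $\alpha^{3/4}\lambda/(4\beta)=9\eps$ to certify the gradient lower bound on $\gG_{m-1}$, and check the $(-\sigma,L)$-smoothness, the $\Delta$-bound on the initial gap, and $m\ge2$ by the same constant chasing. Your bound $\frac{41}{42}\Delta$ for the initial gap is marginally tighter than the paper's, but this is only a cosmetic difference in the same argument.
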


\begin{proof}
First, note that $f_i$ is $(-l_1, l_2)$-smooth, where
\begin{align*}
    l_1 &= \frac{45 (\sqrt{3} - 1) \alpha\lambda}{\beta^2} = \frac{45 (\sqrt{3} - 1)L}{3n} \alpha 
    \le \frac{45 (\sqrt{3} - 1) L}{3n} \frac{(\sqrt{3} + 1)n\sigma}{30 L} = \sigma, \\
    l_2 &= \frac{(2n + 180 \alpha)\lambda}{\beta^2} = \frac{L}{3n} (2n + 180 \alpha) \le L.
\end{align*}
Thus each $f_i$ is $(-\sigma, L)$-smooth.

Next, observe that 
\begin{align*}
    F(\vx_0) - F(\vx^*) &\le \lambda (\sqrt{\alpha}/2 + 10 \alpha m) = \frac{1944 n \eps^2}{L \alpha} + \frac{38880 n \eps^2}{L \sqrt{\alpha}} m \\
    &\le \frac{1944}{40824} \Delta + \frac{38880}{40824} \Delta = \Delta.
\end{align*}

For $M = m - 1$, we know that 
\begin{align*}
    \min_{\vx \in \gG_M} \norm{\nabla F(\vx)} \ge \frac{\alpha^{3/4}\lambda}{4\beta} 
    = \frac{\alpha^{3/4}\lambda}{4\sqrt{3\lambda n/L}} = \sqrt{\frac{\lambda L}{3n}} \frac{\alpha^{3/4}}{4} = 9 \eps.
\end{align*}

With recalling Lemma \ref{lem:nonconvex:base}, in order to find $\hat{\vx} \in \sR^{m+1}$ such that $\E \norm{\nabla F(\hat{\vx})} < \eps$, PIFO algorithm $\gA$ needs at least $N$ queries to $h_{F}$, where
\begin{align*}
    N &= n(M + 1)/4 = n m /4 = \Omega \left(\frac{\Delta L \sqrt{\alpha}}{\eps^2}\right).
\end{align*}

At last, we need to ensure that $m \ge 2$. By $\eps^2 \le \frac{\Delta L \alpha}{81648 n}$, we have
\begin{align*}
    \frac{\Delta L \sqrt{\alpha}}{40824 n \eps^2} \ge \frac{\Delta L \alpha}{40824 n \eps^2} \ge 2,
\end{align*}
and consequently $m \ge 2$.
\end{proof}

\end{document}